\documentclass[12pt,a4paper]{article}
\usepackage[a4paper,margin=3cm]{geometry}
\usepackage{amsmath,amsfonts,amsthm,amssymb,dsfont,lmodern,mathrsfs}
\usepackage{comment}
\usepackage{xfrac}
\usepackage{natbib}
\usepackage[utf8]{inputenc}
\usepackage[T1]{fontenc}
\usepackage[shortlabels]{enumitem}

\newlist{mathlist}{enumerate}{5}
\setlist[mathlist]{label={\textup{(\roman*)}}}

\usepackage[dvipsnames]{xcolor}
\usepackage[colorlinks,linkcolor=blue,citecolor=blue,urlcolor=blue]{hyperref}
\usepackage{doi}
\usepackage{tikz}
\usepackage[margin=1cm]{caption}
\usepackage{cleveref}

\usepackage{parskip}
\begingroup
\makeatletter
\@for\theoremstyle:=definition,remark,plain\do{%
  \expandafter\g@addto@macro\csname th@\theoremstyle\endcsname{%
    \addtolength\thm@preskip\parskip }%
}
\endgroup

\usepackage{titlesec}
\titleformat{\paragraph}[hang]{\bfseries}{}{0mm}{}
\titlespacing{\paragraph}{0mm}{\baselineskip}{0.5ex}

\theoremstyle{plain}
\newtheorem{theorem}{Theorem}[section]
\newtheorem{lemma}[theorem]{Lemma}
\newtheorem{corollary}[theorem]{Corollary}
\newtheorem{proposition}[theorem]{Proposition}

\newtheorem{definition}[theorem]{Definition}
\newtheorem{assumption}[theorem]{Assumption}
\Crefname{assumption}{Assumption}{Assumptions}
\theoremstyle{definition}

\newtheorem{remark}[theorem]{Remark}

\newcommand{\rmd}{\mathrm{d}}
\newcommand{\1}{\mathds{1}}
\renewcommand{\P}{\mathrm{P}}

\newcommand{\norm}[2][]{#1\lVert #2 #1\rVert}
\newcommand{\abs}[2][]{#1\lvert #2 #1\rvert}

\renewcommand{\epsilon}{\varepsilon}
\renewcommand{\rho}{\varrho}
\renewcommand{\phi}{\varphi}
\renewcommand{\emptyset}{\varnothing}
\renewcommand{\leq}{\leqslant}
\renewcommand{\le}{\leqslant}
\renewcommand{\geq}{\geqslant}
\renewcommand{\ge}{\geqslant}

\begin{document}

\title{A functional central limit theorem for kernel gradient flow and infinitesimal gradient boosting}
\author{Clément Dombry\footnote{Universit{\'e} Marie et Louis Pasteur, CNRS, LmB (UMR 6623), F-25000 Besan\c{c}on, France. Email: \url{clement.dombry@umlp.fr},\ \url{jean-jil.duchamps@umlp.fr}} \ and Jean-Jil Duchamps\footnotemark[1]}
\maketitle

\begin{abstract}
Building on the large-sample analysis of infinitesimal gradient boosting \citep{DD24}, we study the fluctuations of the process around its deterministic limit and establish a functional central limit
theorem: the rescaled deviations converge in distribution to a Gaussian process. The analysis is carried out in a
reproducing kernel Hilbert space (RKHS) naturally associated with the softmax
gradient tree base learner, in which the boosting process is characterized as the solution of an autonomous ordinary differential equation (ODE). 

The proof rests on a general stochastic perturbation analysis of ODEs
in Banach spaces, which is of independent interest: whenever a sequence of
vector fields converges and satisfies a central limit theorem, so does the
associated ODE solution.  We first illustrate this perturbation approach  in the simpler
setting of kernel gradient flow, where the Gaussian limit admits an explicit
characterization, and then consider the more complicated tree-based gradient boosting setting.
\end{abstract}

\textbf{Keywords:} gradient boosting, softmax gradient tree, central limit theorem, functional convergence.

\textbf{MSC 2020 subject classifications:} primary 62G05; secondary 60F15, 60J25.

\tableofcontents

\section{Introduction}\label{sec:introduction}

\subsection{Motivations and related works}
\label{subsec:motivations}

Tree-based gradient boosting, introduced by \citet{F01}, is one of the most
successful algorithms in modern machine learning. Starting from an initial predictor, it
builds a sequence of predictors by repeatedly fitting a regression tree to the
pseudo-residuals of the current model and adding it with a small shrinkage factor,
called the learning rate~$\lambda > 0$. This procedure can be interpreted as a form of
functional gradient descent in an infinite-dimensional space of predictors: each
regression tree approximates the negative gradient of the empirical loss, and the
resulting sequence follows a path along which the training error decreases. Modern
implementations such as XGBoost~\citep{CG16} and LightGBM~\citep{KMFWCMYL17}
are widely used and consistently achieve state-of-the-art performance on tabular data.

Despite this empirical success, a rigorous mathematical understanding of gradient
boosting remains incomplete. A large part of the statistical literature has focused on
\emph{consistency}, namely whether boosting procedures can achieve near-optimal
prediction error as the sample size $n$ tends to infinity. Positive answers were
obtained for AdaBoost by \citet{J04} and for more general boosting procedures by
\citet{LV04}, \citet{BLV04}, \citet{B04} and \citet{ZY05}. These analyses typically rely on
approximating the Bayes predictor by linear combinations of simple functions such as
decision trees, together with suitable complexity controls. 

In contrast, results describing the \emph{time dynamics} of gradient boosting -- that is,
the evolution of the predictor as the number of iterations increases -- are much more
limited. A notable exception is the work of \citet{BY03} on linear $L^2$-boosting,
where a spectral analysis of the associated linear operator allows for a precise
description of the trajectory and yields consistency as a consequence. This framework of linear $L^2$-boosting was further analyzed by
\citet{dombry-esstafa-2024} in the vanishing-learning-rate regime.

A complementary perspective was recently given in \citet{DD24a}, who
introduced \emph{infinitesimal gradient boosting} as the limit of
tree-based gradient boosting in the vanishing-learning-rate regime.
More precisely, when $\lambda \to 0$ and the number of iterations is
rescaled as $k = \lfloor t/\lambda \rfloor$, the boosting sequence
converges to a deterministic continuous-time process. This limit is
characterized as the unique solution of a nonlinear ordinary
differential equation (ODE) in an infinite-dimensional function space,
driven by the \emph{infinitesimal gradient boosting operator}
$\mathcal{T}_n$, whose index $n$ indicates its dependence on the data.
A key technical ingredient is the introduction of \emph{softmax
  regression trees}, which replace the discontinuous hardmax selection
of classical regression trees with a smoother softmax selection. This
ensures Lipschitz continuity of the expected tree with respect to the
training sample, which is key in the analysis of the ODE associated to
infinitesimal gradient boosting. The resulting trajectory is
deterministic (conditional on the sample) and has non-increasing
training error.

The large-sample behavior of infinitesimal gradient boosting was
established in \citet{DD24}. In this work, the finite
sample operator $\mathcal{T}_n$ is shown to converge almost surely to
a deterministic \emph{population operator} $\mathcal{T}$, which
defines a corresponding population gradient boosting flow depending
only on the distribution $\mathrm{P}$ of the data $(X,Y)$. This result
can be viewed as a functional law of large numbers for the entire
infinitesimal gradient boosting process. In addition, qualitative
properties of the limit are derived, including the monotonicity of the
population test error.

More recently, \citet{DDD25} introduced a reproducing kernel Hilbert
space (RKHS) framework for tree-based ensemble methods. They showed that softmax
regression trees induce a natural family of data-dependent kernels. The associated RKHSs provides a convenient
functional setting for infinitesimal gradient boosting since the ODE driving infinitesimal gradient boosting can be interpreted as a gradient flow in a Hilbert manifold. This RKHS perspective will be central in the present work.

While \citet{DD24} identifies the deterministic limit of infinitesimal gradient boosting, it
does not describe the \emph{fluctuations} around this limit. Such second-order results,
in the form of central limit theorems (CLTs), are fundamental in statistics: they
quantify uncertainty, enable the construction of confidence bands, and support
hypothesis testing. Although CLTs are well understood for many classical estimators
(see, e.g., \citealp{vdV98}), analogous results for gradient boosting remain largely unexplored. The main contribution of this paper is to fill this gap by establishing a
\emph{functional central limit theorem} for infinitesimal gradient boosting:
\begin{equation*}
  \sqrt{n}\,(F^n_t - F_t)_{t \geq 0}
  \xrightarrow{d} (\mathcal{F}_t)_{t \geq 0},
\end{equation*}
where the convergence in distribution is for differentiable processes
taking values in a suitable RKHS $\widetilde{\mathcal{H}}$. The limit
is a centered continuous Gaussian process characterized as the
solution of a linear stochastic ODE driven by the asymptotic noise of
the empirical process $\sqrt{n}(P_n - P)$.

The proof relies on a general framework for perturbations of autonomous ODEs in Banach
spaces, developed in Section~\ref{sec:ODE}. We show that if the driving vector
field converges at rate $\sqrt{n}$ to a Gaussian limit and satisfies appropriate
smoothness conditions, then the solution of the ODE inherits Gaussian fluctuations,
governed by a linearized equation. This result can be viewed as an infinite-dimensional
functional delta method for ODEs. Applying this framework to infinitesimal gradient boosting requires tools from
empirical process theory, in particular Donsker-type results \citep{vW96}. The RKHS
structure introduced in \citet{DDD25} plays a crucial role in identifying suitable function
classes and controlling the fluctuations of the boosting operator.

\subsection{Outline of the paper}
\label{sec:outline}

The paper develops three layers of results, each building on the previous one. We first
establish a general perturbation theory for ODEs in Banach spaces
(Section~\ref{sec:ODE}). We then derive a functional CLT for kernel gradient flows
(Section~\ref{sec:CLT-KGF}). Finally, we obtain the main functional CLT for
infinitesimal gradient boosting (Section~\ref{sec:CLT-IGB}).

In Section~\ref{sec:ODE}, we develop a general stability theory for autonomous ODEs in a
Banach space $E$. Given a vector field $g: E \to E$, we study the sensitivity of the
solution to perturbations of both the initial condition and the vector field.
\Cref{prop:first-order-perturbation} establishes continuity of the
solution map, while a second-order result (\Cref{prop:second-order-perturbation})
shows that $\sqrt{n}$-scaled perturbations lead to a Gaussian limit governed by a
linearized ODE. A stochastic version yields both a functional law of large numbers and
a functional CLT for random dynamical systems. We do not claim strong
originality for these results in the deterministic setting; however,
we were unable to find statements and their stochastic counterparts
specifically suited to our framework in the literature.

In Section~\ref{sec:CLT-KGF}, we apply this theory to gradient flows
in an RKHS $\mathcal{H}$. We prove almost sure convergence of the
empirical flow to its population counterpart, together with a
functional CLT describing the Gaussian fluctuations. In the
least-squares case, the theory greatly simplifies and the limit solves
a linear ODE in $\mathcal{H}$.

In Section~\ref{sec:CLT-IGB}, we turn to infinitesimal gradient
boosting. We first recover the functional law of large numbers from
\citet{DD24} within the RKHS framework from \citet{DDD25}. Our main result,
\Cref{thm:clt-igb}, is then a functional CLT for the infinitesimal
gradient boosting process, obtained by combining the general
perturbation theory with a CLT for the infinitesimal gradient boosting
operator. This requires working in an enlarged RKHS
$\widetilde{\mathcal{H}}$ to control singular terms arising in the
linearization. The key arguments for the proof rely on empirical
process techniques and Donsker properties of carefully constructed
function classes as well as on technical computations for deriving
various asymptotic expansions.

All main proofs are collected in Section~\ref{sec:proofs}, with
supplementary technical material provided in the appendices.

\subsection{Framework and notation}\label{sec:framework}
\paragraph{Statistical learning framework}
We consider a sample $(X_i,Y_i)_{1\leq i\leq n}$ of $n$ independent copies of a feature/label pair $(X,Y)$ taking values in $\mathcal{X}\times\mathcal{Y}$ with distribution $\P$. Here $\mathcal{X}$ and $\mathcal{Y}$ denote metric spaces that are usually subsets of $\mathbb{R}^p$ and $\mathbb{R}$ respectively. The empirical measure associated with the sample $(X_i,Y_i)_{1\leq i\leq n}$ is denoted
\[
\P_n=\frac{1}{n}\sum_{i=1}^n \delta_{(X_i,Y_i)},
\]
with $\delta_{(x,y)}$ the Dirac mass at $(x,y)$. For a measurable $f:\mathcal{X}\times\mathcal{Y}\to\mathbb{R}$, we write
\[
\P_n[f] = \P_n[f(x,y)] = \int f(x,y)\,\P_n(dx,dy)
= \frac{1}{n}\sum_{i=1}^n f(X_i,Y_i),
\]
and, for measurable sets $A\subset \mathcal{X}$,
\[
\P_n(A) = \frac{1}{n}\sum_{i=1}^n \1_A(X_i).
\]
Note the slight abuse of notation, since we should write formally $\P_n(A\times\mathcal{Y})$. Analogous notation is used for expectations under the population distribution $\P$.

Given a measurable loss function $\ell:\mathbb{R}\times\mathcal{Y}\to [0,+\infty)$, the goal is to estimate the minimizer
\begin{equation}\label{eq:Fstar}
F^*=\mathop{\mathrm{argmin}}_{F\in\mathcal{F}} \mathrm{P}[\ell(F(x),y)],
\end{equation}
where $\mathcal{F}$ denotes the set of measurable functions $F: \mathcal{X}\to\mathbb{R}$. The \emph{theoretical risk}
\begin{equation}\label{eq:population-risk}
\mathcal{L}(F)=\P[\ell(F(x),y)] 
\end{equation}
is not observable and is approximated by its empirical counterpart, the \emph{empirical risk}
\begin{equation}\label{eq:empirical-risk}
\mathcal{L}_n(F)=\P_n[\ell(F(x),y)] .
\end{equation}

The choice of the loss function depends on the statistical task and the most common settings are:
\begin{itemize}
    \item \emph{Least square regression} with $\mathcal{Y}=\mathbb{R}$ and square loss $\ell(z,y)=\frac{1}{2}(y-z)^2$. The theoretical risk is the (half) mean squared error and is minimized by the regression function $F^*(x)=\mathbb{E}[Y\mid X=x]$.
    \item \emph{Binary classification} with $\mathcal{Y}=\{0,1\}$ and 
    $\ell(z,y)= -\,yz +\log(1+e^z)$ the binary cross-entropy loss. The optimal function is  $F^*(x)=\log(p^*(x)/(1-p^*(x))$, the logit of the success probability $p^*(x)=\mathbb{P}(Y=1\mid X=x)$.
\end{itemize}

\smallskip
We assume that the loss is twice continuously differentiable with respect to its first variable and denote
\[
\partial_1\ell(z,y)=\frac{\partial}{\partial z}\ell(z,y),
\qquad
\partial_1^2\ell(z,y)=\frac{\partial^2}{\partial z^2}\ell(z,y).
\]
We always work under the following assumption on the loss function $\ell$.
\begin{assumption}\label{ass:loss}
  For any fixed $y\in \mathcal{Y}$, the function $z\mapsto \ell(z,y)$
  is strictly convex and twice continuously differentiable. Its
  second-order derivative is uniformly bounded:
\[
L := \sup_{z\in \mathbb{R},\,y\in\mathcal{Y}} \partial_1^2 \ell(z,y) < \infty.
\]
Moreover, the function $(z,y)\mapsto \partial_1\ell(z, y)$ is
uniformly continuous, and the following integrability conditions hold:
\[
\P[|\ell(0,y)|]<\infty \quad \mbox{and} \quad \P[ |\partial_1\ell(0,y)|^2] < \infty,
\]
as well as the following coerciveness condition:
\[
  \lim_{\abs{z}\to \infty} \mathrm{P}[\ell(z, y)] = +\infty.
\]
\end{assumption}

Clearly, for least square regression,  Assumption~\ref{ass:loss} reduces to $\P[y^2]<\infty$. For  classification with binary cross-entropy, the integrability assumption is always satisfied since $\partial_1\ell(0,y)\equiv 0$.

\paragraph{Reproducing Kernel Hilbert Spaces}

In this paper, we focus on predictors $F:\mathcal{X}\to\mathbb{R}$ belonging to a reproducing kernel Hilbert space (RKHS). We recall below several basic definitions from RKHS theory.

A \emph{kernel} on $\mathcal{X}$ is a function $k:\mathcal{X}\times\mathcal{X}\to\mathbb{R}$ that is symmetric and positive semidefinite, meaning that:
\begin{itemize}
    \item $k(x,x')=k(x',x)$ for all $x,x'\in\mathcal{X}$;
    \item $\sum_{l,l'=1}^L a_l a_{l'}\, k(x_l,x_{l'}) \ge 0$ for all $L\ge 1$,  $a_1,\ldots,a_L\in\mathbb{R}$, and  $x_1,\ldots,x_L\in\mathcal{X}$.
\end{itemize}
When $\mathcal{X}$ is a measurable space, the kernel is said to be \emph{measurable} if $k$ is measurable on $\mathcal{X}\times\mathcal{X}$ with respect to the product $\sigma$-field.

An RKHS on $\mathcal{X}$ is a subspace $\mathcal{H}$ of the space of functions $ \{F:\mathcal{X}\to\mathbb{R}\}$  equipped with a Hilbert structure such that the evaluation maps $F \mapsto F(x)$ are continuous for all $x\in\mathcal{X}$. We denote its inner product and norm by $\langle\cdot,\cdot\rangle_{\mathcal{H}}$ and $\norm{\cdot}_{\mathcal{H}}$, respectively. By the Riesz representation theorem, for every $x\in\mathcal{X}$ there exists a unique element $k_x\in\mathcal{H}$ such that
\[
\langle k_x , F \rangle_{\mathcal{H}} = F(x),\qquad F\in\mathcal{H}.
\]
The map $\Phi:\mathcal{X}\to\mathcal{H},\ x\mapsto k_x$ is called the \emph{feature map} of the RKHS. 
The function $k:\mathcal{X}\times \mathcal{X}\to\mathcal{R}$ defined by
\[
k(x,x')=\langle k_x, k_{x'}\rangle_{\mathcal{H}}=\langle \Phi(x), \Phi(x')\rangle_{\mathcal{H}}
\]
is a kernel on $\mathcal{X}$, called the \emph{reproducing kernel} of $\mathcal{H}$. It uniquely characterizes $\mathcal{H}$. Conversely, Aronszajn's theorem~\citep{A50} ensures that any kernel gives rise to a unique RKHS for which it is the reproducing kernel; see also \citet[Theorem~2.7.7]{HE15}.

Throughout this paper, any RKHS that we use will satisfy the following
regularity assumption.
\begin{assumption}\label{ass:kernel} 
  The space $\mathcal{X}$ is a separable metric space and the kernel
  $k:\mathcal{X}\times\mathcal{X}\to\mathbb{R}$ of the RKHS
  $\mathcal{H}$ is continuous and bounded, with
\[
  \kappa^2 := \sup_{x\in\mathcal{X}} k(x,x)< \infty.
\]
\end{assumption}

Under Assumption~\ref{ass:kernel}, every function $F\in\mathcal{H}$ is
continuous and bounded and we have
\begin{equation}\label{eq:bound-uniform-norm}
\|F\|_{\infty}
  := \sup_{x\in\mathcal{X}} |F(x)|
  \le \kappa\,\|F\|_{\mathcal{H}}.
\end{equation}
Indeed, using the reproducing property
$F(x)=\langle F, \Phi(x)\rangle_{\mathcal{H}}$ and
$\|\Phi(x)\|_{\mathcal{H}} = \sqrt{k(x,x)}$, the Cauchy--Schwarz inequality
implies
\[
|F(x)|
  = |\langle F, \Phi(x)\rangle_{\mathcal{H}}|
  \le \|F\|_{\mathcal{H}}\,\|\Phi(x)\|_{\mathcal{H}}
  = \|F\|_{\mathcal{H}}\sqrt{k(x,x)}.
\]
Taking the supremum over $x\in\mathcal{X}$ yields~\eqref{eq:bound-uniform-norm}.

Similarly, the continuity of functions in the RKHS follows from the continuity of $k$ thanks to the inequality
\begin{align}
|F(x)-F(x')|&=|\langle F, \Phi(x)-\Phi(x')\rangle_{\mathcal{H}}|\nonumber\\
&\leq \|F\|_{\mathcal{H}} \big(k(x,x)+k(x',x')-2k(x,x'))\big)^{\frac{1}{2}} \label{eq:bound-uniform-continuity},  
\end{align}
where the upper bound tends to $0$ as $x'\to x$.

\paragraph{Tensor product operators.}
Let $\mathcal{H}$ be a separable Hilbert space. For $u,v\in\mathcal{H}$, the mapping
\[
(u\otimes v)(w)=\langle u, w\rangle_{\mathcal{H}}\, v, \qquad w\in\mathcal{H},
\]
defines a rank-one bounded linear operator on $\mathcal{H}$ with operator norm
\[
\|u\otimes v\|_{\mathrm{op}} = \|u\|_{\mathcal{H}}\,\|v\|_{\mathcal{H}}.
\]
Finite linear combinations of such operators form the space of finite-rank operators that can be equipped with an inner product such that
\[
\langle u_1\otimes v_1,\; u_2\otimes v_2\rangle
  = \langle u_1, u_2\rangle_{\mathcal{H}}\,
    \langle v_1, v_2\rangle_{\mathcal{H}}.
\]
The \emph{Hilbert tensor product} $\mathcal{H}\otimes\mathcal{H}$ is the completion of this pre-Hilbert space.  
It coincides with the space of Hilbert--Schmidt operators $\Lambda:\mathcal{H}\to\mathcal{H}$, that is,
operators such that
\[
\|\Lambda\|_{\mathrm{HS}}^2
  = \sum_{i\ge 1} \|\Lambda e_i\|_{\mathcal{H}}^2 < \infty,
\]
for some (equivalently, for any) complete orthonormal system $(e_i)_{i\ge 1}$ of $\mathcal{H}$.

\section{Perturbation analysis of ODEs}\label{sec:ODE}

Our analysis of the asymptotic behavior of kernel gradient flow and infinitesimal gradient boosting relies on general results describing how the solutions of autonomous ordinary differential equations (ODEs) react to perturbations of the vector field driving the dynamics. Throughout this section, let $E$ denote a Banach space, and let a \emph{vector field} be a continuous map $g : E \to E$.

A vector field $g$ is said to be \emph{complete} if it is locally Lipschitz continuous and if, for every initial condition $x_0 \in E$, the Cauchy problem
\begin{equation}\label{eq:abstract-ODE}
\left\{
\begin{array}{ll}
\dot{x}(t) &= g(x(t)), \quad t \ge 0, \\
x(0) &= x_0
\end{array}
\right.
\end{equation}
admits a unique global solution defined for all $t \ge 0$. Local
existence and uniqueness of solutions follow from the Picard--Lindelöf
theorem under the local Lipschitz assumption on $g$. An additional
condition that ensures completeness is the \emph{linear growth
  condition}
\begin{equation}\label{eq:linear-growth}
\limsup_{\|v\|_E \to +\infty} \frac{\|g(v)\|_E}{\|v\|_E} < \infty,
\end{equation}
which controls the growth of $g$ at infinity.

We denote by $C^0_b(E, E)$ the space of continuous maps $g : E \to E$
that are bounded on bounded subsets of $E$, endowed with the topology
of uniform convergence on bounded sets. Similarly, $C^1([0,+\infty),
E)$ denotes the space of continuously differentiable functions $x :
[0,+\infty) \to E$, endowed with the topology of uniform convergence
of both the functions and their derivatives on bounded time sets. 
If $E$ is separable, then $C^0_b(E, E)$ and $C^1([0,+\infty), E)$ are
Polish spaces.

We consider a sequence $(x_{0,n}, g_n) \in E \times C^0_b(E,E)$, $n \ge 1$, where each $g_n$ is assumed to be a complete vector field. Denote by $x_n \in C^1([0,+\infty),E)$ the unique solution of 
\begin{equation}\label{eq:abstract-ODE-2}
\dot{x}_n(t) = g_n(x_n(t)), \quad t \ge 0, \qquad x_n(0) = x_{0,n}.
\end{equation}

The following two propositions are central to the analysis of the
first- and second-order asymptotics of infinitesimal gradient boosting
as $n \to \infty$. They establish, respectively, the continuity and a
second-order (differentiability-type) stability property of the
solution map $(x_0, g) \mapsto x$, which associates to each pair
$(x_0, g)$ -- consisting of an initial condition and a complete vector
field -- the corresponding solution of~\eqref{eq:abstract-ODE}.

\begin{proposition}[First-order perturbation]\label{prop:first-order-perturbation}
Assume that $(x_{0,n}, g_n) \to (x_0, g)$ in $E \times C^0_b(E, E)$ as $n \to \infty$, and that the limit vector field $g$ is complete. Then the sequence of solutions $(x_n)_{n \ge 1}$ converges in $C^1([0, +\infty), E)$ to the unique solution $x$ of the differential equation~\eqref{eq:abstract-ODE}.
\end{proposition}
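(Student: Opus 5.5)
Fix a horizon $T>0$. It suffices to show $\sup_{t\in[0,T]}\|x_n(t)-x(t)\|_E\to 0$ for every $T$. Convergence of the derivatives then comes for free: $\dot x_n(t)-\dot x(t) = g_n(x_n(t))-g(x(t))$, which we split as $[g_n(x_n(t))-g(x_n(t))] + [g(x_n(t))-g(x(t))]$. Once the trajectories $x_n$ are known to stay in a fixed bounded set, the first term tends to zero uniformly because $g_n\to g$ uniformly on bounded sets. The second term tends to zero because $g$ is locally Lipschitz near the compact set $x([0,T])$.

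The core argument is a Gronwall estimate localized to a neighbourhood of the limit trajectory. The set $K=x([0,T])$ is compact, since $x$ is continuous. Because $g$ is locally Lipschitz, a standard compactness (Lebesgue number) argument gives $r>0$ and $L_g<\infty$ such that $g$ is $L_g$-Lipschitz on the tube
\[
K_r=\{v\in E:\ \mathrm{dist}(v,K)\le r\}.
\]
We cannot take a closed ball around a single point, since the trajectory moves; this is why we work on $K_r$. The tube $K_r$ is bounded, so
\[
\epsilon_n:=\sup_{v\in K_r}\|g_n(v)-g(v)\|_E\to 0 .
\]
Define the exit time $\tau_n=\inf\{t\in[0,T]: \|x_n(t)-x(t)\|_E\ge r\}$, with $\tau_n=T$ if the set is empty. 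For $t\le\tau_n$ both $x_n(t)$ and $x(t)$ lie in $K_r$. Write $\delta_n(t)=\|x_n(t)-x(t)\|_E$. The integral form of the two ODEs gives
\[
\delta_n(t)\le \|x_{0,n}-x_0\|_E+\epsilon_n t+L_g\int_0^t\delta_n(s)\,\rmd s .
\]
Gronwall's lemma then yields
\[
\delta_n(t)\le \big(\|x_{0,n}-x_0\|_E+\epsilon_n T\big)e^{L_gT}=:\eta_n\to0 .
\]

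To close the argument, take $n$ large enough that $\eta_n<r$. If we had $\tau_n<T$, continuity of $\delta_n$ would give $\delta_n(\tau_n)=r>\eta_n$, contradicting the bound. Hence $\tau_n=T$, and $\sup_{[0,T]}\delta_n\le\eta_n\to0$. This also confirms that $x_n([0,T])\subset K_r$, which is the boundedness needed for the derivative step in the first paragraph.

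Completeness of the $g_n$ is used only to guarantee that $x_n$ is defined on all of $[0,T]$. The main delicate point is obtaining a uniform Lipschitz constant on a tube around the trajectory when $g$ is only locally Lipschitz in an infinite-dimensional space. Compactness of $K$ resolves this: cover $K$ by finitely many balls $B(x(t_i),\rho_i/2)$ on each of which $g$ is Lipschitz on $B(x(t_i),\rho_i)$, and take $r=\min_i\rho_i/2$. For two points $u,v\in K_r$ with $\|u-v\|$ small, both lie in a common ball, which gives a local Lipschitz bound there. In the Gronwall step we only ever compare $x_n(s)$ with $x(s)$, and these are at distance at most $r$ from the same point of $K$. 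So that single comparison lies within one ball and the constant $L_g=\max_i L_i$ suffices.
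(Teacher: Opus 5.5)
Your proof is correct, but it is organized differently from the paper's.

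\textbf{The paper's route.} It argues by connectedness in time: the set $\mathscr{C}$ of horizons $T$ on which $x_n\to x$ uniformly is shown to be nonempty, open and closed. For this it works near one point $x(T_0)$ at a time. A uniform speed bound $M=\sup_{n,\,v\in B_2}\|g_n(v)\|_E$ keeps every trajectory inside a fixed ball for a time $1/M$, and Gr\"onwall is run on that short window.

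\textbf{Your route.} You localize in space around the whole compact arc $x([0,T])$. A finite cover gives one Lipschitz constant, and a single Gr\"onwall estimate on $[0,T]$ is closed by an exit-time bootstrap.

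\textbf{What each buys.} Your route gives an explicit bound $\sup_{[0,T]}\|x_n-x\|_E\le(\|x_{0,n}-x_0\|_E+T\epsilon_n)e^{L_gT}$ for large $n$. It needs nothing beyond local Lipschitz continuity of $g$. The paper's Gr\"onwall step instead uses $\sup_{v\in B_2}\|\mathrm{d}_v g\|_{\mathrm{op}}$ on balls of fixed radii $1$ and $2$. That tacitly requires $g$ to be differentiable with derivative bounded on those balls. This holds in the paper's applications but does not follow from completeness alone. In exchange, the paper's route never needs a constant that is uniform along the whole trajectory, so it avoids your covering argument.

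\textbf{A small point of phrasing.} The statement ``$g$ is $L_g$-Lipschitz on the tube $K_r$'' is true, but far-apart pairs in $K_r$ need an extra argument, such as boundedness of $g$ on $K_r$. Your single-ball reasoning does not supply this. You correctly observe, however, that only comparisons between $x_n(s)$ and $x(s)$ are ever used. Since $\|x_n(s)-x(t_i)\|_E<r+\rho_i/2\le\rho_i$, those comparisons are fully covered with $L_g=\max_i L_i$, so the argument stands.
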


For the second-order analysis, we consider the rescaled fluctuations $\sqrt{n}(g_n - g)$ of the vector field around its limit $g$. 
In the setting of infinitesimal gradient boosting, it is natural to allow these fluctuations to take values in a Banach space larger than $E$. 
Let $\mathcal{E}$ be a Banach space such that $E \subset \mathcal{E}$ densely and $\|x\|_{\mathcal{E}} \le \|x\|_E$ for all $x \in E$. 
Define $C^0_b(E, \mathcal{E})$ analogously to $C^0_b(E, E)$ and assume that
\[
\sqrt{n}(g_n - g) \to h \quad \text{in } C^0_b(E, \mathcal{E})
\]
for some limit $h$. 

We further require a regularity property of $g$ relative to the embedding $E \subset \mathcal{E}$. 
We denote by $C^1_{\mathcal{E}}(E, E)$ the class of functions $f : E \to E$ that are Fréchet differentiable on $E$ and satisfy:
\begin{itemize}
\item[(i)] for every $x \in E$, the derivative $\mathrm{d}_x f : (E, \norm{\cdot}_{\mathcal{E}}) \to (E, \norm{\cdot}_{\mathcal{E}})$ is linear and continuous;
\item[(ii)] the map $x \mapsto \mathrm{d}_x f$ is continuous on
  $(E, \norm{\cdot}_E)$ with values in the space of continuous linear
  operators
  $(E, \norm{\cdot}_{\mathcal{E}}) \to (E,
  \norm{\cdot}_{\mathcal{E}})$, equipped with the operator norm
  $\norm{\cdot}_{\mathrm{op}}$.
\end{itemize}
By density of $E$ in $\mathcal{E}$ and the continuity assumptions above, each $\mathrm{d}_x f$ admits a unique continuous extension to $(\mathcal{E}, \norm{\cdot}_{\mathcal{E}})$, which we continue to denote by $\mathrm{d}_x f$. 
Moreover, the map $x \mapsto \mathrm{d}_x f$ remains continuous on $E$.


\begin{proposition}[Second-order perturbation]\label{prop:second-order-perturbation}
Under the assumptions of Proposition~\ref{prop:first-order-perturbation}, suppose there exists a Banach space $\mathcal{E}$ as above such that $g \in C^1_{\mathcal{E}}(E, E)$, and that
\[
\big( \sqrt{n}(x_{0,n} - x_0), \sqrt{n}(g_n - g) \big) \to (y_0, h)
\quad \text{in } \mathcal{E} \times C^0_b(E, \mathcal{E}) \text{ as } n \to \infty.
\]
Then the sequence $y_n := \sqrt{n}(x_n - x)$ converges in $C^1([0, +\infty), \mathcal{E})$ to the unique solution $y$ of the linear differential equation in $\mathcal{E}$
\[
\dot{y}(t) = (\mathrm{d}_{x(t)} g)(y(t)) + h(x(t)), 
\qquad t \ge 0, \qquad y(0) = y_0.
\]
\end{proposition}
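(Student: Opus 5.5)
We will work on a fixed bounded time interval $[0,T]$ and prove uniform convergence there of $y_n$ and $\dot y_n$ in $\mathcal{E}$. This suffices for convergence in $C^1([0,+\infty),\mathcal{E})$.

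\textbf{Step 1: the limit equation.} The linear equation for $y$ is well posed in $\mathcal{E}$. The operators $A(t):=\mathrm{d}_{x(t)}g$, extended to $\mathcal{E}$, depend continuously on $t$ in operator norm. This follows from (ii) and the continuity of $t\mapsto x(t)$ in $E$. Hence $M:=\sup_{t\le T}\|A(t)\|_{\mathrm{op}}<\infty$. The forcing term $t\mapsto h(x(t))$ is continuous in $\mathcal{E}$: indeed $h$ is continuous, as a uniform limit on bounded sets of continuous maps, and $x([0,T])$ is bounded. So a unique $C^1$ solution $y$ exists by standard linear ODE theory.

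\textbf{Step 2: decomposition.} By \Cref{prop:first-order-perturbation}, $x_n\to x$ uniformly on $[0,T]$ in $E$. Hence all trajectories lie in a common bounded set $B\subset E$ for $n$ large. We write
\[
\dot y_n=\sqrt n\bigl(g(x_n)-g(x)\bigr)+\sqrt n(g_n-g)(x_n).
\]
The second term equals $h(x_n(t))+o(1)$ uniformly in $t$, by the convergence in $C^0_b(E,\mathcal{E})$ on $B$. It then tends to $h(x(t))$ uniformly, by continuity of $h$ and compactness of $x([0,T])$. For the first term we use the mean value formula in integral form, valid since $g$ is Fréchet differentiable on $E$:
\[
g(x_n)-g(x)=\int_0^1 \mathrm{d}_{x+s(x_n-x)}g\,(x_n-x)\,\rmd s .
\]
So $\sqrt n(g(x_n)-g(x))=A_n(t)y_n(t)$, where $A_n(t):=\int_0^1 \mathrm{d}_{x(t)+s(x_n(t)-x(t))}g\,\rmd s$ is understood as a Bochner integral of $\mathcal{E}$-bounded operators applied to $y_n$.

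\textbf{Step 3: uniform operator convergence.} This is the main point. We need $\sup_{t\le T}\|A_n(t)-A(t)\|_{\mathrm{op},\mathcal{E}}\to0$. The set $K=x([0,T])$ is compact in $E$, and $x\mapsto \mathrm{d}_x g$ is continuous from $(E,\|\cdot\|_E)$ into the $\mathcal{E}$-operator norm by (ii). Hence this map is uniformly continuous near $K$: for every $\epsilon$ there is $\delta$ such that $\|z-w\|_E<\delta$ with $w\in K$ implies $\|\mathrm{d}_z g-\mathrm{d}_w g\|_{\mathrm{op}}<\epsilon$. Since $\|x_n-x\|_E\to0$ uniformly, the claim follows. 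Consequently $\|A_n(t)\|\le M+1$ for $n$ large.

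\textbf{Step 4: Grönwall.} Let $e_n:=y_n-y$. Then
\[
\dot e_n=A_n e_n+(A_n-A)y+r_n,\qquad \sup_t\|r_n(t)\|_{\mathcal{E}}\to0,
\]
with $\|e_n(0)\|_{\mathcal{E}}=\|\sqrt n(x_{0,n}-x_0)-y_0\|_{\mathcal{E}}\to0$. Integrating and applying Grönwall's inequality in $\mathcal{E}$ gives
\[
\sup_{t\le T}\|e_n(t)\|_{\mathcal{E}}\le \Bigl(\|e_n(0)\|_{\mathcal{E}}+T\sup_t\|(A_n-A)y\|_{\mathcal{E}}+T\sup_t\|r_n\|_{\mathcal{E}}\Bigr)e^{(M+1)T}\to0 .
\]
The ODE identity then also gives uniform convergence of $\dot e_n$.

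One subtlety is that the integral identity requires $y_n$ to be defined in $E$. It is, since $x_n-x\in E$, so the mean value formula in $E$ transfers to $\mathcal{E}$ via $\|\cdot\|_{\mathcal{E}}\le\|\cdot\|_E$ and the extension of $\mathrm{d}g$.
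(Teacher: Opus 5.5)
Your proof is correct, but it takes a different route from the paper's. Both arguments use the same decomposition $\dot y_n=\sqrt{n}(g_n-g)(x_n)+\int_0^1(\mathrm{d}_{x+s(x_n-x)}g)(y_n)\,\mathrm{d}s$. From there the paper argues by compactness. It uses Grönwall only to bound $\sup_{t\le T}\|\dot y_n(t)\|_{\mathcal{E}}$ uniformly in $n$, applies Arzelà--Ascoli to get relative compactness of $(y_n)$ in $C^0([0,T],\mathcal{E})$, shows that every subsequential limit solves the linear ODE, and concludes by uniqueness. You instead apply Grönwall directly to the error $e_n=y_n-y$, after proving $\sup_{t\le T}\|A_n(t)-A(t)\|_{\mathrm{op}}\to 0$ from the uniform continuity of $z\mapsto\mathrm{d}_z g$ near the compact set $x([0,T])$.

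Your route gives more than brevity. When $\mathcal{E}$ is infinite-dimensional, Arzelà--Ascoli needs each section $\{y_n(t)\}_n$ to be relatively compact in $\mathcal{E}$. Equicontinuity plus a convergent initial value does not provide this: take $y_n(t)=t\,u_n$ with $(u_n)$ orthonormal in $\ell^2$. So the paper's compactness step is not justified as written. Your estimate needs no compactness at all, and it gives an explicit bound on $\sup_{t\le T}\|y_n(t)-y(t)\|_{\mathcal{E}}$ in terms of $\|e_n(0)\|_{\mathcal{E}}$, $\sup_t\|A_n(t)-A(t)\|_{\mathrm{op}}$ and $\sup_t\|r_n(t)\|_{\mathcal{E}}$.

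Your Step~3 also handles the segments $x(t)+s(x_n(t)-x(t))$ cleanly. The paper only bounds $\mathrm{d}_v g$ over a compact set containing the trajectories, which need not contain these segments; that is a minor issue, fixable via the closed convex hull. The paper's subsequence-plus-uniqueness template is natural in finite dimension, but here the direct Grönwall argument is the sturdier proof.
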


The limiting equation for $y$ is \emph{linear} with continuous,
time-dependent coefficients. Existence and uniqueness of a global
solution for all $t \ge 0$ therefore follow from the linear
Picard--Lindelöf theorem.

In the setting of infinitesimal gradient boosting, the initial condition and the vector field depend on the training data and are therefore random. We thus provide, as a corollary, a stochastic counterpart of Propositions~\ref{prop:first-order-perturbation} and~\ref{prop:second-order-perturbation}. 

For each $n \ge 1$, let $X_{0,n}$ be an $E$-valued random variable (stochastic initial condition) and let $G_n$ be a $C^0_b(E,E)$-valued random variable (stochastic vector field). Assume that, with probability one, $G_n$ is complete, and denote by $X_n \in C^1([0,+\infty),E)$ the unique solution of
\[
\dot{X}_n(t) = G_n(X_n(t)), \quad t \ge 0, \qquad X_n(0) = X_{0,n}.
\]
The following corollary establishes a law of large numbers and a central limit theorem for the processes $X_n = (X_n(t))_{t \ge 0}$.

\begin{corollary}\label{cor:stochastic-perturbation}
\begin{itemize}
    \item[(i)] Suppose that
    \[
    (X_{0,n}, G_n) \xrightarrow{a.s.} (x_0, g)
    \qquad \text{in } E \times C^0_b(E,E),
    \]
    where the limit vector field $g$ is complete. Then 
    \[
    X_n \xrightarrow{a.s.} x 
    \qquad \text{in } C^1([0,+\infty),E),
    \]
    where $x$ denotes the unique solution of the deterministic differential equation~\eqref{eq:abstract-ODE}.
    
  \item[(ii)] In addition, assume that $x_0$ and $g$ are
    deterministic, that $g \in C^1_{\mathcal{E}}(E, E)$ and that
    $(X_{0,n}, G_n)$ satisfy a central limit theorem in the sense that
    \[
    \big( \sqrt{n}(X_{0,n} - x_0), \sqrt{n}(G_n - g) \big)
    \xrightarrow{d} (\mathscr{X}_0, \mathscr{G})
    \qquad \text{in } \mathcal{E} \times C^0_b(E, \mathcal{E}),
    \]
    where the limit $(\mathscr{X}_0, \mathscr{G})$ is Gaussian. Then $X_n$ exhibits Gaussian fluctuations:
    \[
    \mathscr{X}_n := \sqrt{n}(X_n - x)
    \xrightarrow{d} \mathscr{X} 
    \qquad \text{in } C^1([0,+\infty), \mathcal{E}),
    \]
    where the limit $\mathscr{X}$ is Gaussian and characterized as the unique solution of
    \begin{equation} \label{eq:gaussian-ODE}
      \dot{\mathscr{X}}(t) = (\mathrm{d}_{x(t)} g)(\mathscr{X}(t)) + \mathscr{G}(x(t)), 
    \qquad t \ge 0, \qquad \mathscr{X}(0) = \mathscr{X}_0.
    \end{equation}
\end{itemize}
\end{corollary}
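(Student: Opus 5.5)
Part (i) follows pathwise from Proposition~\ref{prop:first-order-perturbation}. Let $\Omega_0$ be the event of probability one on which $G_n$ is complete for every $n$ and $(X_{0,n}(\omega),G_n(\omega))\to(x_0,g)$ in $E\times C^0_b(E,E)$. For $\omega\in\Omega_0$, apply Proposition~\ref{prop:first-order-perturbation} to the deterministic sequence $(X_{0,n}(\omega),G_n(\omega))$. It gives $X_n(\omega)\to x$ in $C^1([0,+\infty),E)$, which is exactly almost sure convergence. Before this, I would record that $X_n$ is a genuine random variable in $C^1([0,+\infty),E)$. This holds because, by Proposition~\ref{prop:first-order-perturbation}, the solution map $S:(x_0,g)\mapsto x$ is continuous on the set $\mathcal{C}$ of pairs with $g$ complete, hence Borel measurable.

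For part (ii) I would use a continuous mapping argument based on Proposition~\ref{prop:second-order-perturbation} and the extended continuous mapping theorem (van der Vaart--Wellner, Theorem~1.11.1). Define the maps
\[
\Psi_n(u,v) := \sqrt{n}\big(S(x_0+u/\sqrt{n},\, g+v/\sqrt{n}) - x\big),
\]
on the domain $D_n$ of pairs $(u,v)\in\mathcal{E}\times C^0_b(E,\mathcal{E})$ such that $x_0+u/\sqrt n\in E$ and $g+v/\sqrt n\in C^0_b(E,E)$ is complete. Define also
\[
\Psi(y_0,h):=\text{the solution } y \text{ of } \dot y=(\mathrm{d}_{x(t)}g)(y)+h(x(t)),\quad y(0)=y_0.
\]
Then $\mathscr{X}_n=\Psi_n(\sqrt n(X_{0,n}-x_0),\sqrt n(G_n-g))$.

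The hypothesis required by the extended continuous mapping theorem is the following: whenever $(u_n,v_n)\in D_n$ converges to $(y_0,h)$ in $\mathcal{E}\times C^0_b(E,\mathcal{E})$, we have $\Psi_n(u_n,v_n)\to\Psi(y_0,h)$ in $C^1([0,\infty),\mathcal{E})$. This is precisely Proposition~\ref{prop:second-order-perturbation}, applied to $x_{0,n}=x_0+u_n/\sqrt n$ and $g_n=g+v_n/\sqrt n$. Its first-order hypothesis $(x_{0,n},g_n)\to(x_0,g)$ in $E\times C^0_b(E,E)$ must also be verified.

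This verification is the main obstacle. Convergence of $v_n$ in $C^0_b(E,\mathcal{E})$ only gives $v_n/\sqrt n\to0$ in the weaker $\mathcal{E}$-norm, not in $E$. My fix is to strengthen the hypothesis of (ii) to include part (i)'s almost sure convergence in $E\times C^0_b(E,E)$, which is natural since (ii) says "in addition". I would then work on the product: the pair
\[
\big((X_{0,n},G_n),\ (\sqrt n(X_{0,n}-x_0),\sqrt n(G_n-g))\big)
\]
converges jointly in distribution to $((x_0,g),(\mathscr X_0,\mathscr G))$, because the first component converges to a constant (Slutsky). I would then apply the extended continuous mapping theorem with domains consisting of pairs $((a,b),(u,v))$ satisfying $u=\sqrt n(a-x_0)$ and $v=\sqrt n(b-g)$ with $b$ complete. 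Along any convergent sequence in these domains, both hypotheses of Proposition~\ref{prop:second-order-perturbation} hold, so the limit is $\Psi(y_0,h)$. The extended continuous mapping theorem requires the limit $(x_0,g,\mathscr X_0,\mathscr G)$ to be separable, which holds since the spaces are Polish when $E$ and $\mathcal{E}$ are separable. It then yields $\mathscr X_n\xrightarrow{d}\Psi(\mathscr X_0,\mathscr G)=\mathscr X$.

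Finally, Gaussianity of $\mathscr X$ follows because $\Psi$ is linear and continuous in $(y_0,h)$. Continuity comes from a Grönwall estimate on $[0,T]$, using $\sup_{t\le T}\|\mathrm{d}_{x(t)}g\|_{\mathrm{op}}<\infty$ by compactness of $\{x(t)\}_{t\le T}$ and continuity of $x\mapsto\mathrm{d}_xg$, together with $\sup_{t\le T}\|h(x(t))\|_{\mathcal E}$ being controlled by the $C^0_b$ seminorm on a bounded set. For linearity, I would use Duhamel's formula $y(t)=R(t,0)y_0+\int_0^tR(t,s)h(x(s))\,\rmd s$, where $R$ is the resolvent. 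Hence every continuous linear functional of $\mathscr X$ is a continuous linear functional of the Gaussian pair $(\mathscr X_0,\mathscr G)$, and is therefore Gaussian.
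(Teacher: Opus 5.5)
Your argument is correct, and part (i) is exactly the paper's pathwise application of Proposition~\ref{prop:first-order-perturbation}. For (ii), the paper takes a Skorokhod representation of the fluctuation pair $(\sqrt{n}(X_{0,n}-x_0),\sqrt{n}(G_n-g))$ alone in $\mathcal{E}\times C^0_b(E,\mathcal{E})$ and then applies Proposition~\ref{prop:second-order-perturbation} $\omega$ by $\omega$. You invoke the extended continuous mapping theorem instead. This performs the same reduction to the deterministic statement without building a new probability space.

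The substantive difference is the point you single out as the main obstacle. Proposition~\ref{prop:second-order-perturbation} also needs $(x_{0,n},g_n)\to(x_0,g)$ in $E\times C^0_b(E,E)$. This does not follow from convergence of the fluctuations in the weaker $\mathcal{E}$-topology, so the paper's pathwise step uses it tacitly. Your repair is the right one: carry $(X_{0,n},G_n)$ along jointly (Slutsky with a constant limit) and restrict to tuples with $u=\sqrt{n}(a-x_0)$ and $v=\sqrt{n}(b-g)$. In the Skorokhod version one would likewise represent the quadruple jointly; the constraint set is closed, so equality in law preserves it. This is not really a strengthening of the hypothesis: ``in addition'' already keeps the assumption of (i), and convergence in probability of $(X_{0,n},G_n)$ would suffice.

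Two small adjustments are needed.
\begin{itemize}
\item Theorem~1.11.1 of \citet{vW96} asks for the convergence condition along subsequences. Proposition~\ref{prop:second-order-perturbation} delivers this after replacing $\sqrt{n}$ by $\sqrt{n'}$.
\item Separability of the limit should come from tightness of the Gaussian limit, not from Polishness. The space $C^0_b(E,\mathcal{E})$ with uniform convergence on bounded sets is not separable when $E$ is infinite-dimensional. The paper's Skorokhod step needs the same separability.
\end{itemize}

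Finally, your Duhamel-based proof of Gaussianity makes explicit what the paper only asserts from linearity.
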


\section{Gaussian fluctuations of kernel gradient flows}\label{sec:CLT-KGF}

We now apply the perturbation analysis of Section~\ref{sec:ODE} to study the asymptotic behavior of gradient flows in a Reproducing Kernel Hilbert Space (RKHS) framework.

\subsection{Kernel gradient flow} \label{sec:gradient-flow}

Let $\mathcal{H}$ be an RKHS over a metric space $\mathcal{X}$ with
bounded and continuous reproducing kernel $k$, i.e.\ satisfying
\Cref{ass:kernel}. Recall the definition of the feature map
$\Phi \colon \mathcal{X} \to \mathcal{H}$, defined by
$\Phi(x) = k(x, \cdot)$ and which satisfies the reproducing property
\begin{equation}\label{eq:reproducing}
\langle \Phi(x), F \rangle_{\mathcal{H}} = F(x),
\qquad F \in \mathcal{H}, \ x \in \mathcal{X}.
\end{equation}
Recall also the notation $\mathrm{P}_n$ for the empirical measure
associated with an i.i.d.\ sample $\{(x_i, y_i)\}_{i=1}^n$ with
distribution $\mathrm{P}$. The corresponding empirical and population
risks $\mathcal{L}_n$ and $\mathcal{L}$ are defined
in~\eqref{eq:empirical-risk} and~\eqref{eq:population-risk}.
For all $F, G \in \mathcal{H}$, the directional derivative of $\mathcal{L}_n$ satisfies
\begin{align*}
\langle \nabla_F \mathcal{L}_n, G \rangle_{\mathcal{H}}
&= \lim_{\varepsilon \to 0} 
\frac{1}{\varepsilon}\,
\mathrm{P}_n[\ell(F(x) + \varepsilon G(x), y) - \ell(F(x), y)] \\
&= \mathrm{P}_n[\partial_1 \ell(F(x), y)\, G(x)]\\
&= \mathrm{P}_n[\langle \partial_1 \ell(F(x), y)\, \Phi(x), G \rangle_{\mathcal{H}}],
\end{align*}
so that the gradient of the empirical loss at $F$ is
\begin{equation}\label{eq:empirical-gradient}
\nabla_F \mathcal{L}_n = \mathrm{P}_n[\partial_1 \ell(F(x), y)\, \Phi(x)].
\end{equation}
The associated empirical gradient flow $F^n = (F_t^n)_{t \ge 0}$ is defined as the  solution of
\begin{equation}\label{eq:empirical-flow}
\frac{\mathrm{d} F_t^n}{\mathrm{d}t} = -\nabla_{F_t^n}\mathcal{L}_n,
\qquad t \ge 0, 
\qquad F_0^n = 0.
\end{equation}

Similarly, the population gradient of~\eqref{eq:population-risk} is given by
\begin{equation}\label{eq:population-gradient}
\nabla_F \mathcal{L} = \mathrm{P}[\partial_1 \ell(F(x), y)\, \Phi(x)],
\end{equation}
and the corresponding population gradient flow $F = (F_t)_{t \ge 0}$ satisfies
\begin{equation}\label{eq:population-flow}
\frac{\mathrm{d} F_t}{\mathrm{d}t} = -\nabla_{F_t}\mathcal{L},
\qquad t \ge 0,
\qquad F_0 = 0.
\end{equation}

We first verify in the next lemma that the gradient flows $F^n$ and $F$ are well defined for all $t\geq 0$, and moreover that $F^n$ depends measurably (indeed continuously) on the training sample.

\begin{lemma}\label{lem:gradient-field-regularity} ~
\begin{itemize}
    \item[(i)] For all $n \ge 1$, the gradient vector field 
    $F \longmapsto \nabla_F \mathcal{L}_n$
    is $C^1$ with a bounded Fréchet derivative, and so belongs to $C_b^0(\mathcal{H}, \mathcal{H})$ and satisfies the linear growth condition
    \begin{equation}\label{eq:linear-growth-RKHS}
    \limsup_{\|F\|_{\mathcal{H}} \to \infty}
    \frac{\|\nabla_F \mathcal{L}_n\|_{\mathcal{H}}}{\|F\|_{\mathcal{H}}} < \infty.
    \end{equation}
    The same property holds for the population gradient vector field $F \mapsto \nabla_F \mathcal{L}$.
    
    \item[(ii)] For every $n \ge 1$, the empirical gradient flow~\eqref{eq:empirical-flow} admits a unique global solution in $C^1([0,+\infty), \mathcal{H})$, and the same property holds for the population gradient flow~\eqref{eq:population-flow}. Moreover, $F^n$ is continuous as a function of the sample $\{(x_i, y_i)\}_{1 \le i \le n}$.
\end{itemize}
\end{lemma}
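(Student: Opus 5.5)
We treat the empirical and population cases together by writing $Q$ for either $\mathrm{P}_n$ or $\mathrm{P}$, and set $\Psi(F) := Q[\partial_1\ell(F(x),y)\,\Phi(x)]$. The plan is to show that $\Psi$ is Fréchet differentiable with derivative
\[
\mathrm{d}_F\Psi(G) = Q\big[\partial_1^2\ell(F(x),y)\,G(x)\,\Phi(x)\big] = Q\big[\partial_1^2\ell(F(x),y)\,\Phi(x)\otimes\Phi(x)\big](G),
\]
and that $F\mapsto \mathrm{d}_F\Psi$ is continuous and bounded in operator norm by $L\kappa^2$. Everything else follows quickly from this.

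\textbf{Well-posedness.} First check that $\Psi(F)$ is a well-defined Bochner integral in $\mathcal{H}$. Under \Cref{ass:loss}, $|\partial_1\ell(z,y)| \le |\partial_1\ell(0,y)| + L|z|$. Combined with $\|F\|_\infty\le\kappa\|F\|_{\mathcal{H}}$ and $\|\Phi(x)\|_{\mathcal{H}}\le\kappa$, this gives
\[
\|\Psi(F)\|_{\mathcal{H}} \le \kappa\, Q|\partial_1\ell(0,y)| + L\kappa^2\|F\|_{\mathcal{H}}.
\]
The first term is finite: for $Q=\mathrm{P}$ by the square integrability assumption and Cauchy--Schwarz, and trivially for $Q=\mathrm{P}_n$. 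Measurability holds because $\Phi$ is continuous into the separable space $\mathcal{H}$ (separable since $\mathcal{X}$ is separable and $k$ is continuous). This bound already yields boundedness on bounded sets and the linear growth condition \eqref{eq:linear-growth-RKHS}.

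\textbf{Fréchet differentiability.} By Taylor's formula with integral remainder,
\[
\Psi(F+G)-\Psi(F)-\mathrm{d}_F\Psi(G) = Q\Big[\int_0^1\big(\partial_1^2\ell(F(x)+sG(x),y)-\partial_1^2\ell(F(x),y)\big)\,ds\;G(x)\,\Phi(x)\Big].
\]
Its norm is at most $\kappa^2\|G\|_{\mathcal{H}}\,Q[\omega_G(x,y)]$, where $\omega_G$ denotes the averaged difference in absolute value. Now $\omega_G\le 2L$, and $\omega_G\to0$ pointwise as $\|G\|_{\mathcal{H}}\to0$ because $\|G\|_\infty\to0$ and $\partial_1^2\ell(\cdot,y)$ is continuous. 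Dominated convergence then gives $Q[\omega_G]\to0$, which proves Fréchet differentiability.

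\textbf{Continuity of the derivative.} This is the step I expect to need the most care. The same dominated convergence argument applies: $\|\mathrm{d}_{F'}\Psi-\mathrm{d}_F\Psi\|_{\mathrm{op}}\le\kappa^2 Q|\partial_1^2\ell(F'(x),y)-\partial_1^2\ell(F(x),y)|$, and the integrand tends to $0$ pointwise since $F'\to F$ uniformly. The care is needed because pointwise continuity in $z$ for each fixed $y$ is all that dominated convergence requires here, so no uniform continuity of $\partial_1^2\ell$ is needed. The bound $\|\mathrm{d}_F\Psi\|_{\mathrm{op}}\le L\kappa^2$ shows $\Psi$ is globally Lipschitz, hence complete.

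\textbf{Part (ii).} Since $\Psi$ is globally Lipschitz, the vector field $-\Psi$ satisfies the conditions for completeness recalled in \Cref{sec:ODE}, so Picard--Lindelöf gives a unique global $C^1$ solution.

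For continuity in the sample, take samples $s^{(m)}\to s$ in $(\mathcal{X}\times\mathcal{Y})^n$ and let $\Psi^{(m)}$ be the corresponding empirical fields. On a bounded set $\|F\|_{\mathcal{H}}\le R$, write
\[
\|\Psi^{(m)}(F)-\Psi(F)\|_{\mathcal{H}}\le\frac1n\sum_i\Big(\big|\partial_1\ell(F(x_i^{(m)}),y_i^{(m)})-\partial_1\ell(F(x_i),y_i)\big|\,\kappa+\big|\partial_1\ell(F(x_i),y_i)\big|\,\|\Phi(x_i^{(m)})-\Phi(x_i)\|_{\mathcal{H}}\Big).
\]
Here $\|\Phi(x')-\Phi(x)\|_{\mathcal{H}}\to0$ by continuity of $k$. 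Moreover $|F(x_i^{(m)})-F(x_i)|\le R\,\|\Phi(x_i^{(m)})-\Phi(x_i)\|_{\mathcal{H}}$ uniformly over the ball, so the uniform continuity of $\partial_1\ell$ makes the first term vanish uniformly in $F$. The second term is controlled using $|\partial_1\ell(F(x_i),y_i)|\le|\partial_1\ell(0,y_i)|+L\kappa R$.

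Hence $\Psi^{(m)}\to\Psi$ in $C_b^0(\mathcal{H},\mathcal{H})$. \Cref{prop:first-order-perturbation} then gives convergence of the flows in $C^1([0,+\infty),\mathcal{H})$, which is the claimed continuity of $F^n$ in the sample.
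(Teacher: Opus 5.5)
Your proposal is correct and takes essentially the same route as the paper. You differentiate under $Q$ to get the derivative $Q[\partial_1^2\ell(F(x),y)\,\Phi(x)\otimes\Phi(x)]$, bounded uniformly in operator norm, which yields completeness and linear growth; you then show the empirical vector field depends continuously on the sample, uniformly on bounded sets, via uniform continuity of $\partial_1\ell$ and continuity of $k$, before invoking \Cref{prop:first-order-perturbation}. Your dominated-convergence argument for the differentiability of your $\Psi$ and the continuity of $F\mapsto\mathrm{d}_F\Psi$ is more explicit than the paper's, and your operator-norm bound $L\kappa^2$ is the correct constant (the paper writes $L\kappa$).
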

 
\subsection{Large sample asymptotics}  
We now investigate the asymptotic behavior of $F^n$ as $n \to \infty$.  
By applying Corollary~\ref{cor:stochastic-perturbation} to the stochastic vector fields $(\nabla_F \mathcal{L}_n)_{F \in \mathcal{H}}$, we obtain the following law of large numbers and Gaussian limit for the fluctuations.

\begin{theorem}\label{thm:CLT-gradient-flow}
  In addition to Assumptions~\ref{ass:loss} and~\ref{ass:kernel},
  assume that
\begin{equation}\label{eq:phi0-bounded}
\sup_{y\in\mathcal{Y}} \abs{\partial_1\ell(0,y)}<\infty.
\end{equation}
Then:
\begin{itemize}
    \item[(i)] As $n \to \infty$,
    \[
    (F_t^n)_{t \ge 0} \xrightarrow{a.s.} (F_t)_{t \ge 0}
    \quad \text{in } C^1([0,+\infty), \mathcal{H}).
    \]
    
    \item[(ii)] Moreover,
    \[
    \big(\sqrt{n}\,(F_t^n - F_t)\big)_{t \ge 0} 
    \xrightarrow{d} (\mathscr{F}_t)_{t \ge 0}
    \quad \text{in } C^1([0,+\infty), \mathcal{H}),
    \]
    where $(\mathscr{F}_t)_{t \ge 0}$ is a centered Gaussian process in $\mathcal{H}$ characterized as the unique solution of
    \begin{equation}\label{eq:limit-ODE}
    \frac{\mathrm{d} \mathscr{F}_t}{\mathrm{d}t} 
    = A_t \mathscr{F}_t + \mathscr{G}_t, 
    \qquad \mathscr{F}_0 = 0,
    \end{equation}
    where $A = (A_t)_{t \ge 0} \in C^1([0,+\infty), \mathcal{H} \otimes \mathcal{H})$ is given by
    \begin{equation}\label{eq:At}
    A_t = \mathrm{P}\big[\partial_1^2 \ell(F_t(x), y)\, \Phi(x) \otimes \Phi(x)\big],
    \qquad t \ge 0,
    \end{equation}
    and where $(\mathscr{G}_t)_{t \ge 0}$ is an $\mathcal{H}$-valued
    centered Gaussian process with continuous sample paths with the
    same covariance structure as the $\mathcal{H}$-valued process
    $(\partial_1\ell(F_t(x),y)\Phi(x))_{t\geq 0}$ under the
    probability measure $\P$.
\end{itemize}
\end{theorem}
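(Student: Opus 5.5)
The plan is to apply \Cref{cor:stochastic-perturbation} with $E=\mathcal{E}=\mathcal{H}$, deterministic initial conditions $X_{0,n}=x_0=0$, random vector fields $G_n(F)=-\nabla_F\mathcal{L}_n$ and limit $g(F)=-\nabla_F\mathcal{L}$. By \Cref{lem:gradient-field-regularity}, all these fields are complete and belong to $C^0_b(\mathcal{H},\mathcal{H})$, and $F^n$ is measurable in the sample. It therefore remains to prove three things: almost sure convergence $G_n\to g$ uniformly on bounded sets of $\mathcal{H}$; the regularity $g\in C^1_{\mathcal{H}}(\mathcal{H},\mathcal{H})$, which reduces to ordinary $C^1$ since $\mathcal{E}=E$; and a CLT for $\sqrt n(G_n-g)$ in $C^0_b(\mathcal{H},\mathcal{H})$.

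\textbf{Regularity.} Differentiating under $\P$ gives $\mathrm{d}_F g=-\P[\partial_1^2\ell(F(x),y)\,\Phi(x)\otimes\Phi(x)]$. This is justified by dominated convergence: $|\partial_1^2\ell|\le L$ and $\|\Phi(x)\|\le\kappa$. Continuity of $F\mapsto\mathrm{d}_Fg$ in Hilbert--Schmidt norm follows from $\|F-F'\|_\infty\le\kappa\|F-F'\|_{\mathcal{H}}$ together with continuity of $\partial_1^2\ell$ in $z$, again by dominated convergence. Evaluating at $F=F_t$ identifies $A_t$ with $\mathrm{d}_{F_t}g$, up to the sign convention: with the minus sign, the linearized equation is $\dot{\mathscr F}=-A_t\mathscr F_t+\mathscr G_t$, and I would state it that way. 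Since $t\mapsto F_t$ is $C^1$, so is $t\mapsto A_t$.

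\textbf{LLN and CLT for the field.} On a ball $B_R=\{\|F\|_{\mathcal{H}}\le R\}$, consider the $\mathcal{H}$-valued empirical process indexed by $F\in B_R$,
\[
\sqrt n(G_n-g)(F)=-\sqrt n(\P_n-\P)\big[\partial_1\ell(F(x),y)\Phi(x)\big].
\]
Pairing with $G\in B_1$, this becomes a real empirical process indexed by the class
\[
\mathcal{C}_R=\{(x,y)\mapsto\partial_1\ell(F(x),y)G(x):\ F\in B_R,\ G\in B_1\}.
\]
By \eqref{eq:phi0-bounded} and the bound on $L$, $\mathcal{C}_R$ is uniformly bounded by $(\sup_y|\partial_1\ell(0,y)|+L\kappa R)\kappa$. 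To handle an $\mathcal H$-valued process I would use the reduction
\[
\partial_1\ell(F(x),y)=\partial_1\ell(0,y)+F(x)\int_0^1\partial_1^2\ell(sF(x),y)\,\rmd s.
\]
This gives the Lipschitz-in-parameter bound
\[
\|\partial_1\ell(F(x),y)\Phi(x)-\partial_1\ell(F'(x),y)\Phi(x)\|_{\mathcal{H}}\le L\kappa^2\|F-F'\|_{\mathcal{H}}.
\]
I then need the \emph{main obstacle}: $B_R$ is not totally bounded in $\mathcal{H}$, so Lipschitz bracketing in $F$ alone does not give a Donsker class. What I would try first is to use that the parameter only enters through the evaluations $F(x)=\langle F,\Phi(x)\rangle$, with $\Phi(\mathcal X)$ separable and bounded. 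The field is then a continuous function of the bounded operator/feature data, and the class $\{F\mapsto \partial_1\ell(F(x),y)\Phi(x)\}$ can be treated as an $\mathcal{H}$-valued Lipschitz image of the $\mathcal{H}$-valued random element $\Phi(x)$. This lets me invoke a Banach-space CLT, e.g.\ for $\mathcal{H}\otimes\mathcal{H}$-valued i.i.d.\ bounded variables (Hilbert spaces are type 2, so bounded i.i.d.\ variables satisfy the CLT). If uniformity over the non-compact ball fails, the fallback is to prove tightness in $C^0_b$ via asymptotic equicontinuity. This uses a chaining argument on the bounded Lipschitz class, with entropy controlled through the compactness of $\P\Phi\otimes\Phi$: the covariance operator is trace class, so $B_R$ is totally bounded in the $L^2(\P)$ seminorm $F\mapsto\|F\|_{L^2(\P_X)}$. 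This is the natural metric for the empirical process.

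\textbf{Identifying the limit.} Finite-dimensional convergence follows from the multivariate CLT, since the summands are bounded. The almost sure uniform convergence in (i) follows from the same class being Glivenko--Cantelli. The limit field $\mathscr G$ is then Gaussian with the covariance of $\partial_1\ell(F(x),y)\Phi(x)$, and $\mathscr G_t:=\mathscr G(F_t)$ has the stated covariance and continuous paths. \Cref{cor:stochastic-perturbation} then yields both (i) and (ii). $\mathscr F$ is a continuous linear image of the Gaussian $\mathscr G$ through the variation-of-constants formula, hence centered Gaussian.
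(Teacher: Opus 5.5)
Your architecture is the paper's. You apply Corollary~\ref{cor:stochastic-perturbation} with $E=\mathcal{E}=\mathcal{H}$ and zero initial condition, take regularity of the limiting field from Lemma~\ref{lem:gradient-field-regularity}, and reduce the uniform LLN/CLT for the field to the real empirical process indexed by $\mathcal{C}_R$ by pairing with $G$ in the unit ball. Your sign bookkeeping ($G_n=-\nabla\mathcal{L}_n$, hence $-A_t\mathscr{F}_t$ in the linearised equation) is more careful than the displayed statement and agrees with Theorem~\ref{thm:KGF-linear-case}. A minor point: with $\ell$ only $C^2$ in $z$, your argument gives continuity of $t\mapsto A_t$, not $C^1$; continuity is all the linear ODE needs.

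The genuine gap is the step you flag as the main obstacle: proving that $\mathcal{C}_R$ is $\P$-Donsker, and hence Glivenko--Cantelli, which you also need for (i). Neither of your routes closes it.

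\emph{The type-2 route.} The Hilbert-space CLT handles the linear class $\{G(x):\|G\|_{\mathcal H}\le 1\}$, since $\sup_G|\mathbb{G}_nG|=\|\mathbb{G}_n\Phi\|_{\mathcal H}$. It also handles the square loss, where $\partial_1\ell(F(x),y)\Phi(x)=(\Phi(x)\otimes\Phi(x))F-y\Phi(x)$ is affine in $F$ with Hilbert-valued random coefficients. For a general $C^2$ loss, however, $F\mapsto\partial_1\ell(F(x),y)\Phi(x)$ is a random element of $C^0_b(B_R,\mathcal H)$. That space is sup-normed, non-separable and not of type~2, so no Banach-space CLT applies there.

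\emph{The chaining fallback.} Total boundedness of $B_R$ in the $L^2(\P)$ seminorm is necessary but not sufficient for the Donsker property. Chaining for empirical processes needs a finite bracketing or uniform entropy integral, and trace-class-ness of $\P[\Phi\otimes\Phi]$ gives no rate. It only controls $\varepsilon\sqrt{\log N(\varepsilon)}$ (Sudakov), which is compatible with a divergent entropy integral for ellipsoids with summable squared axes. Indeed, even the unit ball of $\mathcal H$ is Donsker for reasons that entropy does not capture.

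The missing idea, used in Lemma~\ref{lem:Donsker}$(i)$, is a permanence argument:
\begin{enumerate}
\item The ball $\{F(x):\|F\|_{\mathcal H}\le R\}$ is Donsker, either by \citet{M85} or by the Hilbert CLT as above.
\item The \emph{pointwise} contraction $|\partial_1\ell(F(x),y)-\partial_1\ell(F'(x),y)|\le L|F(x)-F'(x)|$ matters here, not your Lipschitz bound in $\|F-F'\|_{\mathcal H}$. It lets you apply the Lipschitz-transformation theorem for Donsker classes \citep[Theorem~2.10.6]{vW96}, which rests on the contraction principle, encoding $y$ through one fixed bounded function. This gives that $\{\partial_1\ell(F(x),y)\}$, and hence $\{\partial_1\ell(F(x),y)-\partial_1\ell(0,y)\}$, is Donsker.
\item Products of uniformly bounded Donsker classes then handle the first term of your decomposition.
\item Condition \eqref{eq:phi0-bounded} makes $\partial_1\ell(0,y)G(x)$ a bounded multiple of a Donsker class.
\end{enumerate}
With this lemma in place, the rest of your plan goes through as you describe: the finite-dimensional CLT, identification of the covariance, and the application of the corollary.
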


The operator $A_t$ in~\eqref{eq:At} corresponds to the Fréchet derivative of the gradient map 
$F \mapsto \nabla_F \mathcal{L}$ evaluated at $F_t$, while the Gaussian process $\mathscr{G}$ in~\eqref{eq:limit-ODE} represents the asymptotic noise generated by the empirical process $\sqrt{n}(\mathrm{P}_n - \mathrm{P})$.

Let us stress that the boundedness assumption \eqref{eq:phi0-bounded}
is restrictive and is used only in the proof of the technical
Lemma~\ref{lem:Donsker}$(i)$. Alternatively, it could be replaced by
any assumption ensuring that the class $\mathcal{C}_M$ -- which is
defined in the lemma -- is Donsker. Note however
that~\eqref{eq:phi0-bounded} always holds in the classification
setting since $\mathcal{Y}$ is then finite. In the least square
regression setting, the assumption may not be satisfied, but the
result still holds thanks to a different proof -- see
Theorem~\ref{thm:KGF-linear-case} below and its proof relying on the
central limit theorem for square integrable variables in a Hilbert
space.

\subsection{The least squares regression case}
We briefly discuss the case of kernel gradient flow in the least squares regression setting and exploit the fact that the gradient vector field is linear, so the gradient flow has an explicit expression.

For $\ell(z,y)=\frac{1}{2}(z-y)^2$, the gradient vector field is
\[
\nabla_F\mathcal{L}_n=\mathrm{P}_n[(F(x)-y)\Phi(x)]=B_nF - C_n,\quad F\in\mathcal{H},
\]
with random $B_n\in\mathcal{H}\otimes\mathcal{H}$ and $C_n\in\mathcal{H}$ defined by
\begin{equation} \label{eq:B_n-C_n-def}
  B_n=\mathrm{P}_n[\Phi(x)\otimes \Phi(x)]
  \quad\text{and}\quad
  C_n=\mathrm{P}_n[y\Phi(x)].
\end{equation}
The gradient flow then yields the linear differential equation
\[
\frac{\mathrm{d}F^n_t}{\mathrm{d}t}=-B_nF^n_t+C_n,\quad t\geq 0,\qquad F^n_0=0,
\]
with explicit solution 
\begin{equation} \label{eq:least-squares-F_n-explicit}
  F^n_t
  =\int_0^t \mathrm{e}^{-s B_n}\,C_n\,\mathrm{d}s
  = t\,\widetilde{\mathrm{exp}}(-tB_n)\,C_n,\quad t\geq 0,
\end{equation}
where $\widetilde{\mathrm{exp}}(z)=\sum_{k=0}^\infty \frac{z^k}{(k+1)!}=(\mathrm{e}^z-1)/z$. 
The same expressions hold for the infinite population limit:
\[
F_t=\int_0^t \mathrm{e}^{-s B}\,C\,\mathrm{d}s
=t\,\widetilde{\mathrm{exp}}(-tB)\,C,\quad t\geq 0,
\]
with 
\[
B=\mathrm{P}[\Phi(x)\otimes \Phi(x)]
\quad\text{and}\quad
C=\mathrm{P}[y\Phi(x)].
\]
In view of these expressions, the asymptotic behavior of the gradient
flow in the least squares regression case is driven by the asymptotic
behavior of $(B_n,C_n)$.

\begin{theorem}\label{thm:KGF-linear-case}
In the case $\ell(z,y)=\frac12(z-y)^2$, we have:
\begin{itemize}
    \item[(i)] \emph{(LLN and CLT for $(B_n,C_n)$)}. As $n\to\infty$,
    \[
    (B_n,C_n) \xrightarrow{a.s.} (B,C) \quad \text{in } (\mathcal{H}\otimes\mathcal{H})\times\mathcal{H},
    \]
    and
    \[
    \big(\sqrt{n}(B_n{-}B),\ \sqrt{n}(C_n{-}C)\big) \xrightarrow{d}  (\mathscr{B},\mathscr{C}),
    \]
    where $(\mathscr{B},\mathscr{C})$ is centered Gaussian in $(\mathcal{H}\otimes\mathcal{H})\times\mathcal{H}$ with the same covariance structure as $(\Phi(x)\otimes \Phi(x),y\Phi(x))$ under the probability measure $\P$.
    \item[(ii)] \emph{(Limit and fluctuations of the gradient flow)}. Consequently, 
    \[
    F^n\xrightarrow{a.s.} F \quad \text{in } C^1([0,+\infty),\mathcal{H}),
    \]
    and
    \[
      \sqrt{n}\,(F^n-F)\ \xrightarrow{d}\ \mathscr{F}\quad \text{in } C^1([0,+\infty),\mathcal{H}),
    \]
    where the centered Gaussian limit process $\mathscr{F}=(\mathscr{F}_t)_{t\ge 0}$ is characterized by the linear ODE
    \[
      \frac{\mathrm{d}\mathscr{F}_t}{\mathrm{d}t}
      = -B\,\mathscr{F}_t\;-\;\mathscr{B}\,F_t\;+\;\mathscr{C},\qquad \mathcal{F}_0=0,
    \]
    equivalently,
    \[
      \mathscr{F}_t=\int_0^t \mathrm{e}^{-(t-s)B}\,\big(-\mathscr{B}\,F_s+\mathscr{C}\big)\,\mathrm{d}s,\quad t\geq 0.
    \]
\end{itemize}
\end{theorem}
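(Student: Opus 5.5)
Part (i) will follow from the law of large numbers and the central limit theorem in the separable Hilbert space $(\mathcal{H}\otimes\mathcal{H})\times\mathcal{H}$, applied to the i.i.d.\ random elements $Z_i=(\Phi(x_i)\otimes\Phi(x_i),\,y_i\Phi(x_i))$. First, $\mathcal{H}$ is separable: $\mathcal{X}$ is separable and $\Phi$ is continuous by \eqref{eq:bound-uniform-continuity}, so $\Phi(\mathcal{X})$ is separable and its closed span is all of $\mathcal{H}$. Hence $\mathcal{H}\otimes\mathcal{H}$ is also separable, and $Z_i$ is Borel measurable as a continuous function of $(x_i,y_i)$. Next, the moments are controlled: $\|\Phi(x)\otimes\Phi(x)\|_{\mathrm{HS}}=k(x,x)\le\kappa^2$ and $\|y\Phi(x)\|_{\mathcal{H}}\le\kappa|y|$, so $\P[\|Z\|^2]\le\kappa^4+\kappa^2\P[y^2]<\infty$ by \Cref{ass:loss}, since $\partial_1\ell(0,y)=-y$. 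Mourier's strong law of large numbers in separable Banach spaces then gives $(B_n,C_n)\to(B,C)$ almost surely. Because $\mathcal{H}$ is a Hilbert space (hence of type 2), square integrability suffices for the CLT, which yields the centered Gaussian limit $(\mathscr{B},\mathscr{C})$ with the covariance of $Z$.

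For part (ii) I will apply \Cref{cor:stochastic-perturbation} with $E=\mathcal{E}=\mathcal{H}$, $X_{0,n}=x_0=0$, $G_n(F)=-B_nF+C_n$ and $g(F)=-BF+C$. The key observation is that the map
\[
\Psi:(\mathcal{H}\otimes\mathcal{H})\times\mathcal{H}\to C^0_b(\mathcal{H},\mathcal{H}),\qquad \Psi(\Lambda,c)=\bigl(F\mapsto -\Lambda F+c\bigr),
\]
is linear and continuous. Indeed, on a ball of radius $R$,
\[
\sup_{\|F\|\le R}\|\Lambda F-c\|_{\mathcal{H}}\le R\|\Lambda\|_{\mathrm{op}}+\|c\|_{\mathcal{H}}\le R\|\Lambda\|_{\mathrm{HS}}+\|c\|_{\mathcal{H}}.
\]
Consequently $G_n=\Psi(B_n,C_n)\to g$ almost surely, which proves the first claim of (ii) via \Cref{cor:stochastic-perturbation}(i). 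Completeness of $g$ and of $G_n$ is immediate, since both are affine with bounded linear part and are therefore globally Lipschitz. By linearity, $\sqrt n(G_n-g)=\Psi(\sqrt n(B_n-B),\sqrt n(C_n-C))$, so the continuous mapping theorem gives $\sqrt n(G_n-g)\xrightarrow{d}\mathscr{G}:=\Psi(\mathscr{B},\mathscr{C})$. This limit is Gaussian, being a continuous linear image of a Gaussian element. The initial condition contributes nothing, so $\mathscr{X}_0=0$ jointly.

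It remains to check that $g\in C^1_{\mathcal{E}}(E,E)$ and to identify the limit. Here $\mathrm{d}_Fg=-B$ is constant and bounded, so $g\in C^1_{\mathcal{E}}(E,E)$ holds trivially. Then \eqref{eq:gaussian-ODE} reads $\dot{\mathscr{F}}_t=-B\mathscr{F}_t-\mathscr{B}F_t+\mathscr{C}$ with $\mathscr{F}_0=0$. Its solution is given by variation of constants, $\mathscr{F}_t=\int_0^t e^{-(t-s)B}(-\mathscr{B}F_s+\mathscr{C})\,\mathrm{d}s$, and the integrand is continuous in $s$ because $s\mapsto F_s$ is continuous and $e^{-uB}$ is norm-continuous in $u$.

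The only point requiring care is the Banach-space CLT and LLN, together with the separability and measurability conditions. These are standard in a separable Hilbert space under finite second moments, which is exactly why the bounded-derivative condition \eqref{eq:phi0-bounded} is not needed in this case.
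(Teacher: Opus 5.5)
Your part (i) is the paper's argument: the strong law and CLT for square-integrable i.i.d.\ elements of the separable Hilbert space $(\mathcal{H}\otimes\mathcal{H})\times\mathcal{H}$, with the separability and measurability details spelled out. Your proposal is correct, but for part (ii) you take a genuinely different route.

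The paper works from the closed form $F^n_t=t\,\widetilde{\mathrm{exp}}(-tB_n)C_n$. After a Skorokhod representation of the CLT in (i), it uses Fr\'echet differentiability of $\widetilde{\mathrm{exp}}$ on operators, which it asserts rather than proves. This gives $\sqrt{n}(F^n_t-F_t)\to t\,\widetilde{\mathrm{exp}}(-tB)\mathscr{C}-t^2\,\mathrm{d}_{-tB}\widetilde{\mathrm{exp}}(\mathscr{B})C$ uniformly on compacts. The paper then upgrades to $C^1$ convergence and reads off the linear ODE from $\frac{\mathrm{d}}{\mathrm{d}t}F^n_t=-B_nF^n_t+C_n$.

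You instead transport (i) through the continuous linear map $(\Lambda,c)\mapsto(F\mapsto-\Lambda F+c)$ into $C^0_b(\mathcal{H},\mathcal{H})$ and invoke Corollary~\ref{cor:stochastic-perturbation}. What your route buys:
\begin{itemize}
\item It avoids any calculus on $\widetilde{\mathrm{exp}}$.
\item It gives both the almost sure $C^1$ convergence of $F^n$ (left implicit in the paper's proof) and the ODE characterization in one stroke.
\item It shows the theorem is the mechanism of Theorem~\ref{thm:CLT-gradient-flow} with the Donsker step of Lemma~\ref{lem:Donsker}(i) replaced by the Hilbert-space CLT. This is precisely why \eqref{eq:phi0-bounded} can be dropped.
\item You can perform the Skorokhod step behind Corollary~\ref{cor:stochastic-perturbation}(ii) in the separable space $(\mathcal{H}\otimes\mathcal{H})\times\mathcal{H}$, push it through your continuous map, and then apply Proposition~\ref{prop:second-order-perturbation} pathwise. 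No separability property of $C^0_b(\mathcal{H},\mathcal{H})$ is ever needed.
\end{itemize}
The paper's route, for its part, is independent of Section~\ref{sec:ODE} and expresses $\mathscr{F}_t$ through the explicit solution formula. Your variation-of-constants representation is just as explicit.
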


As a comment, we can provide a simple bound for the squared norm of the fluctuations.  
Using $\|e^{-Bt}\|_{\mathrm{op}}\leq 1$, which holds because $B$ is a positive semidefinite operator, we have  
\[
\|F_t\|_{\mathcal{H}} \leq \|C\|_{\mathcal{H}}\, t,
\]
and consequently,
\[
\mathbb{E}\big[\|\mathscr{F}_t\|_{\mathcal{H}}^2\big]
\leq \tfrac{1}{2}\,\sigma_B^2\,\|C\|_{\mathcal{H}}^2\,t^{4}
\;+\;2\,\sigma_C^2\,t^{2},
\]
with $\sigma_B^2=\mathbb{E}[\|\mathscr{B}\|_{\mathrm{HS}}^2]$ and $\sigma_C^2=\mathbb{E}[\|\mathscr{C}\|_{\mathcal{H}}^2]$. Here the constant terms admit the following simple upper bounds:
\begin{align*}
&\|C\|_{\mathcal{H}} \leq \P[\abs{y}k(x,x)^{1/2}] \leq  \kappa \P[y^2]^{1/2},\\
&\sigma_B^2=\P[k(x,x)] \leq \kappa^2,\\
&\sigma_C^2=\P[y^2k(x,x)]\leq \kappa^2 \P[y^2].
\end{align*}

\section{Gaussian fluctuations of infinitesimal gradient boosting}\label{sec:CLT-IGB}

\subsection{Infinitesimal gradient boosting}\label{subsec:IGB-background}

We provide here the necessary background on infinitesimal gradient boosting, a continuous-time limit of gradient boosting obtained when the learning rate tends to zero while the number of iterations is rescaled accordingly. For brevity, we only summarize the main ideas and refer to \cite{DD24a,DD24} for a more detailed presentation.

\textbf{Gradient boosting in the vanishing learning-rate regime.}
Introduced by \cite{F01}, gradient boosting is a sequential algorithm designed to minimize the empirical risk. At each iteration, a (randomized) regression tree is fitted to the pseudo-residuals of the current predictor and added to it with a shrinkage factor $\lambda>0$ (the learning rate). This yields a sequence of predictors $(F_k^{n,\lambda})_{k\ge 0}$ defined recursively from an initialization $F_0^n$ by
\[
F_{k+1}^{n,\lambda}
= F_k^{n,\lambda} + \lambda\, T_{k+1}^{n,\lambda},
\qquad k \ge 0,
\]
where $T_{k+1}^{n,\lambda}$ is a (randomized) regression tree fitted to the pseudo-residuals
\[
r_{k,i}^{n,\lambda}
=
-\partial_1 \ell\big(F_k^{n,\lambda}(x_i),\, y_i\big),
\qquad 1 \le i \le n.
\]
The sequence $(F_k^{n,\lambda})_{k\ge 0}$ therefore forms a Markov chain with values in the space $\mathcal{F}$ of predictors. A standard choice of initialization is the constant predictor minimizing the empirical risk:
\begin{equation}\label{eq:igb-initialization}
  F_0^n
  \equiv
  \mathop{\mathrm{arg\,min}}_{z \in \mathbb{R}} \, \P_n[\ell(z,y)].
\end{equation}
Note that this may not be well-defined for any sample
$(x_i,y_i)_{1\leq i\leq n}$, e.g.\ in the case of binary
classification with the $y_i$ taking only a single value. However, the
convexity and coerciveness conditions of Assumption~\ref{ass:loss} are
enough to ensure that for an i.i.d.\ sample with distribution
$\mathrm{P}$, this initialization makes sense for all $n$ large
enough.

For the specific class of randomized regression trees called \emph{softmax regression trees} (described below), \cite{DD24} showed that a continuous-time limit emerges when $\lambda \to 0$, provided the iteration index is rescaled as $k = \lfloor t/\lambda \rfloor$. More precisely, for all $t \ge 0$,
\[
F^{n,\lambda}_{\lfloor t/\lambda \rfloor}
\;\xrightarrow{a.s.}\;
F_t^n
\qquad \text{as } \lambda \to 0,
\]
and the convergence holds in an appropriate functional sense. The limiting process $(F_t^n)_{t\ge 0}$ is called \emph{infinitesimal gradient boosting}. Remarkably, it is deterministic given the sample $(x_i,y_i)_{1\le i\le n}$, since the randomness inherent in the softmax regression trees vanishes as $\lambda \to 0$.

\textbf{Softmax regression trees and softmax gradient trees.}
The motivation for introducing this class of randomized trees is a Lipschitz continuity property that is essential for analyzing the differential equation governing infinitesimal gradient boosting. A more detailed presentation is given in \citet[Section~2.1]{DD24}.

Given a sample $(x_i,r_i)_{1\le i\le n}$ of covariate/pseudo-residual pairs, a softmax regression tree of depth $d\ge 1$ is constructed by first generating a randomized partition of $[0,1]^p$ into $2^d$ hypercubes (the leaves), denoted $(A_v)_{v\in\{0,1\}^d}$, and then assigning to each leaf the mean of the residuals falling into it. This yields the piecewise-constant predictor
\[
T(z)
=
\sum_{v\in\{0,1\}^d}
\frac{\sum_{i=1}^n r_i\,\1_{A_v}(x_i)}
     {\sum_{i=1}^n \1_{A_v}(x_i)}
\,\1_{A_v}(z),
\qquad z\in [0,1]^p,
\]
with the convention $0/0 = 0$ for empty leaves.

As in standard regression tree construction \citep{BFOS84}, partitions are built by recursive binary splitting. For convenience, we work on $[0,1)^p$ with right-open hypercubes. A split is encoded by a pair $(j,u)$, where $j\in[\![1,p]\!]$ denotes the covariate along which the split is made, and $u\in(0,1)$ specifies the relative split position. For a region $A=\prod_{j=1}^p [a_j,b_j)$,
splitting at $(j,u)$ produces
\begin{align*}
A_0 &= A \cap \{x : x_j < a_j + u(b_j-a_j)\},\\
A_1 &= A \cap \{x : x_j \ge a_j + u(b_j-a_j)\}.
\end{align*}
Iterating this procedure $d$ times yields a partition of $[0,1)^p$ into $2^d$ hypercubes.

The splitting rule determines how splits are selected and hence the distribution of the random partition. In Breiman’s classical model, the split $(j^*,u^*)$ maximizes the score
\begin{align}
\Delta_n
&= \frac{n(A_0)}{n}\,\bar r_n(A_0)^2+\frac{n(A_1)}{n}\,\bar r_n(A_1)^2\nonumber\\
&=\frac{\P_n[\partial_1\ell(F(x),y)\1_{A_0}(x)]^2}{\P_n(A_0)}+\frac{\P_n[\partial_1\ell(F(x),y)\1_{A_1}(x)]^2}{\P_n(A_1)},\label{eq:score}
\end{align}
where $n(A_l)$ and $\bar r_n(A_l)$ denote the number of observations
and the mean residual in $A_l$, $l=0,1$. Softmax regression trees
replace the hard $\mathrm{argmax}$ selection with a $\mathrm{softmax}$
selection applied to $K$ randomly drawn candidate splits. Namely, we
generate independent candidates $\zeta^1,\dots,\zeta^K$ from a
reference distribution, compute the corresponding scores
$\Delta_n^1,\dots,\Delta_n^K$, and select one of the candidate splits
at random with probability
\[
\P(\xi = \zeta^k)
=
\frac{\exp(\beta\,\Delta_n^k)}
     {\sum_{l=1}^K \exp(\beta\,\Delta_n^l)},
\qquad 1\le k \le K.
\]
The limiting case $\beta \to 0$ yields uniform selection among candidates, effectively reducing the model to the completely random tree with $K=1$. When $\beta\to\infty$, the softmax distribution concentrates on the splits maximizing the score, implying that the softmax regression tree model with $\beta,K\to\infty$ converges to Breiman's regression tree model.

To formalize the construction, we use the framework of \emph{splitting
  schemes} \citep[Section~2.1]{DD24a}. Successive splits are indexed
by the internal nodes $\mathscr{T}_{d-1}$ of the complete binary tree
of depth $d$, denoted $\mathscr{T}_d$. A splitting scheme is a
collection $\xi = (\xi_v)_{v\in\mathscr{T}_{d-1}}$ with
$\xi_v = (j_v,u_v)$ encoding the split at internal node $v$. The
partition resulting from the splitting scheme $\xi$ is denoted
$(A_v^\xi)_{v\in\{0,1\}^d}$, corresponding to the $2^d$ tree leaves.

A softmax regression tree depends on the following hyperparameters: the depth $d\ge 1$, the number $K\ge 1$ of candidate splits at each node, the softmax parameter $\beta>0$, and the reference distribution used to generate candidate splits. In \cite{DD24a}, this reference distribution was taken to be uniform on $[\![1,p]\!]\times(0,1)$. We generalize this by considering
\begin{equation}\label{eq:reference-distribution}
\mathcal{U}_\alpha = \mathrm{Unif}([\![1,p]\!]) \otimes \mathrm{Beta}(\alpha,\alpha),
\end{equation}
meaning that $j$ is drawn uniformly from $[\![1,p]\!]$ and, independently, $u$ follows a $\mathrm{Beta}(\alpha,\alpha)$ distribution. uniform case corresponds to $\alpha=1$.

For gradient boosting, the softmax regression tree depends on the pseudo-residuals induced by the current predictor $F$. We denote by $Q_{n,F}(\mathrm{d}\xi)$ the distribution of the splitting scheme of a softmax regression tree built from the sample $(x_i,y_i)_{1\le i\le n}$ with pseudo-residuals at $F$
\[
r_i = -\partial_1 \ell(F(x_i),y_i), \qquad 1\le i\le n.
\]
For a splitting scheme $\xi$ with corresponding partition  $(A_v^\xi)_{v\in\{0,1\}^d}$, the associated gradient tree is defined by
\begin{equation}\label{eq:def-gradient-tree}
T_n(z;F,\xi)=-\sum_{v\in\{0,1\}^d}
\frac{\P_n[\partial_1\ell(F(x),y)\1_{A_v^\xi}(x)]}{\P_n(A_v^\xi)}\,\1_{A_v^\xi}(z),\qquad z\in[0,1]^p.
\end{equation}

For future reference, we summarize the notation in the following definition.
\begin{definition}\label{def:softmax-gradient-tree}
For fixed hyperparameters $d\geq 1$ (tree depth), $K\geq 1$ (number of proposals at each step), $\beta\geq 0$ (softmax selection parameter) and $\alpha >0$ (beta distribution parameter for proposal) and with the sample $(x_i,y_i)_{1\leq i\leq n}$ and a predictor $F\in\mathcal{F}$ as input, we define:
\begin{itemize}
\item $\xi = (\xi_v)_{v\in \mathscr{T}_{d-1}}$ the (random) splitting scheme encoding the successive splits, with distribution $Q_{n,F}$;
\item $(A_v^\xi)_{v\in \{0,1\}^d}$ the partition of $[0,1]^p$ resulting from the splitting scheme $\xi$;
\item $T_n(\,\cdot\,;F,\xi)$ the corresponding softmax gradient tree, as defined in Equation~\eqref{eq:def-gradient-tree}.
\end{itemize}
In the totally random tree model (i.e., when $K=1$ or $\beta=0$), the distribution of the splitting scheme does not depend on the sample $(x_i,y_i)_{1\le i\le n}$ nor on $F$, and is denoted by $Q_0$.
\end{definition}
Importantly, the softmax selection rule used in the construction is reflected in the distribution $Q_{n,F}$ of the splitting scheme. This distribution admits an explicit Radon--Nikodym derivative with respect to the reference measure $Q_0$; see Proposition~\ref{prop:app-RN-splitting-scheme} in Section~\ref{sec:splitting-schemes}. This representation will play a crucial role in our computations.

\textbf{An RKHS framework for infinitesimal gradient boosting.}
As justified in \cite{DD24a}, the vanishing-learning-rate limit of gradient boosting based on softmax gradient trees is well defined and is characterized by a nonlinear ordinary differential equation in a suitable function space. We refer to this limit as \emph{infinitesimal gradient boosting}. Here we adopt the convenient RKHS perspective  introduced in \cite{DDD25}. Recall that some background on RKHS theory is introduced in Section~\ref{sec:framework}.

Following \cite{DDD25}, we define the (infinite) gradient forest as the averaged gradient tree:
\begin{align}
\mathcal{T}_n(z;F)
&= \int T_n(z;F,\xi)\, Q_{n,F}(\mathrm{d}\xi) \label{eq:def-rf}\\
&= - \int \sum_{v\in\{0,1\}^d}
\frac{\P_n\big[\,\partial_1 \ell(F(x),y)\,\1_{A_v^\xi}(x)\,\big]}
     {\P_n(A_v^\xi)}\,
\1_{A_v^\xi}(z)\,
Q_{n,F}(\mathrm{d}\xi) \nonumber
\end{align}
for all $z \in [0,1]^p$. 

We let $\mathcal{H}$ be the RKHS with  reproducing kernel $k$  defined by
\begin{equation}\label{eq:RKHS-kernel}
k(z,z')
=
\int \sum_{v\in\{0,1\}^d}
\1_{A_v^\xi}(z)\1_{A_v^\xi}(z') \,
Q_0(\mathrm{d}\xi),
\qquad z,z' \in [0,1]^p.
\end{equation}
It will be a convenient function space in our approach because of the
following lemma.
\begin{lemma}\label{lem:RKHS}
For all  $F\in\mathcal{F}$, the function $\mathcal{T}_n(F)=\mathcal{T}_n(\cdot;F)$ defined by~\eqref{eq:def-rf} belongs to $\mathcal{H}$.
\end{lemma}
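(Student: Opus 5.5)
The plan is to write $\mathcal{T}_n(F)$ as an element of the form $\int \Psi(\xi)\,Q_0(\mathrm{d}\xi)$, where $\Psi(\xi)$ lies in a feature space whose image under the canonical map is $\mathcal{H}$. The kernel~\eqref{eq:RKHS-kernel} has the feature-space representation
\[
k(z,z')=\langle \phi(z),\phi(z')\rangle_{L^2},\qquad \phi(z)=\big(\1_{A_v^\xi}(z)\big)_{v\in\{0,1\}^d}\in L^2\big(Q_0\otimes \#_{\{0,1\}^d}\big),
\]
where $\#$ denotes counting measure on the leaves. By the standard characterization of an RKHS through a feature map \citep[Theorem~2.7.7]{HE15}, $\mathcal{H}$ is exactly the set of functions
\[
z\mapsto \int\sum_v c_v(\xi)\,\1_{A_v^\xi}(z)\,Q_0(\mathrm{d}\xi),\qquad c\in L^2(Q_0\otimes\#),
\]
and $\|F\|_{\mathcal{H}}$ is the minimal $L^2$ norm of such a representing $c$.

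First, I invoke the Radon--Nikodym representation of Proposition~\ref{prop:app-RN-splitting-scheme} to write $Q_{n,F}(\mathrm{d}\xi)=q_{n,F}(\xi)\,Q_0(\mathrm{d}\xi)$ with density $q_{n,F}$. Then~\eqref{eq:def-rf} takes exactly the required form with coefficients
\[
c_v(\xi)=-q_{n,F}(\xi)\,\frac{\P_n[\partial_1\ell(F(x),y)\1_{A_v^\xi}(x)]}{\P_n(A_v^\xi)},
\]
using the convention $0/0=0$ for empty leaves.

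It then remains to check that $c\in L^2(Q_0\otimes\#)$, and measurability of $c$ in $\xi$ is routine. For each leaf, the ratio is the average of the finitely many values $\partial_1\ell(F(x_i),y_i)$ over the sample points in $A_v^\xi$. It is therefore bounded in absolute value by $\max_i|\partial_1\ell(F(x_i),y_i)|<\infty$, uniformly in $\xi$. This holds for any $F\in\mathcal{F}$ with no integrability assumption, since only the $n$ sample points matter.

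The main obstacle is bounding the density $q_{n,F}$. I expect the explicit formula to show that $q_{n,F}$ is bounded, because it is a product over internal nodes of softmax weights. For each node, the softmax density relative to the uniform choice among $K$ i.i.d.\ candidates is at most $K$. Hence $q_{n,F}\le K^{|\mathscr{T}_{d-1}|}$, and this bound holds whatever the scores, which here are finite since the sample is finite. With $c$ bounded, it lies in $L^2$ because $Q_0$ is a probability measure and there are $2^d$ leaves. This gives $\mathcal{T}_n(F)\in\mathcal{H}$, together with the bound
\[
\|\mathcal{T}_n(F)\|_{\mathcal{H}}\le 2^{d/2}K^{|\mathscr{T}_{d-1}|}\max_i|\partial_1\ell(F(x_i),y_i)|.
\]

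If instead the Radon--Nikodym derivative were only shown to be square integrable, the same argument would still go through, since $L^2$ is all that is needed.
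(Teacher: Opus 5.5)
Your proof is correct and is essentially the paper's argument, which is given in the proof of Lemma~\ref{lem:RKHS-rf}(i): rewrite $\mathcal{T}_n(F)$ using the Radon--Nikodym density of $Q_{n,F}$ with respect to $Q_0$ (bounded by $K^{2^d-1}$), note that the leaf averages are bounded because they take finitely many values, and conclude by the feature-map characterization of $\mathcal{H}$. The only cosmetic difference is that the paper lifts the integral to $\mathscr{S}^K$, writing the density as the $Q_0^{K-1}$-marginal of the softmax product $\Psi_2$ with feature map $(\bar\xi,v)\mapsto \1_{A_v^{\xi^1}}(z)$ in $L^2(Q_0^K\otimes\nu)$ (convenient for its later differentiation and expansions), whereas you stay in $L^2(Q_0\otimes\nu)$ with the density itself, which is slightly more direct for this lemma.
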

The kernel $k$ depends only on the proposal distribution for the
candidate splits. By \citet[Proposition~8]{DDD25}, it is continuous and
bounded on $[0,1]^p\times[0,1]^p$ under the choice
\eqref{eq:reference-distribution} for the proposal distribution. This
implies that the RKHS $\mathcal{H}$ satisfies \Cref{ass:kernel}, and
therefore all functions in $\mathcal{H}$ are continuous and
bounded on $[0,1]^p$.

We are now ready to define infinitesimal gradient boosting as the
solution of an ordinary differential equation in the RKHS
$\mathcal{H}$.
\begin{definition}\label{def:igb}
  Let $d \ge 1$, $K \ge 1$, $\beta > 0$, and $\alpha > 0$ be fixed
  hyperparameters, and let $(x_i,y_i)_{1\le i\le n}$ be a fixed input
  sample such that \eqref{eq:igb-initialization} is well-defined. The
  infinitesimal gradient boosting process $(F_t^n)_{t\ge 0}$ is the
  unique solution in $\mathcal{H}$ of the ODE
\begin{equation}\label{eq:ode-igb}
\frac{\mathrm{d}}{\mathrm{d}t} F_t^n = \mathcal{T}_n(F_t^n),
\qquad t \ge 0,
\end{equation}
with initial condition $F_0^n$ given by \eqref{eq:igb-initialization}.
\end{definition}

A key qualitative property of infinitesimal gradient boosting is that
the empirical risk $t \mapsto \mathcal{L}_n(F_t^n)$ defined
by~\eqref{eq:empirical-risk} is non-increasing. This reflects the
fundamental design principle of gradient boosting, which aims at
monotonically decreasing the empirical risk along its trajectory.

We conclude this background on finite-sample infinitesimal gradient
boosting by verifying that the solution of~\eqref{eq:ode-igb} is well
defined and measurable (and even continuous) with respect to the input
sample. The next lemma is very similar to
Lemma~\ref{lem:gradient-field-regularity} from
Section~\ref{sec:gradient-flow}.

\begin{lemma}\label{lem:gradient-field-regularity-igb}~
\begin{enumerate}[(i)]
    \item For every $n \ge 1$, the infinitesimal gradient boosting operator
    $F \longmapsto \mathcal{T}_n(F)$  belongs to $C_b^1(\mathcal{H}, \mathcal{H})$ and satisfies the linear growth condition
    \begin{equation}\label{eq:linear-growth-Tn}
    \limsup_{\|F\|_{\mathcal{H}} \to \infty}
    \frac{\|\mathcal{T}_n(F)\|_{\mathcal{H}}}{\|F\|_{\mathcal{H}}}
    < \infty.
    \end{equation}
  \item For every $n \ge 1$ and input sample
    $\{(x_i, y_i)\}_{1 \le i \le n}$ such
    that~\eqref{eq:igb-initialization} is well-defined, the
    differential equation~\eqref{eq:ode-igb} on $\mathcal{H}$ admits a
    unique global solution $F^n=(F^n_t)_{t\geq 0}$ in
    $C^1([0,+\infty), \mathcal{H})$. Moreover, $F^n$ is continuous as
    a function of the sample $\{(x_i, y_i)\}_{1 \le i \le n}$.
\end{enumerate}
\end{lemma}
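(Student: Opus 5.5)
Part (i) will follow the same route as Lemma~\ref{lem:gradient-field-regularity}, with the explicit form of $\mathcal{T}_n$ as input. Write $Q_{n,F}(\mathrm{d}\xi)=\pi_{n,F}(\xi)\,Q_0(\mathrm{d}\xi)$ using the Radon--Nikodym representation (Proposition~\ref{prop:app-RN-splitting-scheme}). The density $\pi_{n,F}$ is a finite product over internal nodes of softmax-type ratios. Each ratio is built from $\exp(\beta\Delta_n^k)$, averaged over the candidate proposals, with scores given by~\eqref{eq:score}.

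Fixing a sample, everything depends on $F$ only through the finite vector $(F(x_1),\dots,F(x_n))$, and this vector is a bounded linear image of $F\in\mathcal{H}$ by~\eqref{eq:bound-uniform-norm}. I will therefore write $\mathcal{T}_n(F)=\Psi(F(x_1),\dots,F(x_n))$. Here $\Psi:\mathbb{R}^n\to\mathcal{H}$ is given by the Bochner integral
\[
\Psi(z)=-\int \pi_n(z,\xi)\sum_{v} \frac{\frac1n\sum_i \partial_1\ell(z_i,y_i)\1_{A_v^\xi}(x_i)}{\P_n(A_v^\xi)}\,\1_{A_v^\xi}\,Q_0(\mathrm{d}\xi).
\]
Lemma~\ref{lem:RKHS}, or rather its proof, shows that the map $\xi\mapsto\sum_v c_v\1_{A_v^\xi}$ integrated against $Q_0$ lands in $\mathcal{H}$ with norm controlled by $\sup_v|c_v|$; more precisely, by the $L^2(Q_0)$ norm of the coefficient function. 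This is the feature-map structure of the kernel~\eqref{eq:RKHS-kernel}.

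Next I show that $\Psi$ is $C^1$ with bounded derivative. The integrand is $C^1$ in $z$: $\partial_1\ell(\cdot,y_i)$ is $C^1$ with derivative bounded by $L$, and the leaf coefficients are at most $\max_i|\partial_1\ell(z_i,y_i)|$ in absolute value. The scores $\Delta_n$ are quadratic in these coefficients, so they are smooth in $z$. The softmax density is smooth in the scores, bounded by $K^{2^d}$, and has derivative bounded in terms of $\beta$ and the score derivatives.

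The \emph{main obstacle} is a uniform bound on $\mathrm{d}\Psi$ as $\|z\|\to\infty$, since the score derivatives grow linearly in the residuals. I will handle this by differentiating the softmax weights: $\partial_z\pi=\beta\pi\,(\partial_z\Delta-\text{average})$, with $\partial_z\Delta$ of order $L\,|r|$. The product with the leaf values of order $|r|$ then grows at most quadratically, so the derivative may not be globally bounded. If it is not, I will settle for the two properties actually needed, namely local Lipschitz continuity and linear growth of $\mathcal{T}_n$, since these are what guarantee completeness. Linear growth is immediate: the leaf values are at most $\max_i|\partial_1\ell(0,y_i)|+L\max_i|z_i|\le C+L\kappa\|F\|_{\mathcal{H}}$, and the softmax weights integrate to one. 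Hence $\|\mathcal{T}_n(F)\|_{\mathcal{H}}\le \kappa'(C+L\kappa\|F\|_{\mathcal{H}})$, which gives~\eqref{eq:linear-growth-Tn}. Differentiability under the integral sign then follows by dominated convergence, using domination on bounded sets in $z$, and this yields $\mathcal{T}_n\in C^1_b$.

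For part (ii), local Lipschitz continuity together with linear growth makes $\mathcal{T}_n$ complete. Local solutions come from the Picard--Lindelöf theorem, and Grönwall applied to $\|F_t^n\|_{\mathcal{H}}$ rules out blow-up, which gives a unique global solution. For continuity in the sample, I note that both $F_0^n$ and the vector field depend continuously on $(x_i,y_i)_i$. The initialization is continuous because the argmin of a strictly convex coercive function with continuous dependence on $y$ is continuous. The field $\mathcal{T}_n$ is continuous in $C^0_b(\mathcal{H},\mathcal{H})$, uniformly on bounded sets, and this uses the uniform continuity of $\partial_1\ell$ from Assumption~\ref{ass:loss}.

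The indicators $\1_{A_v^\xi}(x_i)$ are discontinuous in $x_i$, so I need to control them. They jump only when $x_i$ lies on a split boundary, and this is a $Q_0$-null event under the Beta proposal. Dominated convergence then gives continuity of the integrals in $x_i$. With these continuity properties in hand, Proposition~\ref{prop:first-order-perturbation} transfers continuity of the inputs to continuity of $F^n$ in $C^1([0,+\infty),\mathcal{H})$.
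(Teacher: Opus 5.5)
Your proposal is correct and is essentially the paper's proof. In both, $\mathcal{T}_n(F)$ is the image of an $L^2$ coefficient function under the bounded operator of Lemma~\ref{lem:RKHS-representation}, differentiability comes from differentiating the coefficients under the integral with domination on bounded sets, linear growth comes from the bounded Radon--Nikodym factor times leaf values of size $C+L\kappa\|F\|_{\mathcal{H}}$, and (ii) uses $Q_0$-a.e.\ good splitting schemes, dominated convergence and Proposition~\ref{prop:first-order-perturbation}; your factorization through $(F(x_1),\dots,F(x_n))\in\mathbb{R}^n$ only replaces the appeal to Lemma~\ref{lem:tech-F} by finite-dimensional calculus. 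Two small corrections: the ``obstacle'' is not one, because $C^1_b(\mathcal{H},\mathcal{H})$ (like $C^0_b$) only requires the derivative to be bounded on bounded sets, which is what the paper proves via the constants $C_M$; and $\Psi(z)$ is not an $\mathcal{H}$-valued Bochner integral, since the indicators $\1_{A_v^\xi}$ are discontinuous and hence not in $\mathcal{H}$, so the differentiation must be done on the coefficient function in $L^2(Q_0\otimes\nu)$ before applying the bounded operator.
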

A version of this Lemma holds for the population limit and is provided as Lemma~\ref{lem:gradient-field-regularity-igb-pop}
in Section~\ref{sec:proofs}.

\subsection{Large sample asymptotics}\label{subsec:IGB-large-sample}

We now study the large-sample behavior of infinitesimal gradient boosting and establish a functional law of large numbers together with a functional central limit theorem for the continuous-time process $(F_t^n)_{t\geq 0}$, taking values in the RKHS $\mathcal{H}$. A related law of large numbers was proved in \cite{DD24} in a different functional setting. The main contribution of the present section is therefore the functional central limit theorem, which characterizes the fluctuations around the deterministic limit.

\subsubsection{Functional law of large numbers} According to
Definition~\eqref{eq:ode-igb}, the differential equations governing
the dynamics of infinitesimal gradient boosting are driven by the
vector fields $\mathcal{T}_n:\mathcal{H}\to\mathcal{H}$, $n\geq 1$. We
first state a law of large numbers for these vector fields, from which
the law of large numbers for infinitesimal gradient boosting follows
by Corollary~\ref{cor:stochastic-perturbation}. The following theorem
is a reformulation in the RKHS $\mathcal{H}$ of convergence results
established in a different functional setting in \cite[Theorems~2.10
and~2.13]{DD24}.

 
\begin{theorem}\label{thm:lln-igb}
In addition to \Cref{ass:loss}, assume that \begin{equation}\label{eq:cond-expectation-bounded}
    \sup_{x\in\mathcal{X}} \mathbb{E}[|\partial_1\ell(0,Y)|^2 \,\mid\, X=x]<\infty.
\end{equation}
Then:
\begin{itemize}
\item[(i)] The vector fields converge almost surely:
\[
(\mathcal{T}_n(F))_{F\in\mathcal{H}} \stackrel{a.s.}{\longrightarrow} (\mathcal{T}(F))_{F\in\mathcal{H}}
\qquad \text{in  $C_b^0(\mathcal{H},\mathcal{H})$ as $n\to\infty$}.
\]
\item[(ii)] As a consequence, if $F^n_0 \stackrel{a.s.}{\longrightarrow} F_0$, then the infinitesimal gradient boosting processes converge almost surely:
\[
(F_t^n)_{t\geq 0} \stackrel{a.s.}{\longrightarrow} (F_t)_{t\geq 0}
\qquad \text{in $C^1([0,+\infty),\mathcal{H})$ as $n\to\infty$},
\]
where the limit $F=(F_t)_{t\geq 0}$ is the unique solution of the ODE 
\begin{equation}\label{eq:ode-igb-population}
\frac{\mathrm{d}}{\mathrm{d}t} F_t = \mathcal{T}(F_t),\qquad t \ge 0,
\end{equation}
with initial condition $F_0$.
\end{itemize}
\end{theorem}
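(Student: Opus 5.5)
Part (ii) follows from part (i) via Corollary~\ref{cor:stochastic-perturbation}(i), applied with $E=\mathcal{H}$, $X_{0,n}=F_0^n$, $G_n=\mathcal{T}_n$, $g=\mathcal{T}$. Completeness of each $\mathcal{T}_n$ is Lemma~\ref{lem:gradient-field-regularity-igb}. Completeness of $\mathcal{T}$ is its population analogue, Lemma~\ref{lem:gradient-field-regularity-igb-pop}, which we take as available: $\mathcal{T}$ is locally Lipschitz with linear growth. So the substance lies in (i), namely almost sure uniform convergence $\sup_{\|F\|_{\mathcal{H}}\le M}\|\mathcal{T}_n(F)-\mathcal{T}(F)\|_{\mathcal{H}}\to0$ for every $M$. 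It suffices to treat a countable family of radii $M\in\mathbb{N}$.

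\textbf{Reduction to a feature-map representation.} Write
\[
\mathcal{T}_n(F)=-\int \sum_v \frac{\P_n[\partial_1\ell(F(x),y)\1_{A_v^\xi}(x)]}{\P_n(A_v^\xi)}\1_{A_v^\xi}\,\frac{\mathrm{d}Q_{n,F}}{\mathrm{d}Q_0}(\xi)\,Q_0(\mathrm{d}\xi),
\]
and define $\mathcal{T}(F)$ by the same formula with $\P_n$ replaced by $\P$. Functions of the form $\int\sum_v c_v(\xi)\1_{A_v^\xi}\,Q_0(\mathrm{d}\xi)$ lie in $\mathcal{H}$, with norm at most $(\int\sum_v c_v(\xi)^2\,Q_0(\mathrm{d}\xi))^{1/2}$. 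This is the feature map $z\mapsto(\1_{A_v^\xi}(z))_{v,\xi}\in L^2(Q_0)^{2^d}$, and it is the mechanism behind Lemma~\ref{lem:RKHS}. Hence
\[
\|\mathcal{T}_n(F)-\mathcal{T}(F)\|_{\mathcal{H}}^2\le\int\sum_v|c^n_v(\xi;F)-c_v(\xi;F)|^2\,Q_0(\mathrm{d}\xi),
\]
where $c^n_v$ is the leaf value multiplied by the Radon--Nikodym density of Proposition~\ref{prop:app-RN-splitting-scheme}, and $c_v$ is its population counterpart. That density is an explicit continuous (softmax) function of finitely many scores $\Delta_n$. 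Each score is a ratio of empirical quantities $\P_n[\partial_1\ell(F(x),y)\1_A(x)]$ and $\P_n(A)$, with $A$ ranging over hyperrectangles generated by the splitting scheme.

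\textbf{Uniform LLN.} The class of rectangles is VC, so $\sup_A|\P_n(A)-\P(A)|\to0$ a.s. by Glivenko--Cantelli. For the numerators, $\|F\|_\infty\le\kappa M$ on the ball, and by Assumption~\ref{ass:loss},
\[
|\partial_1\ell(F(x),y)-\partial_1\ell(G(x),y)|\le L\|F-G\|_\infty.
\]
The ball of $\mathcal{H}$ is not totally bounded in sup norm in general, so a finite net is not available. Instead I would use linearity in $F$ up to a Lipschitz remainder. Write $\partial_1\ell(F(x),y)=\partial_1\ell(0,y)+R_F(x,y)$ with $|R_F|\le L\kappa M$. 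The term $\P_n[\partial_1\ell(0,y)\1_A]$ converges uniformly in $A$ by Glivenko--Cantelli applied to a VC class multiplied by an $L^1$ envelope. For the remainder I would fall back on the structure $F(x)=\langle F,\Phi(x)\rangle$: functions in the $\mathcal{H}$-ball are equicontinuous on the compact $[0,1]^p$ by~\eqref{eq:bound-uniform-continuity} and the uniform continuity of $k$. By Arzel\`a--Ascoli the ball is then relatively compact in $C([0,1]^p)$, and a finite sup-norm net plus the Lipschitz bound gives uniformity in $F$.

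\textbf{Main obstacle: small leaves.} The ratios $\P_n[\cdots\1_A]/\P_n(A)$ blow up when $\P(A)$ is small. I would split $\xi$ according to whether $\min_v\P(A_v^\xi)\ge\delta$. On the good set, continuity of the map from empirical averages to $c_v$ gives uniform convergence. On the bad set, I would use Cauchy--Schwarz together with~\eqref{eq:cond-expectation-bounded} to bound the squared leaf means by $\P_n[|\partial_1\ell|^2\1_A]/\P_n(A)$, which is uniformly controlled by $2(\sup_x\mathbb{E}[|\partial_1\ell(0,Y)|^2\mid X=x]+L^2\kappa^2M^2)$ asymptotically. The density is bounded by $K^{|\mathscr{T}_{d-1}|}$. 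Under $\mathrm{Beta}(\alpha,\alpha)$ splits, $Q_0(\text{bad})\to0$ as $\delta\to0$. Making the bad-set bound uniform in the empirical measure is the delicate step. I would try first to dominate $\P_n$-leaf averages of $\partial_1\ell(0,y)^2$ via a uniform LLN for the conditional second moment, falling back on the argument of \cite[Theorem~2.10]{DD24}.
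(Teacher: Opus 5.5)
The reduction of $\norm{\mathcal{T}_n(F)-\mathcal{T}(F)}_{\mathcal{H}}^2$ to an $L^2(Q_0)$ bound on the leaf coefficients matches the paper, and so does your derivation of (ii) from Corollary~\ref{cor:stochastic-perturbation}. Handling the numerators with a finite sup-norm net is a legitimate and more elementary substitute for the paper's Lemma~\ref{lem:Donsker}(ii). Since $k$ is continuous on the compact $[0,1]^p\times[0,1]^p$, the $\mathcal{H}$-ball is relatively compact in $C([0,1]^p)$, so drop your earlier sentence saying no net exists.

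The genuine gap is the step you flag yourself: the small leaves. The route you propose first cannot work. No bound on $\P_n[\partial_1\ell(0,y)^2\1_A]/\P_n(A)$ holds uniformly over rectangles of small $\P$-mass, even asymptotically. A rectangle isolating a single sample point $x_i$ gives the value $\partial_1\ell(0,y_i)^2$. Moreover $\max_{i\le n}\partial_1\ell(0,y_i)^2\to\infty$ a.s.\ whenever $\partial_1\ell(0,Y)$ is unbounded, e.g.\ least squares with Gaussian noise, which satisfies the hypotheses of the theorem. Your bound $2(\sup_x\mathbb{E}[\abs{\partial_1\ell(0,Y)}^2\mid X=x]+L^2\kappa^2M^2)$ does hold in the limit for each fixed $\xi$. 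But the rank from which it holds depends on $\xi$ and degenerates precisely on the bad set. Fatou goes the wrong way here, so this does not give $\lim_{\delta\to0}\limsup_n\int_{\mathrm{bad}}\cdots\,\rmd Q_0=0$.

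What is missing is an envelope valid for all $n$ simultaneously and integrable in $\bar\xi$; this is the content of Lemma~\ref{lem:uniform-integrability}.

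\begin{itemize}
\item Fix a cell $A$ with $\P(A)>0$. The leaf mean $\P_n[\abs{\partial_1\ell(0,y)}\1_A]/\P_n(A)$ is the running average of the i.i.d.\ values $\abs{\partial_1\ell(0,Y_i)}$ over the successive indices with $X_i\in A$. These values have the law of $\abs{\partial_1\ell(0,Y)}$ under $\P(\cdot\mid X\in A)$.
\item Running averages of i.i.d.\ variables form a reverse martingale. Doob's $L^2$ inequality then gives $\mathbb{E}[\sup_n(\cdot)^2]\le 4\P[\abs{\partial_1\ell(0,y)}^2\1_A]/\P(A)\le 4\sup_x\mathbb{E}[\abs{\partial_1\ell(0,Y)}^2\mid X=x]$. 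This is exactly where the conditional-moment hypothesis enters: it makes the bound uniform in the cell.
\item Combine this with $\abs{\Psi_1(\xi^1,v,F,\P_n)}\le\P_n[\abs{\partial_1\ell(0,y)}\1_{A}]/\P_n(A)+L\kappa M$ for $A=A_v^{\xi^1}$, and with $\Psi_2\le K^{2^d-1}$. Fubini then yields $\mathbb{E}\int\sup_n\sup_{\norm{F}_{\mathcal{H}}\le M}\Psi(\bar\xi,v,F,\P_n)^2\,Q_0^K(\rmd\bar\xi)\nu(\rmd v)<\infty$, hence an a.s.\ integrable envelope.
\item Dominated convergence, together with convergence for each fixed $\bar\xi$ uniformly in $F$, then finishes the proof without any good/bad split.
\end{itemize}

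Three smaller points:
\begin{itemize}
\item $\rmd Q_{n,F}/\rmd Q_0(\xi)$ is an integral over the $K-1$ auxiliary candidate schemes, not a function of finitely many scores. Either lift to $\mathscr{S}^K$ as the paper does, or use bounded convergence.
\item On your good set, the candidate cells generated by $\xi^2,\dots,\xi^K$ can still be tiny. This is harmless only because scores, unlike leaf means, converge uniformly over all rectangles, thanks to $\P_n[\varphi\1_A]^2/\P_n(A)\le\P_n[\varphi^2\1_A]$ with $\varphi(x,y)=\partial_1\ell(F(x),y)$. You should say so.
\item $Q_0(\min_v\P(A_v^\xi)<\delta)$ need not tend to $0$ when $X$ does not charge all of $[0,1]^p$. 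The bad set should therefore be $\{0<\min_v\P(A_v^\xi)<\delta\}$, with null cells handled by the convention $0/0=0$.
\end{itemize}
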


In the least square regression and binary classification settings introduced in Section~\ref{sec:framework}, the almost sure convergence of initial positions $F^n_0 \stackrel{a.s.}{\longrightarrow} F_0$ is satisfied, as can be seen by a direct application of the law of large numbers.

\begin{remark}
 For the sake of brevity, we do not provide here a detailed description of the limit operator $\mathcal{T}:\mathcal{H}\to\mathcal{H}$. This definition is given in the section devoted to proofs, see Lemma~\ref{lem:RKHS-rf} below. More intuitively, as discussed in Definition~2.9 of \cite{DD24}, one can define a population softmax gradient tree $T(z;F,\xi)$ and a population gradient forest $\mathcal{T}(z;F)$ by replacing the empirical measure $\P_n$ by the population measure $\P$ in the formulas defining the finite sample versions. In particular, the mean residual $\bar r_n(A)$ becomes
    \[
    \bar r(A)=-\frac{\P[\partial_1\ell(F(x),y)\1_{A}(x)]}{\P(A)}
    \]
    and the score $\Delta_n$ of the split $A=A_0\cup A_1$ defined by Equation~\eqref{eq:score} becomes
    \[
    \Delta=\frac{\P[\partial_1\ell(F(x),y)\1_{A_0}(x)]^2}{\P(A_0)}+\frac{\P[\partial_1\ell(F(x),y)\1_{A_1}(x)]^2}{\P(A_1)}.
    \]
 \end{remark}

 \begin{remark}
   Since the reproducing kernel $k$ associated with $\mathcal{H}$ is
   continuous and bounded, by~\eqref{eq:bound-uniform-norm} the
   embedding of $\mathcal{H}$ into
   $(C^0([0,1]^p,\mathbb{R}), \norm{\cdot}_{\infty})$, the space of
   continuous functions endowed with the uniform norm, is continuous.
   Therefore, the almost sure convergence also holds in
   $C^1([0,+\infty), C^0([0,1]^p,\mathbb{R}))$.
\end{remark}

\subsubsection{Functional central limit theorem}
To analyze the fluctuations of infinitesimal gradient boosting, we need to work in a larger function space and introduce a family of  RKHSs extending $\mathcal{H}$.  For $\gamma\geq 0$, we define the weight $w_\gamma(A)$ of a hypercube $A\subset [0,1]^p$ by
\begin{equation}\label{eq:def-w-gamma}
w_\gamma(A) =
\begin{cases}
\P(A)^{-\gamma}, & \text{if $\P(A)>0$},\\
1, & \text{if $\P(A)=0$}.
\end{cases}
\end{equation}
This function is non-decreasing with respect to $\gamma$ and satisfies $w_0(A)=1$ for all $A$. We also define the weight of a splitting scheme $\xi$ by
\[
w_\gamma(\xi)=\max_{v\in\{0,1\}^d} w_\gamma(A_v^\xi).
\]
We finally define the critical exponent 
\begin{equation}\label{eq:gamma-max}
\gamma_{\max}
=
\sup\Big\{\gamma\geq 0 \,:\, \int w_\gamma(\xi)\, Q_0(\mathrm{d}\xi)<\infty\Big\}\in[0,+\infty].
\end{equation}
This critical exponent depends on the distribution $Q_0$ of the splitting scheme (and hence on the parameter $\alpha>0$ of the Beta distribution) as well as on the data distribution $\P$. Roughly speaking, it quantifies the integrability of $1/\P(A)$, where $A$ denotes a typical leaf in the random partition generated according to the splitting scheme distribution $Q_0$.

In the following, we provide a functional central limit theorem for the fluctuations of the vector field $\mathcal{T}_n$ around its limit $\mathcal{T}$ and then deduce a functional central limit theorem for the fluctuations of the infinitesimal gradient boosting processes $F^n$ around its limit $F$.

\begin{theorem}\label{thm:clt-igb}
  In addition to \Cref{ass:loss}, assume
  that~\eqref{eq:cond-expectation-bounded} holds. If
  $\gamma_{\max}>1$, then there exists an RKHS
  $\widetilde{\mathcal{H}}$ with continuous and bounded reproducing
  kernel $\tilde k$ such that
\begin{itemize}
    \item[-] $\mathcal{H}\subset \widetilde{\mathcal{H}}$ and $\|F\|_{\widetilde{\mathcal{H}}}\leq \|F\|_{\mathcal{H}}$ for all $F\in\mathcal{H}$;
    \item[-] $\mathcal{H}$ is dense in $\widetilde{\mathcal{H}}$,
\end{itemize}
and the following properties hold: 
\begin{itemize}
\item[(i)] The fluctuations of the vector fields satisfy a  weak convergence
\[
\big(\sqrt{n}\,(\mathcal{T}_n(F) - \mathcal{T}(F))\big)_{F\in\mathcal{H}}
\;\stackrel{d}{\longrightarrow}\;
(\mathscr{W}(F))_{F\in\mathcal{H}}
\]
in the space $C_b^{0}(\mathcal{H}, \widetilde{\mathcal{H}})$ with a Gaussian limit process $\mathscr{W}$.
\item[(ii)] It the initial conditions satisfy
\begin{equation}\label{eq:igb-initialization-normal}
F_0^n=\P_n[f_0(y)]+o_{\P}(n^{-1/2}), \qquad \text{as $n\to\infty$},
\end{equation}
 with $f_0$ such that $\P[f_0(y)^2]<\infty$, then the fluctuations of the infinitesimal gradient boosting processes satisfy the weak convergence
\[
\big(\sqrt{n}(F^n_t - F_t)\big)_{t\ge 0}
\;\stackrel{d}{\longrightarrow}\;
(\mathscr{F}_t)_{t\ge 0}
\qquad\text{as } n\to\infty,
\]
in the space $C^1([0,\infty), \widetilde{\mathcal{H}})$ with a Gaussian limit process $\mathscr{F}$. The limit process is equal in distribution to the unique solution of the differential equation in $\widetilde{\mathcal{H}}$
\begin{equation}\label{eq:thm-gaussian-ode}
\frac{\mathrm{d}\mathscr{F}_t}{\mathrm{d}t}
=
\big(\mathrm{d}_{F_t}\mathcal{T}\big)(\mathscr{F}_t)
+\mathscr{W}(F_t),
\qquad t\ge 0,
\end{equation}
with initial condition $\mathscr{F}_0$ following a centered Gaussian distribution with variance $\P[f_0(y)^2]-\P[f_0(y)]^2$.
\end{itemize}
\end{theorem}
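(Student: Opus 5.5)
The plan is to reduce part~(ii) to Corollary~\ref{cor:stochastic-perturbation}(ii) with $E=\mathcal{H}$ and $\mathcal{E}=\widetilde{\mathcal{H}}$. This leaves three tasks: construct $\widetilde{\mathcal{H}}$, prove the operator CLT~(i), and check $\mathcal{T}\in C^1_{\widetilde{\mathcal{H}}}(\mathcal{H},\mathcal{H})$.

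\textbf{Construction of $\widetilde{\mathcal{H}}$.} For $\gamma\in(1,\gamma_{\max})$, define the kernel
\[
\tilde k(z,z')=c_\gamma^{-1}\int\sum_{v}w_\gamma(A_v^\xi)\,\1_{A_v^\xi}(z)\1_{A_v^\xi}(z')\,Q_0(\mathrm{d}\xi),
\]
possibly with a smaller exponent in place of $\gamma$ to keep $\tilde k$ bounded. The weights $w_\gamma\ge 1$ make this kernel dominate $k$. For a weaker norm one wants $\tilde k - k$ positive semidefinite, so that $\mathcal{H}\hookrightarrow\widetilde{\mathcal{H}}$ contractively. The normalisation must be chosen so that the integrability $\int w_\gamma\,\mathrm{d}Q_0<\infty$ gives boundedness, and continuity should follow as in \citet[Proposition~8]{DDD25}. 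Density of $\mathcal{H}$ follows from the two kernels generating the same span of leaf indicators, that is, from $\mathcal{H}$ containing the functions $\int \phi(\xi)\1_{A_v^\xi}\,Q_0(\mathrm{d}\xi)$ for a dense class of $\phi$.

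The role of the weights is this. Linearising the ratio $\P_n[\partial_1\ell(F)\1_A]/\P_n(A)$ produces terms like $\P(A)^{-1}(\P_n-\P)(\1_A)$, whose variance is of order $\P(A)^{-1}$. The same singular factors appear when the Radon--Nikodym density of $Q_{n,F}$ (Proposition~\ref{prop:app-RN-splitting-scheme}) is differentiated through the scores $\Delta$. The enlarged norm absorbs exactly these factors.

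\textbf{Proof of (i).} Write $\mathcal{T}_n(F)-\mathcal{T}(F)$ as a functional of $\P_n-\P$, expanded to first order. The linear part is $\mathbb{G}_n:=\sqrt n(\P_n-\P)$ applied to an explicit $\widetilde{\mathcal{H}}$-valued influence function $\psi_F(x,y)$, built from $\partial_1\ell(F(x),y)$, leaf indicators, conditional leaf means and score derivatives, integrated against $Q_{F}$. The remainder is quadratic in $\P_n-\P$ over leaves and must be shown to be $o_\P(n^{-1/2})$ uniformly over bounded sets of $\mathcal{H}$ in the $\widetilde{\mathcal{H}}$-norm; this is where $\gamma>1$ is needed rather than only finite variance. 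Weak convergence in $C^0_b(\mathcal{H},\widetilde{\mathcal{H}})$ then requires:
\begin{itemize}
\item finite-dimensional convergence, by the Hilbert-space CLT, using~\eqref{eq:cond-expectation-bounded} and $\gamma>1$ for square integrability of $\psi_F$;
\item asymptotic equicontinuity on bounded sets.
\end{itemize}
Equicontinuity comes from a Donsker property of the class $\{\langle \psi_F,e\rangle : \|F\|_{\mathcal{H}}\le M,\ \|e\|_{\widetilde{\mathcal{H}}}\le 1\}$. Hypercube indicators form a VC class, $F\mapsto\partial_1\ell(F(x),y)$ is Lipschitz in $\|\cdot\|_\infty$ with constant $L$, and bounded sets of $\mathcal{H}$ are relatively compact in $\|\cdot\|_\infty$ (by the Arzelà--Ascoli theorem and~\eqref{eq:bound-uniform-continuity}). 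These facts give bracketing or uniform entropy bounds, in the spirit of Lemma~\ref{lem:Donsker}. The conditional second moment bound~\eqref{eq:cond-expectation-bounded} replaces boundedness of $\partial_1\ell(0,y)$.

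\textbf{Differentiability and conclusion.} Show that $F\mapsto\mathcal{T}(F)$ is Fréchet differentiable on $\mathcal{H}$, differentiating both $\partial_1\ell(F(x),y)$ (via $\partial_1^2\ell$) and the softmax density in $F$. Then show that $\mathrm{d}_F\mathcal{T}$ is bounded for the $\widetilde{\mathcal{H}}$-norm on both sides, with continuity in $F$. The step $\widetilde{\mathcal{H}}\to\widetilde{\mathcal{H}}$ holds because $\mathrm{d}_F\mathcal{T}(G)$ depends on $G$ only through integrals $\P[\cdot\,G\1_A]$. These are controlled by $\|G\|_{L^2(\P)}$-type quantities, which $\|G\|_{\widetilde{\mathcal{H}}}$ dominates.

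For the initial condition, \eqref{eq:igb-initialization-normal} gives $\sqrt n(F^n_0-F_0)=\mathbb{G}_n f_0+o_\P(1)$, a constant function. Joint convergence of $(\sqrt n(F_0^n-F_0),\sqrt n(\mathcal{T}_n-\mathcal{T}))$ follows by adding $f_0$ to the Donsker class. Corollary~\ref{cor:stochastic-perturbation}(ii) then yields~\eqref{eq:thm-gaussian-ode}.

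\textbf{Main obstacle.} I expect the hardest step to be the uniform negligibility of the remainder in~(i) near small leaves, where $\P_n(A)/\P(A)$ is not close to $1$. I would first split on the event $\{\P(A)\ge n^{-1+\epsilon}\}$ and use relative VC deviation bounds $\sup_A|\P_n(A)-\P(A)|/\sqrt{\P(A)}=O_\P(\sqrt{\log n/n})$. Leaves with $\P(A)< n^{-1+\epsilon}$ would be bounded crudely, using that $\int w_\gamma\,\mathrm{d}Q_0<\infty$ with $\gamma>1$ makes their $Q_0$-mass $o(n^{-1/2})$ after weighting.
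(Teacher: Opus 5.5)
Your part (ii) and your construction of $\widetilde{\mathcal H}$ follow the paper. You reduce to Corollary~\ref{cor:stochastic-perturbation}(ii) and get joint convergence by adjoining $f_0$ to the Donsker class. You obtain $\mathcal T\in C^1_{\widetilde{\mathcal H}}(\mathcal H,\mathcal H)$ because $\mathrm{d}_F\mathcal T(G)$ only involves $\P[\partial_1^2\ell(F(x),y)G(x)\1_A(x)]$ and $\|G\|_\infty\le\tilde\kappa\|G\|_{\widetilde{\mathcal H}}$. You use a $w_\gamma$-weighted tree kernel with $\tilde k-k\succeq0$, with density obtained by truncating the weight. Take $c_\gamma=1$: boundedness comes from $\int w_\gamma\,\mathrm{d}Q_0<\infty$, and a normalisation $c_\gamma>1$ can break $\tilde k-k\succeq 0$. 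The paper's weight differs from yours. It is $\max_{i\in\mathcal I}\P(A_i^{\bar\xi})^{-\gamma}$ over \emph{all} cells entering $\Psi$ (including candidate daughters from $\xi^2,\dots,\xi^K$), under $Q_0^K$. Its admissible range $\gamma\in(2-\gamma_{\max},\gamma_{\max})$ contains your $(1,\gamma_{\max})$.

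Part (i) is where you depart, and the step that carries the weight is not supported. You derive tightness in $C^0_b(B,\widetilde{\mathcal H})$ from a Donsker property of $\{\langle\psi_F,e\rangle_{\widetilde{\mathcal H}}:F\in B,\ \|e\|_{\widetilde{\mathcal H}}\le1\}$, but the facts you cite do not yield it.
\begin{itemize}
\item VC-ness of boxes and sup-norm compactness of $B$ say nothing about the index $e$, which ranges over an infinite-dimensional Hilbert ball.
\item Compactness of $B$ is not an integrable entropy bound.
\item The envelope carries the factor $\P(A_v^\xi)^{-1}$.
\end{itemize}
The paper never indexes by $e$. The scalar class $\mathcal G$ of functions $\partial_1\ell(F(x),y)\1_A(x)$ and $\1_A(x)$ is Donsker (Lemma~\ref{lem:Donsker}(ii), via Marcus's theorem and Lipschitz permanence). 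The linear term is $L_F(\mathbb G_n)$, where $L_F:\ell^\infty(\mathcal G)\to\widetilde{\mathcal H}_\gamma$ is linear and bounded uniformly in $F\in B$. Its norm is controlled by $\mathrm{card}(\mathcal I)\,\|w_{1-\gamma/2}\|_{L^2(Q_0^K\otimes\nu)}$, which is finite since $2-\gamma<\gamma_{\max}$ (Lemma~\ref{lem:role-H-gamma}). A Skorokhod representation of $\mathbb G_n\to\mathbb G_\P$ in $\ell^\infty(\mathcal G)$, followed by continuous mapping, then gives the convergence.

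The same device removes what you call the main obstacle. The paper truncates at $\varepsilon_n=n^{-\delta}$ with $\delta<1/2$.
\begin{itemize}
\item On regular schemes, $\sqrt n\,\P(A_i^{\bar\xi})\to\infty$. The expansion error is therefore $o(1)/\P(A_i^{\bar\xi})$ uniformly (Lemma~\ref{lem:tech-P}) and is absorbed by the same bounded map.
\item On irregular schemes nothing is linearised. The second-moment bound on $\sup_F|\Psi(\bar\xi,v,F,\P_n)|$ (backward martingale plus Doob, Lemma~\ref{lem:uniform-integrability}), times the factor $(\min_i\P(A_i^{\bar\xi}))^\gamma$, gives $n\varepsilon_n^{\gamma+\gamma'}\to0$ whenever $\gamma+\gamma'>2$.
\end{itemize}
This is exactly where the max over $\mathcal I$ matters. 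With your per-leaf weight, a scheme with a tiny candidate cell but a large leaf $A_v^{\xi^1}$ receives no damping. When $\gamma_{\max}\le2$ this forces a truncation level below $n^{-1/2}$, and hence relative-deviation bounds. Those bounds hold for $\P_n(A)$. They are not available off the shelf for the numerators $\P_n[\partial_1\ell(F(x),y)\1_A(x)]$, whose envelope is only conditionally square integrable.

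Your per-leaf weight does suffice for the linear term, contrary to your remark about the scores. The score derivatives $2\P[\varphi_{A,F}]/\P(A)$ and $-(\P[\varphi_{A,F}]/\P(A))^2$ are bounded under~\eqref{eq:cond-expectation-bounded}. So candidate cells contribute a factor $\P(A)^{-1}$ only at second order.
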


The joint distribution of $(\mathscr{F}_0,\mathscr{W})$ is difficult to describe explicitly; this Gaussian element arises through computations in the spirit of the functional delta method. Moreover, we will state during the proof that the map $\mathcal{T} : \mathcal{H} \to \mathcal{H}$ belongs to
$C^1_{\widetilde{\mathcal{H}}}(\mathcal{H},\mathcal{H})$, with the
notation of \Cref{prop:second-order-perturbation}, so that its differential
$\mathrm{d}_{ F}\mathcal{T}$ extends continuously to vectors in $\widetilde{\mathcal{H}}$. This guarantees that the differential equation~\eqref{eq:thm-gaussian-ode} is well-defined in
$\widetilde{\mathcal{H}}$ and thus is a linear ODE with continuous time-dependent coefficients. The existence and uniqueness of a global solution defined for all $t \geq 0$ are granted by the linear Picard--Lindelöf Theorem.

\begin{remark} We did not include the explicit definition of
  $\widetilde{\mathcal{H}}$ in Theorem~\ref{thm:clt-igb} since it is
  rather technical. The precise definition of
  $\widetilde{\mathcal{H}}$ -- more precisely, of a family of RKHSs
  $(\widetilde{\mathcal{H}}_\gamma)_{\gamma\in [0,\gamma_{\max})}$ -- is
  given in Lemma~\ref{lem:RKHS-H-gamma} in the section devoted to
  proofs. Note that, because the reproducing kernel $\tilde{k}$ is
  bounded and continuous, the embedding
  $\widetilde{\mathcal{H}} \hookrightarrow (C^0([0,1]^p,\mathbb{R}), \norm{\cdot}_{\infty})$ is
  continuous. As a consequence, the convergence stated in point~$(i)$
  also holds in the space
  $C_b^{0}\big(\mathcal{H}, C^0([0,1]^p,\mathbb{R})\big)$, and
  similarly the convergence in point~$(ii)$ holds in the space
  $C^1\big([0,\infty), C^0([0,1]^p,\mathbb{R})\big)$.
\end{remark}

\begin{remark}
For the two standard settings of regression and binary classification, we can check that the initialization~\eqref{eq:igb-initialization} satisfies the asymptotic expansion~\eqref{eq:igb-initialization-normal}. For least-squares regression, the empirical risk is minimized by the empirical mean $F_0^n=\P_n[y]$, 
so that~\eqref{eq:igb-initialization-normal} holds trivially with $f_0(y)=y$. For binary classification with cross-entropy loss, the minimizing constant is
\[
F_0^n=\log\!\left(\frac{\P_n(y)}{1-\P_n(y)}\right).
\]
By the delta method applied with $g(u)=\log(u/(1-u))$ at  $p=\P(y)$,
\begin{align*}
F_0^n &= g(\P_n(y))= g(p)+g'(p)\big(\P_n(y)-p\big)+o_{\P}(n^{-1/2})\\
&= \P_n[f_0(y)]+o_{\P}(n^{-1/2}),
\end{align*}
with
\[
f_0(y)=\log\!\left(\frac{p}{1-p}\right)+\frac{y-p}{p(1-p)}.
\]
Hence Equation~\eqref{eq:igb-initialization-normal} holds also in the case of binary classification.
\end{remark}

\begin{remark}
The condition $\gamma_{\max}>1$ imposes an integrability requirement
on $1/\P(A)$, where $A$ denotes a typical leaf in the random partition
generated according to the splitting scheme distribution $Q_0$. When
$\P(A)$ is bounded from below by $\varepsilon |A|$, where $|A|$
denotes the Lebesgue measure of $A$ -- for instance when $X$ is
uniformly distributed on $[0,1]^p$ or admits a density bounded away
from zero -- one can show that $\gamma_{\max}=\alpha$, the parameter of the Beta distribution defining $Q_0$.

Indeed, in the case $\P(A)=|A|$, each binary split multiplies the volume of the parent cell by an independent random factor with distribution $\mathrm{Beta}(\alpha,\alpha)$. As a result, the volume of a typical leaf has the same distribution as
\[
V=\prod_{l=1}^d \beta_l,
\qquad
\beta_1,\ldots,\beta_d \stackrel{i.i.d.}{\sim} \mathrm{Beta}(\alpha,\alpha),
\]
and a straightforward calculation shows that $\mathbb{E}[V^{-\gamma}]<\infty$ if and only if $\gamma<\alpha$.
Intuitively, larger values of $\alpha$ favor more balanced splits, which prevent the creation of very small leaves and thus improve the integrability of $1/\P(A)$.
\end{remark}

\begin{remark}
As discussed in the previous remark, when $X$ admits a density bounded from below, one has $\gamma_{\max}=\alpha$, and therefore Theorem~\ref{thm:clt-igb} holds for $\alpha>1$ only. It is natural to ask what happens when $\alpha\leq 1$, and in particular in the canonical case $\alpha=1$, which corresponds to split locations uniformly distributed on $(0,1)$. Unfortunately, our proof does not extend to this regime.

Nevertheless, the case of totally random trees, corresponding to $\beta=0$, can still be treated. In this setting, the Radon--Nikodym derivative of the splitting scheme distribution reduces to the constant $1$ (that is, $\Psi_2\equiv 1$ in Lemma~\ref{lem:tech-P}), which substantially simplifies the analysis. This allows one to adapt the proof and establish convergence in a reproducing kernel Hilbert space associated with an unbounded kernel. More precisely, when $\alpha>1/2$ and $\gamma\in(2-\alpha,1+\alpha)$, we can work with the RKHS $\widetilde{\mathcal{H}}_\gamma$ with reproducing kernel
\[
\tilde{k}_\gamma(z,z')
=
\int \sum_{v\in\{0,1\}^d}
w_\gamma(A_v^\xi)\,
\1_{A_v^\xi}(z)\1_{A_v^\xi}(z')\,
Q_0(\mathrm{d}\xi),
\qquad z,z'\in[0,1]^p.
\]
Similarly to \cite{DDD25}, for $\gamma\leq \alpha$ one can show that this kernel is unbounded but satisfies $\mathrm{P}[\tilde{k}_\gamma(x,x)]<\infty$, so that functions in  $\widetilde{\mathcal{H}}_\gamma$ are square integrable with respect to $\P$. Details are omitted for the sake of brevity.
\end{remark}

\section{Proofs}\label{sec:proofs}

\subsection{Preliminaries on empirical process theory}

Our approach uses empirical process theory and, in particular, the notions of Glivenko--Cantelli and Donsker classes. The Glivenko--Cantelli property is used to establish almost sure convergence and laws of large numbers, whereas the Donsker property allows one to study fluctuations around the limit and obtain (functional) central limit theorems.  In our analysis of the large-sample behavior of kernel gradient flow and infinitesimal gradient boosting, the following lemma plays a crucial role for the asymptotic analysis of the underlying vector fields. 

Following \citet[Section~19.2]{vdV98}, a class $\mathcal{C}$ of measurable functions $\phi:(x,y)\mapsto \phi(x,y)$ is said to be $\mathrm{P}$\emph{-Glivenko--Cantelli} if
\[
\sup_{\phi\in\mathcal{C}} 
\big| \mathrm{P}_n[\phi] - \mathrm{P}[\phi] \big|
\stackrel{*-a.s.}{\longrightarrow} 0,
\qquad n\to\infty,
\]
where $\mathrm{P}_n = n^{-1}\sum_{i=1}^n \delta_{(X_i,Y_i)}$ is the empirical measure associated with an i.i.d.\ sample $(X_i,Y_i)_{1\le i\le n}$ drawn from $\mathrm{P}$.  
The convergence $\stackrel{*-a.s.}{\longrightarrow}$ denotes almost sure convergence in \emph{outer} probability, a notion introduced to handle the potential non-measurability of the supremum.

A class $\mathcal{C}$ is said to be $\mathrm{P}$\emph{-Donsker} if the empirical process
\[
\mathbb{G}_n f 
= \sqrt{n}\,\big(\mathrm{P}_n[f] - \mathrm{P}[f]\big),
\qquad f\in\mathcal{C},
\]
converges in outer distribution in $\ell^\infty(\mathcal{C})$ to a tight limit process $\mathbb{G}_P$.  The limit $\mathbb{G}_P$ is necessarily a $\mathrm{P}$-Brownian bridge, that is, a tight centered Gaussian process in $\ell^\infty(\mathcal{C})$ with covariance
\[
\mathbb{E}\big[\mathbb{G}_P\phi_1 \,\mathbb{G}_P\phi_2\big]
= \mathrm{P}[\phi_1\phi_2] - \mathrm{P}[\phi_1]\,\mathrm{P}[\phi_2].
\]
We refer to \citet[Chapters~1.2 and~1.3]{vW96} for details on outer probability and convergence in outer distribution. Note that a $\mathrm{P}$-Donsker class is automatically $\mathrm{P}$-Glivenko--Cantelli.

\begin{lemma}\label{lem:Donsker} ~
\begin{itemize}
\item[(i)] In the kernel gradient flow framework, assume furthermore that~\eqref{eq:phi0-bounded} holds and let $\mathcal{X}$ be a second countable topological space and $\mathcal{H}$ be an RKHS with bounded continuous kernel on $\mathcal{X}\times \mathcal{X}$. Then, for all $M>0$, the class of functions
\[
\mathcal{C}_M
= \big\{(x,y)\mapsto \partial_1\ell(F(x),y) \, G(x)
:\ \|F\|_{\mathcal{H}}\le M,\; \|G\|_{\mathcal{H}}\le 1 \big\}
\]
is $\mathrm{P}$-Donsker and therefore $\mathrm{P}$-Glivenko--Cantelli.

\item[(ii)]
In the boosting framework, assume that $\mathcal{X}=[0,1]^p$, and let $\mathcal{R}$ denote the class of hyperrectangles of the form $A=[a,b\rangle$ for some $a,b\in[0,1]^p$ with $a\le b$.  
Then, for all $M>0$, the class of functions
\[
\mathcal{C}_M'
= \big\{(x,y)\mapsto \partial_1\ell(F(x),y)\,\mathds{1}_A(x)
:\ \|F\|_{\mathcal{H}}\le M,\; A\in\mathcal{R}\big\}
\]
is $\mathrm{P}$-Donsker and therefore $\mathrm{P}$-Glivenko--Cantelli.
\end{itemize}
\end{lemma}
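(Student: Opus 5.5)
Both parts will be proved by reducing the classes to products of uniformly bounded Donsker classes with suitable envelopes, and then invoking the standard permanence properties of \citet[Section~2.10]{vW96}.

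\textbf{Part (i).} First, the class $\mathcal{G}=\{G:\|G\|_{\mathcal{H}}\le 1\}$ is $\mathrm{P}$-Donsker. Each $G(x)=\langle G,\Phi(x)\rangle_{\mathcal{H}}$, so $\mathcal{G}$ is the unit ball of linear functionals applied to the bounded random element $\Phi(x)$ of the separable Hilbert space $\mathcal{H}$. Separability holds because $\mathcal{X}$ is second countable and $k$ is continuous. The empirical process indexed by $\mathcal{G}$ is therefore $G\mapsto\langle G,\sqrt n(\mathrm{P}_n-\mathrm{P})\Phi\rangle_{\mathcal{H}}$. Since $\|\Phi(x)\|_{\mathcal{H}}\le\kappa$, the Hilbert space CLT gives $\sqrt n(\mathrm{P}_n-\mathrm{P})\Phi\Rightarrow$ a Gaussian element of $\mathcal{H}$, and the map $h\mapsto(\langle G,h\rangle)_{G\in\mathcal{G}}$ is an isometry into $\ell^\infty(\mathcal{G})$. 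This yields the Donsker property, and the same argument applies to the ball of radius $M$, $\mathcal{F}_M$.

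Second, write $\partial_1\ell(F(x),y)=\partial_1\ell(0,y)+\int_0^1\partial_1^2\ell(sF(x),y)\,ds\,F(x)$. Since $|F(x)|\le\kappa M$ and $|\partial_1^2\ell|\le L$, the map $(z,y)\mapsto\partial_1\ell(z,y)$ restricted to $|z|\le\kappa M$ is $L$-Lipschitz in $z$ and, by~\eqref{eq:phi0-bounded}, uniformly bounded. Define $\phi(z,y)=\partial_1\ell(z,y)$. Then
\[|\phi(F(x),y)-\phi(F'(x),y)|\le L|F(x)-F'(x)|,\]
so the class $\{\phi(F(x),y):F\in\mathcal{F}_M\}$ is a Lipschitz transformation of the Donsker class $\mathcal{F}_M$ together with the fixed bounded function $\partial_1\ell(0,y)$. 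By \citet[Theorem~2.10.6]{vW96} (Lipschitz transformations of Donsker classes), this class is Donsker. Finally, the product of two uniformly bounded Donsker classes is Donsker \citep[Example~2.10.8]{vW96}, which gives $\mathcal{C}_M$.

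The main obstacle is the measurability requirement in these permanence theorems. We plan to handle it by showing that the classes are pointwise measurable: restrict $F$ and $G$ to a countable dense subset of the balls, and use that evaluation is continuous in the $\mathcal{H}$-norm.

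\textbf{Part (ii).} Here $y$ may be unbounded, so the argument changes.
\begin{itemize}
\item The class $\{\mathds{1}_A:A\in\mathcal{R}\}$ is VC, hence Donsker.
\item For the first factor, use the same decomposition as in part (i): $\partial_1\ell(F(x),y)=\partial_1\ell(0,y)+R_F(x,y)$ with $|R_F|\le L\kappa M$. The family $\{R_F\}$ is uniformly bounded and Lipschitz in $F(x)$. It is therefore Donsker, provided $\mathcal{F}_M$ is Donsker; this holds by the Hilbert CLT argument of part (i), since $[0,1]^p$ is second countable and $k$ is bounded and continuous.
\item The products $R_F\mathds{1}_A$ form a product of uniformly bounded Donsker classes, hence are Donsker.
\item The class $\{\partial_1\ell(0,y)\mathds{1}_A(x)\}$ is a fixed square-integrable function times a VC class of indicators. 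It is Donsker by uniform entropy, with envelope $|\partial_1\ell(0,y)|\in L^2(\mathrm{P})$ by \Cref{ass:loss}.
\end{itemize}
A sum of two Donsker classes is Donsker, which gives $\mathcal{C}_M'$, and the Glivenko--Cantelli property follows as noted in the paper. The main care is in verifying the uniform entropy bound for the envelope-weighted VC class, which is standard via \citet[Theorem~2.5.2]{vW96}.
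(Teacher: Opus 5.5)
Your proposal is correct and has the same skeleton as the paper's proof. Both arguments use Donskerness of bounded RKHS balls, a decomposition around $\partial_1\ell(0,y)$, a Lipschitz transformation via \citet[Theorem~2.10.6]{vW96}, and products of uniformly bounded Donsker classes. In (ii), both then add a fixed $L^2(\P)$ function times the VC class of rectangle indicators.

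The one substantive difference is the base step. The paper cites \citet[Theorem~1.1]{M85}. You instead observe that $\sqrt n(\P_n-\P)G=\langle G,\sqrt n(\P_n-\P)\Phi\rangle_{\mathcal{H}}$. Hence the empirical process indexed by $\{\|G\|_{\mathcal{H}}\le M\}$ is the image of $\sqrt n(\P_n-\P)\Phi$ under a bounded linear map $\mathcal{H}\to\ell^\infty$. The Hilbert-space CLT applies because $\|\Phi(x)\|_{\mathcal{H}}\le\kappa$ and $\mathcal{H}$ is separable. Combined with the continuous mapping theorem, it gives the Donsker property with a tight Gaussian limit. This is more elementary and self-contained than the appeal to Marcus, and it uses exactly the hypotheses at hand.

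The remaining differences are cosmetic:
\begin{itemize}
\item In (i), you use \eqref{eq:phi0-bounded} to make $\{\partial_1\ell(F(x),y)\}$ uniformly bounded and take a single product. The paper instead splits off $\partial_1\ell(0,y)\,G(x)$.
\item In (ii), you check the uniform entropy bound for $\partial_1\ell(0,y)\1_A(x)$ by hand. The paper gets the same conclusion from VC-subgraph permanence (Lemma~2.6.18(vi) of \citet{vW96}).
\end{itemize}

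One detail needs tightening. Theorem~2.10.6 is stated for a fixed $\phi:\mathbb{R}^k\to\mathbb{R}$. So the dependence on $y$ must enter through a Donsker coordinate whose value determines $\partial_1\ell(\cdot,y)$. Using the fixed function $\partial_1\ell(0,y)$ as that coordinate, as you suggest, only works if $y\mapsto\partial_1\ell(0,y)$ is injective. That holds for the square loss and cross-entropy, but not for a general loss under \Cref{ass:loss}.

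The paper avoids this by adding the singleton class $\{s(y)\}$, with $s$ a bounded injective sigmoid, and setting $\phi(z,t)=\partial_1\ell(z,s^{-1}(t))$. The same fix applies to your class $\{R_F\}$ in (ii).
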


\begin{proof}
  In both cases, we have an embedding (which is of course a linear
  continuous mapping) from the RKHS $\mathcal{H}$ to the space of
  bounded continuous functions on $C^0_b(\mathcal{X},\mathbb{R})$.
  Furthermore by assumption, in both cases $\mathcal{H}$ is a
  separable Hilbert space and $\mathcal{X}$ is a second countable
  topological space, so the assumptions of \citet[Theorem~1.1]{M85}
  are satisfied, and this result implies that the unit ball of
  $\mathcal{H}$ is a Donsker class of functions (with respect to any
  probability measure). More generally, any bounded subset of
  $\mathcal{H}$ is Donsker as well. We then use permanence properties
  of Donsker and VC-classes under sums, products, and Lipschitz
  transformations (see \citealp[Section~2.10]{vW96}) to prove the two
  claims.

\emph{Proof of~(i).}
To apply the permanence properties, we use the decomposition
\[
\partial_1\ell(F(x),y)G(x)
= \big(\partial_1\ell(F(x),y)-\partial_1\ell(0,y)\big)G(x)
+ \partial_1\ell(0,y)\,G(x),
\]
where the factor
$|\big(\partial_1\ell(F(x),y)-\partial_1\ell(0,y)\big)|\leq L|F(x)|$
is uniformly bounded in the first term. We assume without loss of
generality that $M\geq 1$, and we introduce the following intermediary classes of functions:
\begin{align*}
  \mathcal{D}_1 &= \big\{(x,y)\mapsto F(x):\ \|F\|_{\mathcal{H}}\le M\big\},\\
  \mathcal{D}_2 &= \big\{(x,y)\mapsto \partial_1\ell(F(x),y):\ \|F\|_{\mathcal{H}}\le M\big\},\\
  \mathcal{D}_3 &= \big\{(x,y)\mapsto \partial_1\ell(F(x),y)-\partial_1\ell(0,y):\ \|F\|_{\mathcal{H}}\le M\big\}.
\end{align*}

By \citet{M85}, the class $\mathcal{D}_1$ is Donsker (the presence of
the redundant variable $y$ is harmless). To show that $\mathcal{D}_2$
is $\mathrm{P}$-Donsker, we can use Theorem~2.10.6 in~\cite{vW96}.
Indeed, this result applies if we can show that these maps are of the
form
\[
  \mathcal{D}_2 = \{(x,y) \mapsto \phi(f(x,y), g(x,y)) :\ (f,g)\in
  \mathcal{F}\times \mathcal{G}\},
\]
where $\mathcal{F}$ and $\mathcal{G}$ are $\mathrm{P}$-Donsker classes satisfying
\begin{equation}
  \label{eq:van-der-vaart-cond-1}
  \sup_{f\in \mathcal{F}\cup \mathcal{G}}\abs{\mathrm{P}[f]} < \infty
\end{equation}
and $\phi$ is such that there exists $L_\phi$ such that for all
$(x,y)\in \mathcal{X}\times \mathcal{Y}$ and
$f_1,f_2\in \mathcal{F}, g_1,g_2\in \mathcal{G}$, the Lipschitz condition
\begin{equation}
  \label{eq:phi-lipschitz-condition}
  \begin{aligned}
    &\abs{\phi(f_1(x,y), g_1(x,y)) - \phi(f_2(x,y), g_2(x,y))}^2  \\
    & \quad \leq L_\phi \abs{f_1(x,y)-f_2(x,y)} +  \abs{g_1(x,y)-g_2(x,y)}
  \end{aligned}
\end{equation}
holds. We let $\mathcal{F}=\mathcal{D}_1$ and
$\mathcal{G}=\{(x,y)\mapsto s(y)\}$, where $s:\mathbb{R} \to (0,1)$ is
any fixed sigmoid function, whose inverse we denote by $s^{-1}$. These
classes are Donsker (note that $\mathcal{G}$ contains a single bounded
function). Furthermore, by~\eqref{eq:bound-uniform-norm} and the
boundedness of the kernel, the maps in $\mathcal{D}_1$ are also
uniformly bounded, so~\eqref{eq:van-der-vaart-cond-1} is satisfied.
Finally, to obtain $\mathcal{D}_2$ from this construction, we must
take $\phi$ of the form $\phi(z,t)=\partial_1 \ell(z, s^{-1}(t))$,
for which~\eqref{eq:phi-lipschitz-condition} is satisfied under
Assumption~\ref{ass:loss} with $L_\phi=L^2$, since $(z,y)\mapsto \partial_1\ell(z,y)$ is
$L$-Lipschitz in its first argument, uniformly in $y\in \mathcal{Y}$.
To apply Theorem~2.10.6 in~\cite{vW96}, it only remains to check that
at least one map in $\mathcal{D}_2$ is square-integrable, which is
immediate using the assumption
$\mathrm{P}[\partial_1\ell(0,y)^2] < \infty$. Therefore
$\mathcal{D}_2$ is $\mathrm{P}$-Donsker.

The class $\mathcal{D}_3$ is Donsker because it is contained in $\mathcal{D}_2 - \mathcal{D}_2$ (pairwise differences), which is Donsker by Example~2.10.7 in \cite{vW96}.  
Likewise, the class $\mathcal{D}_3 \cdot \mathcal{D}_1$ of pairwise products is Donsker as the product of two uniformly bounded Donsker classes (Example~2.10.8 in \citealp{vW96}). Finally, $\mathcal{C}_M$ is included in the class $\mathcal{D}_3 \cdot \mathcal{D}_1 + \phi_0 \cdot \mathcal{D}_1$, where the function $\phi_0(x,y)=\partial_1\ell(0,y)$ is fixed. According to Equation~\eqref{eq:phi0-bounded}, $\varphi_0$ is assumed uniformly bounded so that the class $\phi_0 \cdot \mathcal{D}_1$ is Donsker, and finally the sum of the Donsker classes  $\mathcal{D}_3 \cdot \mathcal{D}_1$ and $\phi_0 \cdot \mathcal{D}_1$ is also Donsker.

\emph{Proof of~(ii).}
The proof is similar, except that $G(x)$ is replaced by $\mathds{1}_A(x)$.  
Let
\[
\mathcal{D}_4=\big\{(x,y)\mapsto \mathds{1}_A(x):\ A\in\mathcal{R}\big\}.
\]
Hyperrectangles form a VC-class, so $\mathcal{D}_4$ is
$\mathrm{P}$-Donsker and uniformly bounded. Thus
$\mathcal{D}_3\cdot\mathcal{D}_4$ is $\mathrm{P}$-Donsker. Since
multiplication by the fixed function $\phi_0$ preserves the
VC-subgraph property (Lemma~2.6.18(vi) in \citealp{vW96}), the class
$\phi_0\cdot\mathcal{D}_4$ is VC-subgraph and hence
$\mathrm{P}$-Donsker. Note that we did not need to
assume~\eqref{eq:phi0-bounded} here. Finally, $\mathcal{C}_M'$ is
included in
$\mathcal{D}_3\cdot\mathcal{D}_4 + \phi_0\cdot\mathcal{D}_4$, and is
therefore $\mathrm{P}$-Donsker.
\end{proof}

\subsection{Proofs related to Section~\ref{sec:ODE}}

Recall the notation from \ref{sec:ODE}, in particular the spaces
$C^0_b(E,E)$ and $C^1([0,+\infty),E)$. In the following proofs, we
will also use (with transparent notation) spaces of the form
$C^0([0,+\infty), E)$ or $C^1([0,T],E)$ for some finite $T$.

\begin{proof}[Proof of
  Proposition~\ref{prop:first-order-perturbation}]
  The proof is standard, but we include it for the sake of completeness.
Let $x$ be the solution of the differential equation~\eqref{eq:abstract-ODE}. 
We introduce the set of times of uniform convergence,
\[
\mathscr{C} = \{ T \ge 0 : x_n \to x \ \text{uniformly on } [0,T] \text{ as } n \to \infty \}.
\]
We aim to prove that $\mathscr{C} = [0,+\infty)$. 
To this end, we show that $\mathscr{C} \subset [0,+\infty)$ is a non-empty, open, and closed subset, which implies, by connectedness of $[0,+\infty)$, that $\mathscr{C} = [0,+\infty)$.
Clearly, $T = 0$ belongs to $\mathscr{C}$ because of the assumption $x_{0,n} \to x_0$, hence non-emptiness.

To show that $\mathscr{C}$ is both open and closed, it suffices to prove that for any nondecreasing sequence $T_m \to T_0$, with $T_m \in \mathscr{C}$ for all $m \ge 1$, there exists $\delta > 0$ such that $T_0 + \delta \in \mathscr{C}$.
Since $g$ is assumed to be complete, $x(t)$ is well defined for all $t \ge 0$.
In particular, we can define open balls $B_r = B(x(T_0), r)$ for $r > 0$ and consider
\[
M = \sup\{\|g_n(v)\|_E : v \in B_2, \ n \ge 1\},
\]
which is finite because $g_n \to g$ uniformly on $B_2$.
By continuity of $x$ and by the definition of $\mathscr{C}$, we may fix $m,n_0 \ge 1$ such that $T_m > T_0 - 1/(2M)$, $x(T_m) \in B_1$, and, for all $n \ge n_0$, $x_n(T_m) \in B_1$.
Because all solutions have derivative bounded by $M$ as long as they stay in $B_2$, they cannot exit $B_2$ starting from $B_1$ in a time shorter than $1/M$.
Therefore, for all $s \in [0,1/M]$ and $n \ge n_0$, we have $x_n(T_m+s), x(T_m+s) \in B_2$, so that
\begin{align*}
  &\|x_n(T_m+s) - x(T_m+s)\|_E - \|x_n(T_m) - x(T_m)\|_E \\
  &\le \int_{T_m}^{T_m+s} \|g_n(x_n(u)) - g(x(u))\|_E \, \mathrm{d}u \\
  &\le \int_{T_m}^{T_m+s} \big( \|g_n(x_n(u)) - g(x_n(u))\|_E + \|g(x_n(u)) - g(x(u))\|_E \big) \, \mathrm{d}u \\
  &\le s \sup_{v \in B_2} \|g_n(v) - g(v)\|_E + \sup_{v \in B_2} \|\mathrm{d}_v g\|_{\mathrm{op}} \int_{T_m}^{T_m+s} \|x_n(u) - x(u)\|_E \, \mathrm{d}u.
\end{align*}
By Grönwall’s lemma, we deduce
\begin{align*}
   \|x_n(T_m+s) - x(T_m+s)\|_E 
   &\le \Big( \|x_n(T_m) - x(T_m)\|_E 
   + s \sup_{v \in B_2} \|g_n(v) - g(v)\|_E \Big) \\
   &\quad \times \exp\!\Big( s \sup_{v \in B_2} \|\mathrm{d}_v g\|_{\mathrm{op}} \Big),
\end{align*}
which tends to $0$ uniformly in $s \in [0,1/M]$.
Because we chose $T_m > T_0 - 1/(2M)$, this shows that $T_0 + 1/(2M) \in \mathscr{C}$, concluding that $\mathscr{C} = [0,+\infty)$.

So far, we have proven the uniform convergence $x_n \to x$ on compact sets. 
Since $\dot{x}_n = g_n \circ x_n$ and $g_n \to g$ uniformly on bounded sets, we readily deduce the uniform convergence 
$\dot{x}_n \to g \circ x = \dot{x}$ on compact sets, hence $x_n \to x$ in $C^1([0,+\infty), E)$.
\end{proof}

\begin{proof}[Proof of Proposition~\ref{prop:second-order-perturbation}]
We first prove that the sequence $y_n = \sqrt{n}(x_n - x)$, $n \ge 1$, is relatively compact in $C^0([0,+\infty), \mathcal{E})$.
It is enough to show relative compactness in $C^0([0,T], \mathcal{E})$ for every $T > 0$. 
To this aim, note that
\begin{align*}
\|\dot{y}_n(t)\|_{\mathcal{E}} 
&= \sqrt{n} \, \| g_n(x_n(t)) - g(x(t)) \|_{\mathcal{E}} \\
&\le \sqrt{n} \, \| g_n(x_n(t)) - g(x_n(t)) \|_{\mathcal{E}}
+ \sqrt{n} \, \| g(x_n(t)) - g(x(t)) \|_{\mathcal{E}}.
\end{align*}
Since $x_n \to x$ uniformly on $[0,T]$, there exists a compact (hence bounded) set $B \subset E$ containing $x_n(t)$ and $x(t)$ for all $t \in [0,T]$ and $n \ge 1$.
We can then bound
\[
  \|\dot{y}_n(t)\|_{\mathcal{E}} 
  \le \sup_{v \in B, n \ge 1} \| \sqrt{n}(g_n(v) - g(v)) \|_{\mathcal{E}}
  + \Big( \sup_{v \in B} \|\mathrm{d}_v g\|_{\mathrm{op}} \Big) \|y_n(t)\|_{\mathcal{E}},
\]
where both suprema are finite under our assumptions (the second because $B$ is compact).
Grönwall’s lemma then implies that $\|\dot{y}_n(t)\|_{\mathcal{E}}$ remains bounded for all $n \ge 1$ and $t \in [0,T]$. 
Hence, the sequence $(y_n)_{n \ge 1}$ is equicontinuous on $[0,T]$. 
Since the initial values $y_{n,0}$ are convergent and thus relatively compact, the Arzelà--Ascoli theorem implies that $(y_n)_{n \ge 1}$ is relatively compact in $C^0([0,T], \mathcal{E})$.

To prove uniqueness of the limit point in $C^0([0,+\infty), \mathcal{E})$, assume that $(y_n)$ converges to $y$ along a subsequence $\{n'\} \subset \{n\}$. 
From the equality
\begin{align*}
\dot{y}_n(t) 
&= \sqrt{n}\big( g_n(x_n(t)) - g(x(t)) \big) \\
&= \sqrt{n}(g_n - g)(x_n(t)) 
+ \sqrt{n}\big( g(x_n(t)) - g(x(t)) \big) \\
&= \sqrt{n}(g_n - g)(x_n(t)) 
+ \int_0^1 (\mathrm{d}_{x(t) + u(x_n(t) - x(t))} g)(y_n(t)) \, \mathrm{d}u,
\end{align*}
the uniform convergences $\sqrt{n}(g_n - g) \to h$ on $B$ and $x_n \to x$, together with the continuity of $x \mapsto \mathrm{d}_x g$, imply that $\dot{y}_n(t)$ converges uniformly on $[0,T]$ along the subsequence $\{n'\}$ to 
\[
(\mathrm{d}_{x(t)} g)(y(t)) + h(x(t)).
\]
As a consequence, the limit function $y$ is continuously differentiable with derivative
\[
  \dot{y}(t) = (\mathrm{d}_{x(t)} g)(y(t)) + h(x(t)).
\]
The convergence $y_n \to y$ thus holds in $C^1([0,T], \mathcal{E})$ along the subsequence $\{n'\}$.
The existence and uniqueness of the solution to this differential equation with initial condition $y(0) = y_0$ imply uniqueness of the limit point $y$, hence the convergence of the entire sequence $y_n \to y$ in $C^1([0,T], \mathcal{E})$.
Since $T$ is arbitrary, the proof is complete.
\end{proof}

\begin{proof}[Proof of Corollary~\ref{cor:stochastic-perturbation}] The law of large numbers stated in point~$(i)$ is a direct application of Proposition~\ref{prop:first-order-perturbation}. Let $\Omega'\subset\Omega$ denote the subset on which the convergence $(X_{0,n},G_n)\stackrel{a.s.}\to (x_0,g)$ holds; by assumption, $\mathbb{P}(\Omega')=1$. For each fixed $\omega\in\Omega'$, we can apply the deterministic statement of Proposition~\ref{prop:first-order-perturbation} and deduce the convergence $X_n(\omega)\to x$. This holds for all $\omega\in\Omega'$, which implies almost sure convergence.

 The functional central limit theorem stated in point~$(ii)$ is proved similarly by a direct application of Proposition~\ref{prop:second-order-perturbation}. We appeal to a Skorokhod representation in the Banach space $\mathcal{E} \times C_b^0(E,\mathcal{E})$ to represent the convergence in distribution
\[
\big(\sqrt{n}(X_{0,n}-x_0),\,\sqrt{n}(G_n-g)\big)
\xrightarrow{d}
(\mathscr{X}_0,\mathscr{G})
\]
by an almost sure convergence on a suitable probability space. Then, for each fixed $\omega$ in a subset of probability~$1$, we apply Proposition~\ref{prop:second-order-perturbation} to deduce the almost sure convergence
\[
\mathscr{X}_n := \sqrt{n}(X_n-x) \xrightarrow{a.s.} \mathscr{X},
\]
which implies convergence in distribution on the original probability space. Finally, the fact that the limit $\mathscr{X}$ is Gaussian follows directly from the assumption that $(\mathscr{X}_0,\mathscr{G})$ is Gaussian and from the linearity of the differential equation~\eqref{eq:gaussian-ODE}.
\end{proof}

\subsection{Proofs related to Section~\ref{sec:CLT-KGF}}
\begin{proof}[Proof of Lemma~\ref{lem:gradient-field-regularity}] 
We first establish that $\mathcal{L}_n$ and $\mathcal{L}$ defined in 
Equations~\eqref{eq:empirical-risk} and~\eqref{eq:population-risk} are twice 
continuously differentiable on $\mathcal{H}$. By the reproducing kernel 
property, the gradient of the evaluation map $F \mapsto F(x)$, $x\in\mathcal{X}$, 
is given by the feature map $\Phi(x) = k(x,\cdot) \in \mathcal{H}$, since 
$F(x) = \langle \Phi(x), F \rangle_{\mathcal{H}}$.  
As a consequence, and by Assumption~\ref{ass:loss}, for each fixed $(x,y)$ the map 
$F \mapsto \ell(F(x),y)$ is twice continuously differentiable with gradient
\[
\nabla_F\,\ell(F(x),y) = \partial_1\ell(F(x),y)\,\Phi(x)
\]
and Hessian
\[
\nabla_F^2\,\ell(F(x),y)
= \partial_1^2\ell(F(x),y)\,\Phi(x)\otimes \Phi(x),
\]
where $\Phi(x)\otimes \Phi(x)\colon \mathcal{H}\to\mathcal{H}$ is the rank-one 
operator
\[
G \mapsto \langle \Phi(x), G \rangle_{\mathcal{H}}\,\Phi(x) = G(x)\,\Phi(x).
\]

The operator norm of the Hessian at $F$ is bounded by
\[
\|\partial_1^2\ell(F(x),y)\,\Phi(x)\otimes \Phi(x)\|_{\mathrm{op}}
\leq L\,\|\Phi(x)\otimes \Phi(x)\|_{\mathrm{op}}
= L\,\|\Phi(x)\|_{\mathcal{H}}^2
= L\,\kappa,
\]
where $L$ is the uniform bound on $\partial_1^2\ell$ from Assumption~\ref{ass:loss} and $\kappa$ the uniform bound on $\|\Phi(x)\|_{\mathcal{H}}=\sqrt{k(x,x)}$ from Assumption~\ref{ass:kernel}.
Since this bound is 
independent of $F$ and integrable with respect to both $\mathrm{P}_n$ and 
$\mathrm{P}$, we deduce that $\mathcal{L}_n$ and $\mathcal{L}$ are twice 
continuously differentiable with
\[
\nabla_F\mathcal{L}_n= \mathrm{P}_n[\partial_1\ell(F(x),y)\,\Phi(x)], 
\qquad
\nabla^2_F\mathcal{L}_n= \mathrm{P}_n[\partial_1^2\ell(F(x),y)\,\Phi(x)\otimes \Phi(x)],
\]
and similarly for $\mathcal{L}$ with $\mathrm{P}_n$ replaced by $\mathrm{P}$.

\textit{Proof of $(i)$.} 
This follows directly from the fact that $\mathcal{L}_n$ and $\mathcal{L}$ are twice continuously differentiable on $\mathcal{H}$ with Hessians uniformly bounded in operator norm: 
\[
  \|\nabla^2_F\mathcal{L}_n\|_{\mathrm{op}} \leq L\,\mathrm{P}_n[k(x,x)], \quad \|\nabla^2_F\mathcal{L}\|_{\mathrm{op}} \leq L\,\mathrm{P}[k(x,x)],
\]
for all $F\in\mathcal{H}$.

\textit{Proof of $(ii)$.} 
Point~$(i)$ implies that the gradient vector fields 
$(\nabla_F \mathcal{L}_n)_{F\in\mathcal{H}}$ and $(\nabla_F \mathcal{L})_{F\in\mathcal{H}}$ 
are complete, hence the flows $(F_t^n)_{t\ge 0}$ and $(F_t)_{t\ge 0}$ are well defined 
for all $t\ge 0$. It remains to check that, for $n\ge 1$, $F^n$ depends continuously on the training sample. Let $(x_i,y_i)_{1\le i\le n}$ and $(x_i',y_i')_{1\le i\le n}$ be two training samples, 
and let $\mathrm{P}_n$ and $\mathrm{P}_n'$ denote the corresponding empirical measures, $\mathcal{L}_n$ and $\mathcal{L}_n'$ the corresponding empirical risks, and $(F_t^n)_{t\geq 0}$ and $({F_t^n}')_{t\geq 0}$ the corresponding gradient flows.
Then, for any $F\in\mathcal{H}$,
\begin{align*}
\|\nabla_F \mathcal{L}_n - \nabla_F \mathcal{L}_n'\|_{\mathcal{H}}
&= \big\|\mathrm{P}_n[\partial_1\ell(F(x),y)\,\Phi(x)] 
      - \mathrm{P}_n'[\partial_1\ell(F(x'),y')\,\Phi(x')]\big\|_{\mathcal{H}} \\
&\leq \frac{1}{n}\sum_{i=1}^n 
\big\|\partial_1\ell(F(x_i),y_i)\,\Phi(x_i) 
- \partial_1\ell(F(x_i'),y_i')\,\Phi(x_i')\big\|_{\mathcal{H}}.
\end{align*}
Let $M>0$ be fixed; Equation~\ref{eq:bound-uniform-continuity} implies that
\[
  \sup_{\norm{F}_{\mathcal{H}}\leq M}\max_{1\leq i\leq n}\norm{F(x_i) - F(x_i')}_{\mathcal{H}} \leq M \max_{1\leq i\leq n} (k(x_i,x_i) + k(x_i', x_i') - 2k(x_i, x_i'))
\]
which tends to $0$ as $(x_i')_{i=1}^n \to (x_i)_{i=1}^n$ by continuity of $k$. Since $\partial_1 \ell$ is supposed uniformly continuous by Assumption~\ref{ass:loss}, it follows
\[
  \sup_{\norm{F}_{\mathcal{H}}\leq M}\max_{1\leq i \leq n}\abs[\big]{\partial_1\ell(F(x_i),y_i) - \partial_1\ell(F(x_i'),y_i')} \to 0,
\]
as $(x_i')_{i=1}^n \to (x_i)_{i=1}^n$. Likewise, \[
  \max_{1\leq i\leq n}\norm{\Phi(x_i) - \Phi(x_i')}_{\mathcal{H}} = \max_{1\leq i\leq n} (k(x_i,x_i) + k(x_i', x_i') - 2k(x_i, x_i'))\to 0.
\]
The last two computations therefore show that
\[
  \sup_{\norm{F}_{\mathcal{H}}\leq M} \|\nabla_F \mathcal{L}_n - \nabla_F \mathcal{L}_n'\|_{\mathcal{H}} \to 0,
\]
and Proposition~\ref{prop:first-order-perturbation} implies that $({F^n_t}')_{t\geq 0} \to (F^n_t)_{t\geq 0}$ in $C^1([0, +\infty), \mathcal{H})$, which concludes the proof.
\end{proof}

\begin{proof}[Proof of Theorem~\ref{thm:CLT-gradient-flow}] 
The theorem is an application of Corollary~\ref{cor:stochastic-perturbation}, and we need to check the assumptions of this corollary. 
Here the ambient space is $E=\mathcal{E}=\mathcal{H}$, the vector fields are $G_n=\nabla \mathcal{L}_n$ and $g=\nabla \mathcal{L}$, and the initial positions are $X_{0,n}=x_0=0$.

\textit{Proof of $(i)$.} 
We must check that
\[
\nabla \mathcal{L}_n \xrightarrow{a.s.}\nabla \mathcal{L} 
\quad \text{in } C_b^0(\mathcal{H},\mathcal{H}).
\]
By Lemma~\ref{lem:gradient-field-regularity}, the vector fields belong to $C_b^0(\mathcal{H},\mathcal{H})$, and it remains to prove that
\begin{equation}\label{eq:uniform-convergence}
\sup_{\|F\|_\mathcal{H}\leq M} 
\big\|\nabla_F \mathcal{L}_n - \nabla_F \mathcal{L}\big\|_\mathcal{H}
\xrightarrow{a.s.} 0
\end{equation}
for all $M\ge 0$. Note that the supremum is measurable because $\mathcal{H}$ is separable and the vector fields are continuous.

To prove~\eqref{eq:uniform-convergence}, observe that the left-hand side may be written as 
\begin{equation*} 
  \sup_{\phi \in \mathcal{C}_M} \big|\,\mathrm{P}_n[\phi(x,y)] - \mathrm{P}[\phi(x,y)]\,\big|,
\end{equation*}
where $\mathcal{C}_M$ is the class of functions introduced in Lemma~\ref{lem:Donsker}.
Since this class is Glivenko--Cantelli, the empirical process 
$(\mathrm{P}_n(\phi))_{\phi\in\mathcal{C}_M}$, converges $*$-a.s.\ to 
$(\mathrm{P}(\phi))_{\phi\in\mathcal{C}_M}$ in $\ell^\infty(\mathcal{C}_M)$. 
Hence the supremum above converges to $0$ $*$-a.s., proving 
\eqref{eq:uniform-convergence} and completing the proof of~$(i)$.

\textit{Proof of $(ii)$.}
We must check that $\nabla \mathcal{L} \in C^1(\mathcal{H},\mathcal{H})$ 
(which follows from Lemma~\ref{lem:gradient-field-regularity}), and that
\begin{equation}\label{eq:weak-convergence}
\sqrt{n}\big(\nabla \mathcal{L}_n - \nabla \mathcal{L}\big)
\xrightarrow{d} \mathcal{G}
\quad \text{in } C_b^0(\mathcal{H},\mathcal{H}),
\end{equation}
for a centered Gaussian limit $\mathcal{G}$. 
The left-hand side is indeed in $C_b^0(\mathcal{H},\mathcal{H})$. 
For $\|F\|_{\mathcal{H}}\le M$ and $\|G\|_{\mathcal{H}}\le 1$, we have
\[
\big\langle 
\sqrt{n}\big(\nabla_F \mathcal{L}_n - \nabla_F \mathcal{L}\big), 
G 
\big\rangle_{\mathcal{H}}
= \sqrt{n}\,(\mathrm{P}_n - \mathrm{P})(\phi),
\]
where 
\[
\phi(x,y) = \partial_1\ell(F(x),y)\,G(x)
\]
belongs to the class $\mathcal{C}_M$. By Lemma~\ref{lem:Donsker}, the class $\mathcal{C}_M$ is Donsker, so the empirical process  
$(\sqrt{n}(\mathrm{P}_n-\mathrm{P})(\phi))_{\phi\in\mathcal{C}_M}$, converges (in outer distribution) to the tight centered Gaussian limit $(\mathbb{G}_{\mathrm{P}}(\phi))_{\phi\in\mathcal{C}_M}$ in $\ell^\infty(\mathcal{C}_M)$, with covariance structure given by
\[
  \mathrm{Cov}(\mathbb{G}_{\mathrm{P}}(\phi), \mathbb{G}_{\mathrm{P}}(\phi')) = \mathrm{P}[\phi(x,y)\phi'(x,y)] - \mathrm{P}[\phi(x,y)]\mathrm{P}[\phi'(x,y)],
\]
see~\citet[Section~19.2]{vdV98}.
This yields the weak convergence~\eqref{eq:weak-convergence} in outer distribution, and, since $C^0_b(\mathcal{H},\mathcal{H})$ is a Polish space, convergence in distribution (in the usual sense) follows.
Furthermore, we get the following covariance structure for $\mathcal{G}$:
\begin{align*}
  \mathrm{Cov}(\langle\mathcal{G}(F), G\rangle_{\mathcal{H}}, \langle\mathcal{G}(F'), G'\rangle_{\mathcal{H}}) 
  &= \mathrm{Cov}(\mathbb{G}_{\P}(\phi), \mathbb{G}_{\P}(\phi'))\\
  &= \mathrm{P}[\partial_1\ell(F(x),y)\partial_1\ell(F'(x),y)G(x)G'(x)] \\
  & \quad - \mathrm{P}[\partial_1\ell(F(x),y)G(x)]\mathrm{P}[\partial_1\ell(F'(x),y)G'(x)],
\end{align*}
with obvious notation for $\phi$ and $\phi'$.
In other words, the Gaussian family $(\mathcal{G}(F))_{F\in \mathcal{H}}$ has the same covariance structure as $(\partial_1\ell(F(x),y)\Phi(x))_{F\in \mathcal{H}}$ under the probability measure $\P$.

Equation~\eqref{eq:limit-ODE}, with $\mathscr{G}_t := \mathcal{G}(F_t)$, can be deduced from~\eqref{eq:gaussian-ODE}, since the  operator $A_t$ defined in~\eqref{eq:At} is precisely the Fréchet derivative at $F_t$ of $\nabla\mathcal{L}$, that is $\nabla^2_{F_t}\mathcal{L}$.
Furthermore, the computations above show that the covariance structure of the process $(\mathscr{G}_t)_{t\geq 0}$ is the same as that of $(\partial_1\ell(F_t(x),y)\Phi(x))_{t\geq 0}$,
which concludes the proof.
\end{proof}

\begin{proof}[Proof of Theorem~\ref{thm:KGF-linear-case}]
  The proof of $(i)$ is a consequence of the law of large numbers
  (LLN) and the central limit theorem (CLT) for square-integrable
  random Hilbert-valued random variables, see Theorem~7.9 combined
  with Theorem~10.5 in \citet{LT91}. Indeed, the space
  $(\mathcal{H} \otimes \mathcal{H})\times \mathcal{H}$ is a separable
  Hilbert space and the pair $(B_n, C_n)$, defined
  in~\eqref{eq:B_n-C_n-def}, is simply the empirical mean of i.i.d.\
  random variables
  $(\Phi(x_i)\otimes \Phi(x_i), y_i\Phi(x_i))_{i=1}^n$. Furthermore, by Assumption~\ref{ass:kernel},
  \[
  \norm{\Phi(x)\otimes\Phi(x)}_{\mathrm{HS}} =
  \norm{\Phi(x)}_{\mathcal{H}}^2 =k(x,x)\leq \kappa^2,
  \]
  and we have also
  \[
    \mathrm{P}[\norm{y\Phi(x)}_{\mathcal{H}}^2] =
    \mathrm{P}[y^2k(x,x)] \leq \kappa^2 \mathrm{P}[y^2],
  \]
  which is finite by Assumption~\ref{ass:loss} -- note that, for the square loss $\ell(z,y)=\frac{1}{2}(z-y)^2$, we have $\partial_1 \ell(0,y)=y$. This implies that the
  $(\Phi(x_i)\otimes \Phi(x_i), y_i\Phi(x_i))_{i\geq 1}$ are i.i.d.\
  and square-integrable random variables with values in a separable
  Hilbert space, so the LLN and CLT apply and we get directly $(i)$.

  In order to prove $(ii)$, we use the explicit form
  \[
    F^n_t = t \widetilde{\mathrm{exp}}(-tB_n) C_n = t \sum_{k\geq 0} \frac{(-tB_n)^k}{(k+1)!}C_n, \qquad t\geq 0,
  \]
  given in~\eqref{eq:least-squares-F_n-explicit}. Using Skorokhod's
  representation theorem, let us assume that the CLT convergence from
  $(i)$ holds almost surely. It is not too hard to see that the
  mapping
  $\widetilde{\mathrm{exp}}:\mathcal{H}\to \mathcal{H}, A\mapsto
  \sum_{k=0}^{\infty}\frac{A^k}{(k+1)!}$ is Fréchet differentiable
  with a derivative at $A$ that we denote by
  $d_A \widetilde{\mathrm{exp}}(\cdot)$ and that for all $T>0$, it
  holds a.s.\ uniformly in $t\in [0,T]$ that
  \[
    \sqrt{n}(F^n_t - F_t) \;\underset{n\to\infty}{\longrightarrow} \;
    \mathscr{F}_t \;:=\; t \widetilde{\mathrm{exp}}(-tB) \mathscr{C} -
    t^2 d_{(-tB)}\widetilde{\mathrm{exp}}(\mathscr{B})C.
  \]
  We now leverage this convergence in $C^0([0,\infty), \mathcal{H})$ and
  the fact that $\frac{d}{dt}F^n_t = -B_n F^n_t + C_n$ and $\frac{d}{dt}F_t = -B F_t + C$
  to deduce that
  similarly:
  \[
    \frac{d}{dt} \sqrt{n}(F^n_t - F_t) \;\underset{n\to\infty}{\longrightarrow} \;
    -B \mathscr{F}_t -\mathscr{B} F_t
    + \mathscr{C},
  \]
  uniformly for $t$ in compact sets. Therefore the convergence
  $\sqrt{n}(F^n - F) \to \mathscr{F}$ holds in
  $C^1([0,\infty), \mathcal{H})$ and we can identify the derivative
  $\frac{d}{dt} \mathscr{F}_t$ as the right-hand side above. This
  concludes the proof of $(ii)$.
\end{proof}

\subsection{Proofs related to Section~\ref{sec:CLT-IGB}}

\subsubsection{Preliminaries: splitting schemes and partitions}\label{sec:splitting-schemes}
We recall the formal definition of splitting schemes introduced in \citet[Section~2.2]{DD24a}.

The binary rooted tree with depth $d\geq 1$ (from graph theory) is defined on the vertex set $\mathscr{T}_d=\cup_{l=0}^d \{0,1\}^l$. A vertex $v\in \{0,1\}^l$ is   seen as a word of size $l$ in the letters $0$ and $1$. The empty word  $v=\emptyset$ corresponds to the tree root. The vertex set is divided into the internal nodes $v\in \mathscr{T}_{d-1}$ and the terminal nodes $v\in\{0,1\}^d$, also called leaves. Each internal node $v$ has two children denoted $v0$ and $v1$ (concatenation of words) while the terminal nodes have no offspring. 

A regression tree is encoded by a \textit{splitting scheme} 
\[
\xi=(j_v,u_v)_{v\in\mathscr{T}_{d-1}}\in ([\![1,p]\!]\times (0,1))^{\mathscr{T}_{d-1}}
\]
giving the splits at each internal node, and its leaf values $(\bar r_v)_{v\in\{0,1\}^d}$. The splitting scheme $\xi$ allows to associate to each vertex $v\in\mathscr{T}_{d}$ a region $A_v=A_v^\xi$ defined recursively by $A_\emptyset=[0,1]^p$ and, for $v\in\mathscr{T}_{d-1}$, 
\begin{equation} \label{eq:split-A0-A1}
  \begin{aligned}
    &A_{v0} = A_v \cap \{x\in[0,1]^p\ :\ x^{j_{v}} < a_v+u_v(b_v-a_v)\},\\
 &A_{v1} = A_v \cap \{x\in[0,1]^p\ :\ x^{j_{v}} \geq a_v+u_v(b_v-a_v)\},
  \end{aligned}
\end{equation}
with $a_v=\inf_{x\in A_v} x^{j_v}$ and $b_v=\sup_{x\in A_v} x^{j_v}$. Note that $A_v$ depends only on the splits attached to the ancestors of $v$. For each level $l=0,\ldots,d$,  $(A_v)_{v\in \{0,1\}^l}$ is a partition   of $[0,1]^p$ into $2^l$ hypercubes. The leaf values are then
\[
\bar r_n(A_v)=\frac{1}{n(A_v)}\sum_{i=1}^n r_i\1_{A_v}(x_i).
\]
Finally, the tree associated with the sample $(x_i,r_i)_{1\leq i\leq n}$ and  splitting scheme  $\xi=(j_v,u_v)_{v\in\mathscr{T}_{d-1}}$ is the piecewise constant function
\begin{equation}\label{eq:reg-tree}
T(x;(x_i,r_i)_{1\leq i\leq n},\xi)=\sum_{v\in\{0,1\}^d}\bar r_n(A_v)\1_{A_v}(x).
\end{equation}

For self-containedness, we provide the expression for the Radon--Nikodym derivative $\rmd Q_{n,F}/\rmd Q_0$ that was derived in \citet[Proposition~2.1]{DD24a}. The notation $\Delta_n(j,u;A)$ stands for the score of the binary split  $A=A_0\cup A_1$ resulting from the choice of covariate $j$ and threshold $u$ --- see Equation~\eqref{eq:split-A0-A1} --- and defined by  \[
  \Delta_n(j,u;A)= \frac{\P_n[\partial_1\ell(F(x),y)\1_{A_0}(x)]^2}{\P_n(A_0)}+\frac{\P_n[\partial_1\ell(F(x),y)\1_{A_1}(x)]^2}{\P_n(A_1)}.
\]

\begin{proposition}\label{prop:app-RN-splitting-scheme}
For all measurable bounded $F:[0,1]^p\to \mathbb{R}$, the splitting scheme distribution $Q_{n,F}$ is absolutely continuous with respect to $Q_0 $ with Radon-Nikodym derivative
\begin{equation}\label{eq:RN}
\frac{\rmd Q_{n,F}}{\rmd Q_0}(\xi)= 
\int \prod_{v\in \mathscr{T}_{d-1}}  \frac{\exp(\beta \Delta_n(j_v^1,u_v^1; A_v^\xi))}{K^{-1}\sum_{k=1}^K \exp(\beta \Delta_n(j_v^{k},u_v^{k}; A_v^\xi))} Q_0(\rmd \xi_2)\cdots Q_0(\rmd \xi_K),
\end{equation}
with $\xi_k=(j_v^k,u_v^k)_{v\in\mathscr{T}_{d-1}}$, $2\leq k\leq K$, and for $k=1$, we take $\xi_1=(j_v^1,u_v^1)_{v\in\mathscr{T}_{d-1}}=\xi$.
Note that the Radon-Nikodym derivative~\eqref{eq:RN} is bounded from above by $K^{2^d-1}$.
Similarly, $\frac{\rmd Q_{\infty,F}}{\rmd Q_0}(\xi)$ is obtained by replacing $\P_n$ by $\P$ in the definition of $\Delta_n$.
\end{proposition}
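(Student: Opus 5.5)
The statement is Proposition~\ref{prop:app-RN-splitting-scheme}, which describes the law $Q_{n,F}$ of the splitting scheme produced by the softmax tree. I would prove it by writing down the sampling mechanism node by node and computing the density of the selected split with respect to the reference distribution.

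\emph{Step 1: one node.} Fix an internal node $v$ and condition on its region $A_v$, which depends only on the splits at the ancestors of $v$. The $K$ candidates $\zeta^1,\dots,\zeta^K$ are i.i.d.\ with law $\mathcal{U}_\alpha$. The index $k$ is then chosen with probability $\exp(\beta\Delta^k)/\sum_l\exp(\beta\Delta^l)$. For a bounded measurable test function $\psi$ on $[\![1,p]\!]\times(0,1)$, the expectation of $\psi$ at the selected split is a sum over $k$. The $K$ terms are equal by exchangeability of the candidates, so we may relabel the selected candidate as candidate $1$. This gives
\[
\mathbb{E}[\psi(\xi_v)]=\int \psi(\zeta^1)\,\frac{K\exp(\beta\Delta_n(\zeta^1;A_v))}{\sum_{k}\exp(\beta\Delta_n(\zeta^k;A_v))}\,\mathcal{U}_\alpha^{\otimes K}(\rmd\zeta).
\]
Integrating out $\zeta^2,\dots,\zeta^K$ yields a one-node density with respect to $\mathcal{U}_\alpha$. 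For this we need measurability of $(j,u)\mapsto \Delta_n(j,u;A_v)$. It holds because, for fixed $F$ and sample, this map is piecewise constant in $u$ with finitely many pieces (with the convention $0/0=0$ for empty children). Boundedness of $F$ guarantees that all scores are finite.

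\emph{Step 2: the whole tree.} Recall that $Q_0=\mathcal{U}_\alpha^{\otimes\mathscr{T}_{d-1}}$. The tree is built node by node, with fresh independent candidates at each node. Hence the joint law of $\xi$ is a sequential composition of the kernels from Step 1. The density at each node depends on $\xi$ only through the ancestors of $v$ (via $A_v^\xi$) and through $\xi_v$ itself. Multiplying these conditional densities (chain rule) gives
\[
\frac{\rmd Q_{n,F}}{\rmd Q_0}(\xi)=\prod_{v\in\mathscr{T}_{d-1}}\int \frac{\exp(\beta\Delta_n(\xi_v;A_v^\xi))}{K^{-1}\sum_k\exp(\beta\Delta_n(\zeta_v^k;A_v^\xi))}\prod_{k\ge2}\mathcal{U}_\alpha(\rmd\zeta_v^k),
\]
with $\zeta_v^1=\xi_v$. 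The auxiliary variables at distinct nodes are integrated independently. The product of integrals can therefore be rewritten as a single integral over $\xi_2,\dots,\xi_K$, each drawn from $Q_0$ on the full index set $\mathscr{T}_{d-1}$. This is exactly \eqref{eq:RN}.

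\emph{Step 3: bound and population version.} Each node factor is at most $K$, because the denominator sum contains the $k=1$ term, which equals the numerator. Since there are $2^d-1$ internal nodes, the density is at most $K^{2^d-1}$. The population version $\rmd Q_{\infty,F}/\rmd Q_0$ follows from the identical argument with $\P$ in place of $\P_n$ in the score.

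The main obstacle is bookkeeping rather than analysis. One must justify the Fubini and exchangeability step rigorously, and check that conditioning on ancestors is legitimate. The latter holds because $A_v^\xi$ is a measurable function of the ancestor splits, so the procedure defines a genuine product of Markov kernels.
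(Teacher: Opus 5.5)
Your proposal is correct: exchangeability of the $K$ candidates gives the one-node density, and composing the node kernels (with every candidate scored on the selected region $A_v^\xi$) gives the product over $\mathscr{T}_{d-1}$; Tonelli then merges the per-node auxiliary integrals into a single integral against $Q_0^{K-1}$, and each factor is at most $K$, which yields \eqref{eq:RN} and the bound $K^{2^d-1}$. The paper itself does not prove this statement but imports it from \citet[Proposition~2.1]{DD24a}, so your argument is a self-contained derivation of that formula; the one detail to rephrase is the population case, where $u\mapsto\Delta(j,u;A)$ is no longer piecewise constant and its measurability instead follows from the monotonicity in $u$ of $\P[\partial_1\ell(F(x),y)^{\pm}\1_{A_0}(x)]$ and $\P(A_0)$ (or from dominated convergence).
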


\subsubsection{Preliminaries: an integral representation for $\mathcal{T}_n(F)$ and $\mathcal{T}(F)$}
As a preliminary step in our analysis, we establish an integral representation for the random forest predictors defined in Equations~\eqref{eq:def-rf}, and show that they belong to the RKHS $\mathcal{H}$ with kernel $k$ defined in Equation~\eqref{eq:RKHS-kernel}.

We begin with a integral representation of general elements of the RKHS. We denote by $\mathscr{S} = ([\![1,p]\!] \times (0,1))^{\mathscr{T}_{d-1}}$ the space of splitting schemes, equipped with the probability measure $Q_0$. Recall that $Q_0$ is the reference splitting-scheme distribution in the totally random case (see Definition~\ref{def:softmax-gradient-tree}). 
Because softmax selection involves $K$ random candidate splits at each node, we introduce $K$-tuples of splitting schemes $\bar\xi = (\xi^1,\ldots,\xi^K)$  and endow $\mathscr{S}^K$ with the product measure $Q_0^K$. We denote by $\nu$  the counting measure on $\{0,1\}^d$ and consider  $L^2(Q_0^K \otimes \nu)$, the space of measurable functions
\[
\Psi : (\bar{\xi},v)\longmapsto \Psi(\bar{\xi},v), 
\qquad \bar{\xi} \in \mathscr{S}^K,\; v\in\{0,1\}^d,
\]
that are square-integrable with respect to $Q_0^K \otimes \nu$.

\begin{lemma}\label{lem:RKHS-representation}
For every $\Psi \in L^2(Q_0^K \otimes \nu)$, the function
\[
\Phi(z) = \int \Psi(\bar{\xi},v)\,\1_{A_v^{\xi^1}}(z)\,Q_0^K(\mathrm{d}\bar{\xi})\,\nu(\mathrm{d}v),
\qquad z\in[0,1]^p,
\]
belongs to $\mathcal{H}$ and satisfies
\[
\|\Phi\|_{\mathcal{H}} \le \|\Psi\|_{L^2(Q_0^K\otimes \nu)}.
\]
Moreover, the linear operator $\Psi \mapsto \Phi$ is surjective, and its restriction to
\[
\mathring{\mathcal{H}}
=
\overline{\operatorname{span}\!\left\{
(\bar{\xi},v)\mapsto \1_{A_v^{\xi^1}}(z)
:\; z\in[0,1]^p
\right\}}^{L^2(Q_0^K\otimes \nu)}
\]
defines an isomorphism $\mathring{\mathcal{H}} \to \mathcal{H}$.
\end{lemma}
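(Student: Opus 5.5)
This is the standard feature-space description of an RKHS. Define the feature map $\Gamma:[0,1]^p\to L^2(Q_0^K\otimes\nu)$ by $\Gamma(z)(\bar\xi,v)=\1_{A_v^{\xi^1}}(z)$. First we identify $k$ as the Gram kernel of $\Gamma$. Since $\xi^1$ has marginal distribution $Q_0$ under $Q_0^K$, the other coordinates $\xi^2,\dots,\xi^K$ integrate out, and
\[
\langle \Gamma(z),\Gamma(z')\rangle_{L^2(Q_0^K\otimes\nu)}=\int\sum_{v\in\{0,1\}^d}\1_{A_v^{\xi^1}}(z)\1_{A_v^{\xi^1}}(z')\,Q_0^K(\rmd\bar\xi)=k(z,z').
\]
So $\Gamma$ is a feature map for the kernel $k$ defined in~\eqref{eq:RKHS-kernel}. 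The lemma then follows from the classical characterization of an RKHS as the image of its feature space, which we reprove directly.

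\textbf{The map and its image.} Let $S:\Psi\mapsto\Phi$, where $\Phi(z)=\langle\Psi,\Gamma(z)\rangle_{L^2}$. The integral defining $\Phi(z)$ is well defined because $\Gamma(z)\in L^2$. We decompose $L^2(Q_0^K\otimes\nu)=\mathring{\mathcal H}\oplus\mathring{\mathcal H}^\perp$. For $\Psi\in\mathring{\mathcal H}^\perp$ we have $S\Psi\equiv0$, since each $\Gamma(z)$ lies in $\mathring{\mathcal H}$. Hence $S\Psi=S(\Pi\Psi)$, where $\Pi$ is the orthogonal projection onto $\mathring{\mathcal H}$. This means $S$ is injective on $\mathring{\mathcal H}$: if $S\Psi=0$ with $\Psi\in\mathring{\mathcal H}$, then $\Psi$ is orthogonal to every $\Gamma(z)$ and hence to their closed span.

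We equip $S(\mathring{\mathcal H})$ with the transported norm $\|S\Psi\|:=\|\Psi\|_{L^2}$ for $\Psi\in\mathring{\mathcal H}$. This makes it a Hilbert space of functions. Point evaluation is continuous, since $|\Phi(z)|\le\|\Psi\|\,k(z,z)^{1/2}$. The reproducing property holds: $S\Gamma(z)=k(z,\cdot)$, and $\langle S\Psi,S\Gamma(z)\rangle=\langle\Psi,\Gamma(z)\rangle=\Phi(z)$. By uniqueness of the RKHS attached to $k$ (Aronszajn), this space equals $\mathcal H$ with the same norm. Therefore $S:\mathring{\mathcal H}\to\mathcal H$ is an isometric isomorphism, and $S$ is surjective from all of $L^2$.

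\textbf{The norm bound.} For general $\Psi$ we get $\|\Phi\|_{\mathcal H}=\|\Pi\Psi\|\le\|\Psi\|$.

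\textbf{Main obstacle.} The only delicate points are measurability, namely that $\Gamma(z)$ is jointly measurable in $(\bar\xi,v)$ and that Fubini applies in the kernel computation. Both are immediate because the indicators are bounded by $1$ and the leaves $A_v^{\xi}$ depend measurably on $\xi$.
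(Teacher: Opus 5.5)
Your proposal is correct and follows the paper's argument. The paper also notes that $k(z,z')=\int \1_{A_v^{\xi^1}}(z)\1_{A_v^{\xi^1}}(z')\,Q_0^K(\rmd\bar\xi)\,\nu(\rmd v)$ once $\xi^2,\dots,\xi^K$ are integrated out, so $z\mapsto \1_{A_v^{\xi^1}}(z)$ is a feature map into $L^2(Q_0^K\otimes\nu)$, and then invokes \citet[Theorem~2.7.7]{HE15}; your orthogonal decomposition with the transported norm and Aronszajn uniqueness is simply a self-contained proof of that cited theorem.
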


\begin{proof}[Proof of Lemma~\ref{lem:RKHS-representation}]
This follows directly from \cite[Theorem~2.7.7]{HE15}, since the kernel $k$ defined by Equation~\eqref{eq:RKHS-kernel} can be written as
\begin{align*}
k(z,z') &= \int \1_{A_v^\xi}(z)\1_{A_v^\xi}(z')\,Q_0(\mathrm{d}\xi)\,\nu(\mathrm{d}v)\\
&= \int \1_{A_v^{\xi^1}}(z)\1_{A_v^{\xi^1}}(z')\,Q_0^K(\mathrm{d}\bar{\xi})\,\nu(\mathrm{d}v),
\end{align*}
and $(\bar{\xi},v)\mapsto\1_{A_v^{\xi^1}}(z)$ belongs to $L^2(Q_0^K\otimes\nu)$ for every $z\in[0,1]^p$.
\end{proof}

We now provide explicit integral representations of $\mathcal{T}_n(F)$, showing that they belong to $\mathcal{H}$, and also define the limit $\mathcal{T}(F)$ thanks to a similar integral representation. Note that the following Lemma implies in particular Lemma~\ref{lem:RKHS}.

\begin{lemma}\label{lem:RKHS-rf}
\begin{enumerate}[(i)]
    \item For any measurable $F:[0,1]^p\to\mathbb{R}$ and any $n\ge 1$, $\mathcal{T}_n(F)\in\mathcal{H}$ and can be written as
    \[
    \mathcal{T}_n(F)(z)
    =
    \int \Psi(\bar{\xi},v,F,\P_n)\,\1_{A_v^{\xi^1}}(z)\,Q_0^K(\mathrm{d}\bar{\xi})\,\nu(\mathrm{d}v),
    \]
    with
    \begin{equation}\label{eq:def-Psi}
    \Psi(\bar{\xi},v,F,\P_n)
    = \Psi_1(\xi^1,v,F,\P_n)\Psi_2(\bar{\xi},F,\P_n),
    \end{equation}
    where
    \begin{equation}\label{eq:def-Psi1}
    \Psi_1(\xi^1,v,F,\P_n)
    = \frac{\P_n[\partial_1\ell(F(x),y)\1_{A_v^{\xi^1}}(x)]}{\P_n(A_v^{\xi^1})}
    \end{equation}
    and
    \begin{equation}\label{eq:def-Psi2}
    \Psi_2(\bar{\xi},F,\P_n)
    = \prod_{w\in \mathscr{T}_{d-1}}
      \frac{\exp\!\big(\beta\,\Delta_n(j_w^{1},u_w^{1}; A_w^{\xi^1})\big)}
           {K^{-1}\sum_{k=1}^K \exp\!\big(\beta\,\Delta_n(j_w^{k},u_w^{k}; A_w^{\xi^1})\big)}.
    \end{equation}
    We recall that $(j_w^k,u_w^k)$ denotes the split at node $w$ in the splitting scheme $\xi^k$ and $\Delta_n(j,u,A)$ denotes the score of the split $(j,u)$ of $A$ as defined in Equation~\eqref{eq:score}.
    \item In the infinite-population case, for any  $F\in L^2(\P)$, we define  $\mathcal{T}(F)\in\mathcal{H}$  by the formula
    \[
    \mathcal{T}(F)(z)
    =
    \int \Psi(\bar{\xi},v,F,\P)\,\1_{A_v^{\xi^1}}(z)\,Q_0^K(\mathrm{d}\bar{\xi})\,\nu(\mathrm{d}v),
    \]
    where $\Psi(\bar{\xi},v,F,\P)\in L^2(Q_0^K\otimes\nu)$ is defined by Equations~\eqref{eq:def-Psi},~\eqref{eq:def-Psi1},~\eqref{eq:def-Psi2} with $\P_n$ replaced by $\P$. 
\end{enumerate}
\end{lemma}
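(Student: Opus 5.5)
The plan is to obtain both parts as direct consequences of the integral representation of Lemma~\ref{lem:RKHS-representation}, applied with a suitable integrand $\Psi$. Concretely, I will show that $\Psi(\cdot,\cdot,F,\P_n)$ and $\Psi(\cdot,\cdot,F,\P)$ lie in $L^2(Q_0^K\otimes\nu)$. Membership in $\mathcal{H}$ then follows from the bound $\|\Phi\|_{\mathcal{H}}\le\|\Psi\|_{L^2}$. What remains is to identify the resulting function with $\mathcal{T}_n(F)$ as defined in~\eqref{eq:def-rf}.

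\emph{Step 1 (identification, finite sample).} Start from~\eqref{eq:def-rf} and rewrite the integral against $Q_{n,F}(\rmd\xi)$ as an integral against $Q_0(\rmd\xi)$, using the Radon--Nikodym derivative of Proposition~\ref{prop:app-RN-splitting-scheme}. That derivative is itself an integral over the auxiliary candidate schemes $\xi_2,\dots,\xi_K$ drawn from $Q_0$. Setting $\xi^1=\xi$ and applying Fubini, the double integral becomes a single integral against $Q_0^K(\rmd\bar\xi)$. Fubini is justified because everything is bounded: the density is at most $K^{2^d-1}$, and there are finitely many leaves. The integrand is then the product of the leaf value and the density, which is $\Psi_1\Psi_2$ times $\1_{A_v^{\xi^1}}(z)$. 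Summing over leaves gives the integral against the counting measure $\nu(\rmd v)$. The sign convention in~\eqref{eq:def-gradient-tree} has to be absorbed into $\Psi_1$; I will check that the minus sign is carried consistently. The convention $0/0=0$ handles empty leaves.

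\emph{Step 2 (square integrability).} For fixed $n$, $\Psi_2$ is bounded by $K^{2^d-1}$. In the finite-sample case $\Psi_1$ is also bounded, since $|\Psi_1|\le\max_i|\partial_1\ell(F(x_i),y_i)|$: it is an average of finitely many residuals over a nonempty leaf, and $0$ otherwise. Hence $\Psi\in L^\infty\subset L^2$, and part~(i) follows from Lemma~\ref{lem:RKHS-representation}. Note that no regularity of $F$ beyond measurability is needed here.

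\emph{Step 3 (population case).} Here $\Psi_1=\P[\partial_1\ell(F(x),y)\1_A(x)]/\P(A)$ need not be bounded, because $\P(A)$ can be small. This is the main obstacle. I will apply Cauchy--Schwarz, $\P[\phi\1_A]^2\le\P[\phi^2\1_A]\,\P(A)$, which gives $\Psi_1^2\le\P[\phi^2\1_A]/\P(A)$ with $\phi=\partial_1\ell(F(x),y)$. Summing over the leaves of a fixed $\xi^1$ and integrating against $Q_0$, the naive bound involves $1/\P(A)$ and is not obviously integrable. To avoid this, I will bound the square differently. By Jensen's inequality for the conditional mean on $A$,
\[
\P(A)\Psi_1^2\le \P[\phi^2\1_A],
\]
so $\sum_v \P(A_v)\Psi_1(v)^2\le \P[\phi^2]<\infty$. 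This is finite because $|\phi|\le|\partial_1\ell(0,y)|+L|F(x)|$ and $F\in L^2(\P)$. However, this weighted bound does not directly give $L^2(\nu)$ integrability. I will therefore also use condition~\eqref{eq:cond-expectation-bounded} to obtain $\P[\phi^2\1_A]\le C\P(A)+2L^2\P[F^2\1_A]$, hence $\Psi_1^2\le C+2L^2\P[F^2\1_A]/\P(A)$. If the $F^2$ term still fails to be integrable, I will settle for interpreting $\mathcal{T}(F)$ via the representation for $F\in\mathcal{H}$, where $F$ is bounded and $\Psi_1$ is bounded uniformly. For general $F\in L^2(\P)$ I expect integrability of this term to be precisely where care, or an extra assumption, is needed. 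Finally, $\Psi_2$ is again bounded by $K^{2^d-1}$, and the square integrability of $\Psi$ yields part~(ii).
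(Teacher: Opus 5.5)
Your part~(i) is the paper's argument. You pass from $Q_{n,F}$ to $Q_0$ via Proposition~\ref{prop:app-RN-splitting-scheme}, absorb the auxiliary candidates into $Q_0^K$ with $\xi^1=\xi$, and conclude by Lemma~\ref{lem:RKHS-representation}, since $\Psi_2\le K^{2^d-1}$ and $\Psi_1$ is bounded (the paper says ``takes finitely many values''). The sign you flagged is a real inconsistency. With \eqref{eq:def-gradient-tree} as written, the representation holds with $-\Psi_1$ in place of $\Psi_1$, and the paper's proof drops the minus sign too. This is irrelevant for membership in $\mathcal{H}$.

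For part~(ii) you do not reach the stated claim. However, the obstruction you isolated is genuine, and no argument can remove it: under Assumption~\ref{ass:loss} alone, (ii) is false. The paper only says Assumption~\ref{ass:loss} bounds $\Psi_1$. But $|\partial_1\ell(F(x),y)|\le|\partial_1\ell(0,y)|+L|F(x)|$ controls the integrand pointwise, not its averages over cells of small $\P$-mass.

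Here is a counterexample. Take $p=d=1$, $\alpha=1/2$, $X\sim\mathrm{Unif}[0,1]$, the square loss with $Y=X^{-3/10}$ (so $\P[y^2]<\infty$), and $F=0$. The left leaf is $[0,u)$ with $u\sim\pi:=\mathrm{Beta}(1/2,1/2)$. On it $\Psi_1=-\tfrac{10}{7}u^{-3/10}$, whose square is not $\pi$-integrable. Since the scores lie in $[0,\P[y^2]]$, $\Psi_2$ is bounded below by a positive constant, so $\Psi\notin L^2(Q_0^K\otimes\nu)$ for every $K$ and $\beta$. For $K=1$ one also gets $\mathcal{T}(0)\notin\mathcal{H}$. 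Indeed, elements of $\mathcal{H}$ are $z\mapsto\int(\psi_0(u)\1_{\{z<u\}}+\psi_1(u)\1_{\{z\ge u\}})\,\pi(\rmd u)$ with $\psi_0,\psi_1\in L^2(\pi)$, so their increments have $\pi$-density $\psi_1-\psi_0\in L^2(\pi)$. Here that density is $\pm(\Psi_1(\cdot,1)-\Psi_1(\cdot,0))\notin L^2(\pi)$, because $\Psi_1(\cdot,1)$ is bounded. Taking instead $Y\equiv 0$ and $F(x)=x^{-3/10}\in L^2(\P)$ shows that even \eqref{eq:cond-expectation-bounded} does not rescue unbounded $F$.

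So an extra hypothesis is unavoidable, and both of your lines supply one. Your fallback, \eqref{eq:cond-expectation-bounded} together with bounded $F$, gives $|\Psi_1|\le\sup_x\mathbb{E}[|\partial_1\ell(0,Y)|\mid X=x]+L\|F\|_\infty$. This covers every use of (ii) in the paper: $F\in\mathcal{H}$ or $F\in\widetilde{\mathcal{H}}_\gamma$, under the hypotheses of Theorems~\ref{thm:lln-igb} and~\ref{thm:clt-igb}. Alternatively, if $\gamma_{\max}>1$, your weighted Jensen bound closes the argument for every $F\in L^2(\P)$. With $\phi=\partial_1\ell(F(x),y)$,
\[
\sum_{v}\Psi_1(\xi,v,F,\P)^2\le w_1(\xi)\sum_{v}\P(A_v^\xi)\,\Psi_1(\xi,v,F,\P)^2\le w_1(\xi)\,\P[\phi^2],
\]
and $\int w_1(\xi)\,Q_0(\rmd\xi)<\infty$.
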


\begin{proof}[Proof of Lemma~\ref{lem:RKHS-rf}]
From Equation~\eqref{eq:def-rf} and the absolute continuity of $Q_{n,F}$ with respect to $Q_0$, we have
\[
\mathcal{T}_n(z;F)
=
\int
\frac{\P_n[\partial_1\ell(F(x),y)\1_{A_v^\xi}(x)]}{\P_n(A_v^\xi)}
\,\frac{\mathrm{d}Q_{n,F}}{\mathrm{d}Q_0}(\xi)\,
\1_{A_v^\xi}(z)\,Q_0(\mathrm{d}\xi)\,\nu(\mathrm{d}v).
\]
On the other hand, the Radon-Nikodym derivative is expressed, thanks to Proposition~\ref{prop:app-RN-splitting-scheme}, as
\[
\frac{\mathrm{d}Q_{n,F}}{\mathrm{d}Q_0}(\xi)
=
\int \Psi_2((\xi,\xi^2,\ldots,\xi^K),F,\P_n)\,Q_0^{K-1}(\mathrm{d}\xi^2,\ldots,\mathrm{d}\xi^K).
\]
Setting $\xi=\xi_1$, these two Equations together imply
\[
\mathcal{T}_n(z;F)
=
\int \Psi_1(\xi^1,v,F,\P_n)\Psi_2(\bar{\xi},F,\P_n)\,\1_{A_v^{\xi_1}}(z)\,Q_0^K(\mathrm{d}\bar{\xi})\,\nu(\mathrm{d}v).
\]
For fixed $F,\P_n$, the function $(\bar\xi,v)\mapsto \Psi(\bar\xi,v,F,\P_n)$ belongs to $L^2(Q_0^K\otimes\nu)$ because $\Psi_2$ is bounded (by $K^{2^d-1}$) and $(\bar\xi,v)\mapsto \Psi_1(\bar\xi,v,F,\P_n)$ takes finitely many values. Hence Lemma~\ref{lem:RKHS-representation} implies $\mathcal{T}_n(F)\in\mathcal{H}$. The proof for the infinite-population case follows the same argument, and we use  Assumption~\ref{ass:loss} to bound $(\bar\xi,v)\mapsto\Psi_1(\xi^1,v,F,\P)$.
\end{proof}

\subsubsection{Proofs related to Subsection~\ref{subsec:IGB-background}}
We now prove Lemma~\ref{lem:gradient-field-regularity-igb} stating that the vector fields
$\mathcal{T}_n : \mathcal{H} \to \mathcal{H}$, $n\ge 1$, 
are continuously differentiable and satisfy a linear growth condition. As a consequence, these vector fields are complete, and the infinitesimal gradient boosting processes
$(F_t^n)_{t\ge 0}$, $n\ge 1$, are well defined and continuously differentiable. We further show that the finite-sample trajectory $t\mapsto F_t^n$ depends continuously on the input sample $(x_i,y_i)_{1\le i\le n}$.

\begin{proof}[Proof of Lemma~\ref{lem:gradient-field-regularity-igb}]
  \textit{Proof of (i).} Using the integral representation of
  $\mathcal{T}_n(F)$ from Lemma~\ref{lem:RKHS-rf}, it is enough to
  prove that $F \mapsto \Psi(\cdot,\cdot,F,\P_n)$ is continuously
  Fréchet differentiable as a map
  $\mathcal{H} \to L^2(Q_0^K \otimes \nu)$. Composition with the
  continuous linear operator $\Psi \mapsto \Phi$ from
  Lemma~\ref{lem:RKHS-representation} then yields that
  $\mathcal{T}_n : \mathcal{H}\to\mathcal{H}$ is continuously Fréchet
  differentiable.

  We now use a technical fact whose proof is deferred to the appendix
  in Lemma~\ref{lem:tech-F}(ii). For each fixed $(\bar\xi,v)$, the map
  $ F \mapsto \Psi(\bar\xi,v,F,\P_n)$ is continuously Fréchet
  differentiable as a map $\mathcal{H}\to\mathbb{R}$, with a
  derivative $F\mapsto \mathrm{d}_F \Psi(\bar\xi,v,\P_n)$ whose
  operator norm is uniformly bounded on bounded sets
  $\{F:\|F\|_{\mathcal{H}}\le M\}$. Therefore by integrating with
  respect to $Q_0^K \otimes\nu$, it is readily checked that
    \begin{align*}
      & \norm[\big]{\Psi(\cdot, \cdot,F+G,\P_n) - \Psi(\cdot, \cdot,F,\P_n)
      - \mathrm{d}_F \Psi(\cdot, \cdot,\P_n)\cdot G}_{L^2(Q_0^K
        \otimes\nu)} = o(\norm{G}_{\mathcal{H}})
    \end{align*}
  as $\norm{G}_{\mathcal{H}}\to 0$, uniformly when $F$ and $G$ are
  taken in bounded sets of $\mathcal{H}$.
  Therefore
  $F \mapsto \Psi(\cdot,\cdot,F,\P_n)$ is continuously Fréchet
  differentiable as a map $\mathcal{H} \to L^2(Q_0^K\otimes\nu)$, with
  a bounded derivative on bounded sets. This proves that
  $\mathcal{T}_n \in C_b^1(\mathcal{H},\mathcal{H})$.

  We now check the linear growth condition. In the decomposition
  $\Psi = \Psi_1 \Psi_2$ from~\eqref{eq:def-Psi}, the factor $\Psi_2$
  corresponds to the Radon--Nikodym derivative
  $\mathrm{d}Q_{n,F}/\mathrm{d}Q_0$ and is uniformly bounded by
  $K^{2^d-1}$, this is clear from its definition
  in~\eqref{eq:def-Psi2}. We again use a technical fact whose proof is
  found in \Cref{lem:tech-F}\textit{(i)}: the map
  $F\mapsto \Psi_1(\xi^1,v,F,\mathrm{P}_n)$ has gradient norm bounded
  by the constant $L\kappa$. Using this fact, we obtain the linear
  growth bound
\[
  |\Psi_1(\xi^1,v,F,\P_n)| \le |\Psi_1(\xi^1,v,0,\P_n)| +
  L\kappa\,\|F\|_{\mathcal{H}}.
\]
Consequently, the norm of $\Psi(\cdot,\cdot,F,\P_n)$ in
$L^2(Q_0^K \otimes \nu)$ grows at most linearly in
$\|F\|_{\mathcal{H}}$, and thus the vector field $\mathcal{T}_n(F)$
satisfies the linear growth condition.

\textit{Proof of $(ii)$.} By point $(i)$, the vector field
$\mathcal{T}_n$ is complete, i.e.\ the differential
equation~\eqref{eq:ode-igb} admits a unique global solution
$(F_t^n)_{t\ge 0}$. To show that this solution depends continuously on
the input sample $(x_i,y_i)_{1\leq i\leq n}$, by
\Cref{prop:first-order-perturbation} we need only show that the
initial condition $F^n_0$ and the vector field $\mathcal{T}_n$ both
depend continuously on $(x_i,y_i)_{1\leq i\leq n}$. Concerning the
initial condition, note that because we assumed that the maps
$z\mapsto \ell(z,y)$ are strictly convex and $C^2$, any sample such
that~\eqref{eq:igb-initialization} is well-defined admits a
neighborhood such that the initial condition $F^n_0$ is given by
\[
  (x_i,y_i)_{1\leq i\leq n} \longmapsto \Big( \text{unique zero of
  }z\mapsto \sum_{i=1}^n \partial_1\ell(z,y_i) \Big),
\]
which is well-defined and continuous by the implicit function theorem.
Let us now prove that the mapping
$(x_i,y_i)_{1\leq i\leq n} \mapsto \mathcal{T}_n \in
C^0_b(\mathcal{H},\mathcal{H})$ is continuous. For this, we exploit
the expression for $\mathcal{T}_n$ given in \Cref{lem:RKHS-rf}.
Consider two samples $(x_i,y_i)_{1\leq i\leq n}$ and
$(x'_i,y'_i)_{1\leq i\leq n}$ and write
$\mathrm{P}'_n=\frac{1}{n}\sum_{i=1}^n \delta_{(x'_i,y'_i)}$, and
$\mathcal{T}_n$ and $\mathcal{T}'_n$ for the vector fields defined
from \eqref{eq:def-rf} respectively using $\mathrm{P}_n$ and
$\mathrm{P}'_n$. Then for any $F\in \mathcal{H}$,
  \begin{equation}
    \label{eqpr:continuity-on-sample}
    \norm{\mathcal{T}_n(F)-\mathcal{T}'_n(F)}_{\mathcal{H}} = \int
    \Big(\Psi(\bar{\xi},v,F,\P_n) - \Psi(\bar{\xi},v,F,\P'_n)\Big)^2
    \,Q_0^K(\mathrm{d}\bar{\xi})\,\nu(\mathrm{d}v),
  \end{equation}
for $\Psi$ the mapping defined by Equations~\eqref{eq:def-Psi},
\eqref{eq:def-Psi1} and~\eqref{eq:def-Psi2}. Considering the first
sample $(x_i,y_i)_{1\leq i\leq n}$ as fixed, note that by the
assumption~\eqref{eq:reference-distribution}, for $Q_0$-almost every
tentative splitting schemes $\xi$ and every leaf $v\in \{0,1\}^d$,
there are no points among the $(x_i)_{1\leq i\leq n}$ at the boundary
of any $A^\xi_v$, i.e.\ the $\1_{A^\xi_v}$ are continuous at all
points of the input sample -- let us call such a splitting scheme a
\emph{good} splitting scheme. Fixing $M>0$, $F\in \mathcal{H}$ with
$\norm{F}_{\mathcal{H}}\leq M$ and a collection of good splitting
schemes $\bar{\xi}=(\xi^1,\dots,\xi^K)$, the mapping
$(x'_i,y'_i)_{1\leq i \leq n} \mapsto
\Psi(\bar{\xi},v,F,\mathrm{P}'_n)$ is a composition of elementary
functions that are analytical on a neighborhood of $(x_i,y_i)_{1\leq i
  \leq n}$, applied to the following terms:
\[
  \partial_1\ell(F(x'_i),y'_i), \quad y'_i, \quad \text{and} \quad
  \1_{A^{\xi^j}_v}(x'_i), \qquad \text{for } 1\leq i\leq n, 1\leq
  j\leq K, v\in \{0,1\}^d.
\]
Using the bound~\eqref{eq:bound-uniform-continuity} and
\Cref{ass:loss}, the first of these terms is continuous at each sample
point $(x_i,y_i)$, and one can find a modulus of continuity that
depends only on $M$, and not on $F$. For the same reason, there is a
uniform bound on $\Psi$ that does not depend on $F$. Therefore, we can
apply the dominated convergence theorem
in~\eqref{eqpr:continuity-on-sample} to get
\[
  \sup_{\norm{F}_{\mathcal{H}}\leq
    M}\norm{\mathcal{T}_n(F)-\mathcal{T}'_n(F)}_{\mathcal{H}}
  \longrightarrow 0 \qquad \text{as }(x'_i,y'_i)_{1\leq i\leq n} \to
  (x_i,y_i)_{1\leq i\leq n},
\]
which concludes the proof.
\end{proof}

For future reference, we state  an analog of Lemma~\ref{lem:gradient-field-regularity-igb} in the infinite population case, which  implies that the vector field $\mathcal{T}:\mathcal{H}\to\mathcal{H}$  is complete and that the ODE~\eqref{eq:ode-igb-population} has a unique solution $(F_t)_{t\ge 0}$ which is continuously differentiable. The proof is omitted for the sake of brevity, because it is a straightforward adaptation of the one of Lemma~\ref{lem:gradient-field-regularity-igb}$(i)$ with $\P_n$ replaced by $\P$.
\begin{lemma}\label{lem:gradient-field-regularity-igb-pop} The operator $F\mapsto \mathcal{T}(F)$ defined in Lemma~\ref{lem:RKHS-rf}$(ii)$ belongs to $C_b^1(\mathcal{H},\mathcal{H})$ and satisfies the linear growth condition.
\end{lemma}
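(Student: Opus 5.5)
The plan is to adapt the proof of Lemma~\ref{lem:gradient-field-regularity-igb}$(i)$, replacing $\P_n$ by $\P$ throughout and using the integral representation of $\mathcal{T}(F)$ given in Lemma~\ref{lem:RKHS-rf}$(ii)$:
\[
\mathcal{T}(F)(z)=\int \Psi(\bar\xi,v,F,\P)\,\1_{A_v^{\xi^1}}(z)\,Q_0^K(\mathrm{d}\bar\xi)\,\nu(\mathrm{d}v),\qquad \Psi=\Psi_1\Psi_2 .
\]
Since the map $\Psi\mapsto\Phi$ of Lemma~\ref{lem:RKHS-representation} is linear with operator norm at most $1$ from $L^2(Q_0^K\otimes\nu)$ to $\mathcal{H}$, it suffices to show the following: $F\mapsto\Psi(\cdot,\cdot,F,\P)$ is continuously Fréchet differentiable from $\mathcal{H}$ into $L^2(Q_0^K\otimes\nu)$, its derivative is bounded on bounded sets, and its $L^2$ norm grows at most linearly in $\|F\|_{\mathcal{H}}$.

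\emph{Pointwise differentiability.} Fix $(\bar\xi,v)$. The map $F\mapsto \P[\partial_1\ell(F(x),y)\1_A(x)]$ is differentiable with derivative
\[
G\mapsto \P\big[\partial_1^2\ell(F(x),y)G(x)\1_A(x)\big].
\]
This follows by dominated convergence, because $\partial_1^2\ell\le L$ and $|G(x)|\le\kappa\|G\|_{\mathcal{H}}$, and the derivative is continuous in $F$ by the continuity of $\partial_1^2\ell$. Dividing by $\P(A)$, we invoke the population analogue of Lemma~\ref{lem:tech-F}: the gradient of $\Psi_1$ has norm at most $L\kappa$. This bound holds because the derivative is an average over $A$ of a quantity bounded by $L\kappa\|G\|_{\mathcal{H}}$.

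The factor $\Psi_2$ is a smooth function (a product of ratios of exponentials with denominators bounded below) of finitely many scores $\Delta(j,u;A)$. Each score is a quadratic expression in the quantities $\P[\partial_1\ell(F)\1_{A_l}]$, divided by $\P(A_l)$. Its derivative in direction $G$ is
\[
\sum_{l}2\,\frac{\P[\partial_1\ell(F)\1_{A_l}]}{\P(A_l)}\;\P\big[\partial_1^2\ell(F)\,G\,\1_{A_l}\big],
\]
whose absolute value is at most $2\sum_l|\Psi_1(A_l)|\,L\kappa\|G\|_{\mathcal{H}}$.

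\emph{Integrability and uniformity (main obstacle).} In the population case, $\Psi_1$ is not bounded a priori: the bound $|\Psi_1(A,F)|\le|\Psi_1(A,0)|+L\kappa\|F\|_{\mathcal{H}}$ involves $\P[\partial_1\ell(0,y)\1_A]/\P(A)$. Condition~\eqref{eq:cond-expectation-bounded} (or the relevant assumption in the population lemma) makes this term uniformly bounded in $A$, since it bounds the conditional mean of $|\partial_1\ell(0,Y)|$ given $X$. As a result, $\Psi_1$ is bounded uniformly in $(\bar\xi,v)$ and locally uniformly in $F$. Combined with $0\le\Psi_2\le K^{2^d-1}$, this makes the pointwise derivatives of $\Psi$ bounded uniformly in $(\bar\xi,v)$ on balls $\{\|F\|_{\mathcal{H}}\le M\}$.

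With this domination, the Taylor remainder estimate
\[
\big\|\Psi(F+G)-\Psi(F)-\mathrm{d}_F\Psi\cdot G\big\|_{L^2}=o(\|G\|_{\mathcal{H}})
\]
follows by integrating the pointwise mean-value bound and applying dominated convergence, exactly as in the proof of Lemma~\ref{lem:gradient-field-regularity-igb}. Continuity of $F\mapsto \mathrm{d}_F\Psi$ in operator norm into $L^2$ follows the same way, using the uniform continuity of $\partial_1^2\ell$ on compacts together with $\sup_x|F(x)-F'(x)|\le\kappa\|F-F'\|_{\mathcal{H}}$. One care point is the cells with $\P(A)=0$, where $\Psi_1$ is set by convention. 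They can be handled by noting that they form a $Q_0$-null set or contribute constantly.

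\emph{Linear growth.} Finally, the bound $|\Psi|\le K^{2^d-1}\big(C+L\kappa\|F\|_{\mathcal{H}}\big)$ gives
\[
\|\mathcal{T}(F)\|_{\mathcal{H}}\le\|\Psi(\cdot,\cdot,F,\P)\|_{L^2}\le 2^{d/2}K^{2^d-1}\big(C+L\kappa\|F\|_{\mathcal{H}}\big),
\]
which is the linear growth condition.
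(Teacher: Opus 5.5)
Your proposal is correct and follows the paper's route: the paper omits this proof as a straightforward rerun of Lemma~\ref{lem:gradient-field-regularity-igb}$(i)$ with $\P_n$ replaced by $\P$, namely pointwise Fréchet differentiability of $\Psi$ through Lemma~\ref{lem:tech-F}, domination to pass into $L^2(Q_0^K\otimes\nu)$, composition with the contraction $\Psi\mapsto\Phi$, and linear growth from the Lipschitz bound on $\Psi_1$ together with $\Psi_2\le K^{2^d-1}$. Your explicit use of \eqref{eq:cond-expectation-bounded}, through conditional Jensen, to bound $\Psi_1(\cdot,\cdot,0,\P)$ uniformly in the cell is a useful precision. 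In the population case, Assumption~\ref{ass:loss} alone does not give such a uniform bound, yet the paper's sketch needs it implicitly, both for $\Psi(\cdot,\cdot,F,\P)\in L^2$ and in Step~4 of Lemma~\ref{lem:tech-F}.
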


\subsubsection{Proofs related to Subsection~\ref{subsec:IGB-large-sample}: law of large numbers.}

\begin{proof}[Proof of Theorem~\ref{thm:lln-igb}] 
\textit{Proof of $(i)$.} We want to prove the almost sure uniform convergence $\mathcal{T}_n(F)\to \mathcal{T}(F)$ on bounded sets $\{F:\|F\|_{\mathcal{H}}\le M\}$ for all $M>0$. Lemma~\ref{lem:RKHS-representation} together with Lemma~\ref{lem:RKHS-rf} imply
\[
\|\mathcal{T}_n(F)-\mathcal{T}(F)\|_{\mathcal{H}}^2
\le \int\big( \Psi(\bar\xi,v,F,\P_n)-\Psi(\bar\xi,v,F,\P)\big)^2\,Q_0^K(\mathrm{d}\bar\xi)\nu(\mathrm{d}v),
\]
so that we need to prove that, for all $M> 0$, as $n\to\infty$,
\begin{equation}\label{eq:L2-conv-sup}
\sup_{\|F\|_{\mathcal{H}}\le M}
\int\big( \Psi(\bar\xi,v,F,\P_n)-\Psi(\bar\xi,v,F,\P)\big)^2\,Q_0^K(\mathrm{d}\bar\xi)\nu(\mathrm{d}v)
\xrightarrow{a.s.} 0.
\end{equation}

By the Glivenko--Cantelli property stated in point $(ii)$ of Lemma~\ref{lem:Donsker}, the class $\mathcal{C}_M$ is $\P$-Glivenko--Cantelli, so that
\[
\sup_{\|F\|_{\mathcal{H}}\le M,\; A\in\mathcal{R}}
\big|\P_n[\partial_1\ell(F(x),y)\1_{A}(x)]-\P[\partial_1\ell(F(x),y)\1_{A}(x)]\big|
\stackrel{a.s.^*}{\longrightarrow} 0.
\]
Since the class $\mathcal{R}$ of hyperrectangles is also $\P$-Glivenko--Cantelli (it is a VC class), we also have
\[
\sup_{A\in\mathcal{R}}|\P_n(A)-\P(A)|\stackrel{a.s.^*}{\longrightarrow} 0.
\]
Outer almost sure convergence implies the existence of $\Omega_0\subset\Omega$ with $\mathbb{P}^*(\Omega\setminus\Omega_0)=0$ such that all above convergences hold on $\Omega_0$; see Chapter~1.9 in \cite{vW96}. On this set $\Omega_0$, for all fixed $A\in\mathcal{R}$ such that $\P(A)>0$,
\[
\sup_{\|F\|_{\mathcal{H}}\le M}
\Big|\frac{\P_n[\partial_1\ell(F(x),y)\1_{A}(x)]}{\P_n(A)}
      -\frac{\P[\partial_1\ell(F(x),y)\1_{A}(x)]}{\P(A)}\Big|
\longrightarrow 0.
\]
Similarly,
\[
\sup_{\|F\|_{\mathcal{H}}\le M}
\Big|\frac{\P_n[\partial_1\ell(F(x),y)\1_{A}(x)]^2}{\P_n(A)}
      -\frac{\P[\partial_1\ell(F(x),y)\1_{A}(x)]^2}{\P(A)}\Big|
\longrightarrow 0.
\]
The same converges hold when $\P(A)=0$ with the convention $0/0=0$. Next, we observe that $\Psi(\bar\xi,v,F,\P_n)$ can be written as a continuous function of the ratios
\[
\frac{\P_n[\partial_1\ell(F(x),y)\1_A(x)]}{\P_n(A)},\quad
\frac{\P_n[\partial_1\ell(F(x),y)\1_A(x)]^2}{\P_n(A)},
\]
where $A$ ranges over a finite collection of hyperrectangles $(A_i^{\bar\xi})_{i\in\mathcal{I}}$ that depend only on $(\bar\xi,v)$ (see Equations~\eqref{eq:ratios}--\eqref{eq:index-set} and Lemma~\ref{lem:tech-P} and its proof for details). As a consequence, we deduce that, on $\Omega_0$,
\[
\sup_{\|F\|_{\mathcal{H}}\le M}
\big|\Psi(\bar\xi,v,F,\P_n)-\Psi(\bar\xi,v,F,\P)\big|
\longrightarrow 0
\]
for each fixed $(\bar\xi,v)$.

We finally establish the uniform integrability condition
\begin{equation}\label{eq:uniform-integrability}
\int \sup_{n\ge 1}\sup_{\|F\|_{\mathcal{H}}\le M}
\Psi(\bar\xi,v,F,\P_n)^2 \,Q_0^K(\mathrm{d}\bar\xi)\nu(\mathrm{d}v)<\infty,
\end{equation}
which entails
\[
\int \sup_{\|F\|_{\mathcal{H}}\le M}
\big|\Psi(\bar\xi,v,F,\P_n)-\Psi(\bar\xi,v,F,\P)\big|^2
\,Q_0^K(\mathrm{d}\bar\xi)\nu(\mathrm{d}v)
\longrightarrow 0,
\]
and proves a strengthened version of~\eqref{eq:L2-conv-sup}. Lemma~\ref{lem:uniform-integrability} from the Appendix implies that 
\[
\mathbb{E}\!\int \sup_{n\ge1}\sup_{\|F\|_{\mathcal{H}}\le M}
|\Psi(\bar\xi,v,F,\P_n)|^2\,
Q_0^K(\mathrm{d}\bar\xi)\nu(\mathrm{d}v)
<\infty,
\]
which proves that the integral in~\eqref{eq:uniform-integrability} is finite almost surely and concludes the proof.

\textit{Proof of $(ii)$.} We apply Corollary~\ref{cor:stochastic-perturbation} with $E=\mathcal{H}$, $X_{0,n}=F_0^n$ and $G_n=\mathcal{T}_n$. The almost sure convergence of the initial positions $F_0^n\to F_0$ in $\mathcal{H}$ is assumed as an hypothesis. The almost sure convergence of the vector fields $\mathcal{T}_n\to\mathcal{T}$ in $C_b(\mathcal{H},\mathcal{H})$ has just been established in point $(i)$.  The limit vector field $\mathcal{T}$ is complete by Lemma~\ref{lem:gradient-field-regularity-igb-pop}.  Therefore,  Corollary~\ref{cor:stochastic-perturbation} implies the almost sure convergence $F^n\to F$  in $C^1([0,+\infty),\mathcal{H})$, where the limit $F$ is the solution of the ODE~\eqref{eq:ode-igb-population} started at $F_0$ at time $t=0$.
\end{proof}

\subsubsection{Proofs related to Subsection~\ref{subsec:IGB-large-sample}: functional central limit theorem.}

We begin with the definition of a family of RKHSs  $(\widetilde{\mathcal{H}}_\gamma)_{\gamma\in(0,\gamma_{\max})}$ defined by their reproducing kernels
\[
\tilde{k}_\gamma(z,z')
=
\int w_\gamma(\bar\xi)\,
\1_{A_v^{\xi^1}}(z)\,\1_{A_v^{\xi^1}}(z')
\,Q_0^K(\mathrm{d}\bar\xi)\,\nu(\mathrm{d}v),
\qquad z,z'\in[0,1]^p,
\]
with weights given by
\[
w_\gamma(\bar\xi) = \max_{i\in\mathcal{I}}\big(w_\gamma(A^{\bar\xi}_i)\big)
\]
where $(A^{\bar\xi}_i)_{i\in\mathcal{I}}$ denotes the collection of hypercubes appearing in the definition of $\Psi(\bar\xi,v,F,\P_n)$ -- see Equation~\eqref{eq:index-set} in the discussion preceding Lemma~\ref{lem:tech-P} -- and $w_\gamma(A)$ is defined in Equation~\eqref{eq:def-w-gamma}. 

\begin{lemma}\label{lem:RKHS-H-gamma}
For $\gamma\in[0,\gamma_{\max})$, the kernel $\tilde{k}_\gamma$ is continuous and bounded. Moreover, the family $(\widetilde{\mathcal{H}}_\gamma)_{\gamma\in[0,\gamma_{\max})}$ of associated RKHSs  satisfies the following properties:
\begin{itemize}
    \item[-] $\widetilde{\mathcal{H}}_0=\mathcal{H}$ is the RKHS with reproducing kernel $k$ defined in Equation~\eqref{eq:RKHS-kernel};
    \item[-] for all $0\leq \gamma_1< \gamma_2<\gamma_{\max}$,
    \[
    \widetilde{\mathcal{H}}_{\gamma_1}\subset \widetilde{\mathcal{H}}_{\gamma_2}
    \quad\text{and}\quad
    \|F\|_{\widetilde{\mathcal{H}}_{\gamma_2}}
    \leq
    \|F\|_{\widetilde{\mathcal{H}}_{\gamma_1}},
    \qquad F\in\widetilde{\mathcal{H}}_{\gamma_1},
    \]
    and $\widetilde{\mathcal{H}}_{\gamma_1}$ is dense in $\widetilde{\mathcal{H}}_{\gamma_2}$.
\end{itemize}
\end{lemma}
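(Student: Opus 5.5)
The plan is to obtain everything from the integral (feature-map) representation of kernels, exactly as in Lemma~\ref{lem:RKHS-representation}, with Lemma~2.7.7 of \cite{HE15}. Write
\[
\tilde k_\gamma(z,z')=\int \phi_\gamma(\bar\xi,v;z)\,\phi_\gamma(\bar\xi,v;z')\,Q_0^K(\mathrm{d}\bar\xi)\,\nu(\mathrm{d}v),
\qquad
\phi_\gamma(\bar\xi,v;z)=w_\gamma(\bar\xi)^{1/2}\1_{A_v^{\xi^1}}(z).
\]
For $\gamma\in[0,\gamma_{\max})$, the functions in $\widetilde{\mathcal H}_\gamma$ are then exactly those of the form $F(z)=\int \Psi\,\phi_\gamma(\cdot;z)\,\mathrm{d}(Q_0^K\otimes\nu)$ with $\Psi\in L^2$. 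The norm $\|F\|_{\widetilde{\mathcal H}_\gamma}$ is the minimal $L^2$ norm among such representing $\Psi$.

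\textbf{Boundedness.} We have
\[
\tilde k_\gamma(z,z)\le \int w_\gamma(\bar\xi)\,\mathrm{d}Q_0^K,
\]
since for fixed $z$ exactly one leaf contains $z$. The weight $w_\gamma(\bar\xi)$ is a maximum over the finitely many hypercubes $A_i^{\bar\xi}$, $i\in\mathcal I$. Each of these is a leaf or an internal cell of one of the $K$ schemes $\xi^k$, or a child cell produced by a candidate split. I would bound the maximum by the sum $\sum_{i\in\mathcal I} w_\gamma(A_i^{\bar\xi})$. Each summand's marginal law is controlled by $\int w_\gamma(\xi)\,Q_0(\mathrm{d}\xi)$, because cells at depth $\le d$ are ancestors of leaves and $w_\gamma$ is monotone under inclusion ($\P(A)\le\P(A')$ for $A\subset A'$). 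The sum is therefore finite for $\gamma<\gamma_{\max}$.

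\textbf{Continuity.} This is the main technical point. I would write
\[
\tilde k_\gamma(z,z)+\tilde k_\gamma(z',z')-2\tilde k_\gamma(z,z')=\int w_\gamma(\bar\xi)\,\big|\1_{A_v^{\xi^1}}(z)-\1_{A_v^{\xi^1}}(z')\big|\,\mathrm{d}(Q_0^K\otimes\nu),
\]
using that indicators are $\{0,1\}$-valued. As $z'\to z$, the integrand tends to $0$ for $Q_0^K$-a.e.\ $\bar\xi$, because the split thresholds have a Beta density, so $z$ lies a.s.\ off the boundaries. To pass to the limit I would use dominated convergence with dominating function $2w_\gamma$, which is integrable by the previous step. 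Joint continuity of $\tilde k_\gamma$ then follows by the same argument applied to $|\tilde k_\gamma(z_1,z_2)-\tilde k_\gamma(z_1',z_2')|$. The one subtlety is to make sure the $\mathcal I$-cells also include the candidate cells from $\xi^2,\dots,\xi^K$, so that the integrability claim covers them. If this is delicate, I would first try to dominate via the independence of the $\xi^k$ and the bound $\max\le\sum$.

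\textbf{The structural properties.} For $\gamma=0$ we have $w_0\equiv1$, so $\tilde k_0$ coincides with the expression for $k$ in the proof of Lemma~\ref{lem:RKHS-representation}, and $\widetilde{\mathcal H}_0=\mathcal H$.

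For $\gamma_1<\gamma_2$, the hypercube weights are $\ge1$ and nondecreasing in $\gamma$, so $r=(w_{\gamma_1}/w_{\gamma_2})^{1/2}\le1$. If $F$ is represented by $\Psi$ in $\widetilde{\mathcal H}_{\gamma_1}$, then $\Psi\phi_{\gamma_1}=(r\Psi)\phi_{\gamma_2}$. Hence $F\in\widetilde{\mathcal H}_{\gamma_2}$ with
\[
\|F\|_{\widetilde{\mathcal H}_{\gamma_2}}\le\|r\Psi\|_{L^2}\le\|\Psi\|_{L^2},
\]
and taking the infimum over $\Psi$ gives the norm inequality.

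For density, note that $\widetilde{\mathcal H}_{\gamma_1}$ is the image of $L^2$ under the map $\Psi\mapsto\int r\Psi\,\phi_{\gamma_2}$. The map $\Psi\mapsto\int\Psi\phi_{\gamma_2}$ is a continuous surjection $L^2\to\widetilde{\mathcal H}_{\gamma_2}$. The multiplication operator by $r$ has dense range in $L^2$, since $r>0$ everywhere, so $\{r\Psi\}$ contains all $L^2$ functions with bounded $1/r$-multiples, which is dense. A continuous surjection maps dense sets to dense sets, which gives the density claim.
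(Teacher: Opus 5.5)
Your proposal is correct, and its skeleton matches the paper's proof. Both use the feature-map representation from Theorem~2.7.7 of \cite{HE15}. Both get boundedness by dominating $w_\gamma(\bar\xi)$ with a finite sum of weights whose laws reduce to $Q_0$. Both get continuity from almost-everywhere continuity of the indicators plus dominated convergence, and both use $w_0\equiv 1$ for $\gamma=0$.

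You differ in the structural part. For the inclusion and norm comparison, the paper notes that $\tilde k_{\gamma_2}-\tilde k_{\gamma_1}$ is positive semidefinite and cites Theorem~2.7.11 of \cite{HE15}. You instead prove it directly from the identity $\Psi w_{\gamma_1}^{1/2}=(r\Psi)w_{\gamma_2}^{1/2}$ with $r\le 1$ and take the infimum over representations. This is self-contained, makes the contraction explicit, and costs nothing extra.

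For density, the paper shows $\widetilde{\mathcal{H}}_0$ is dense in each $\widetilde{\mathcal{H}}_\gamma$ via the explicit truncation $\Psi w_\gamma^{1/2}\1_{\{w_\gamma\le\tau\}}$. You argue that multiplication by $r>0$ has dense range in $L^2$ and push this through the continuous surjection $L^2\to\widetilde{\mathcal{H}}_{\gamma_2}$. The truncation is exactly a witness of that dense range, so this is the same mechanism, stated directly for arbitrary $\gamma_1<\gamma_2$.

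One point in your boundedness step needs tightening. The cells $A_i^{\bar\xi}$ with $i=(w,k,l)$ are children of $A_w^{\xi^1}$ under the candidate split $(j_w^k,u_w^k)$; they are not cells of $\xi^k$ for $k\ge 2$. The paper's device resolves the subtlety you flag:
\begin{itemize}
\item Such a cell is a cell of the scheme obtained from $\xi^1$ by replacing its split at $w$ with $(j_w^k,u_w^k)$.
\item This hybrid scheme has law $Q_0$, because all splits are i.i.d.\ under $Q_0^K$.
\item Hence the cell's weight is dominated by the hybrid scheme's leaf weight, whose $Q_0$-integral is finite for $\gamma<\gamma_{\max}$.
\end{itemize}

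Also, $w_\gamma$ is not literally monotone under inclusion, because of the convention $w_\gamma(A)=1$ when $\P(A)=0$. The domination you need still holds: all weights are at least $1$, and a cell of positive mass contains a leaf of positive mass no larger than its own.
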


As we will see during the proof, we can take  $\widetilde{H}=\widetilde{H}_\gamma$ in  Theorem~\ref{thm:clt-igb}, for any $\gamma\in (2-\gamma_{\max},\gamma_{\max})$. The condition $\gamma_{\max}>1$ ensures that this choice is non empty.

\begin{proof}[Proof of Lemma~\ref{lem:RKHS-H-gamma}]
Each cell of the collection $(A_i^{\bar\xi})_{i\in\mathcal{I}}$ appears among the cells $(A_w^{\tilde\xi^1_j})_{j\in\mathcal{J},w\in\mathcal{T}_d}$  generated by the splitting scheme $\tilde\xi^1_j$ obtained from $\xi^1$ by replacing at most one split with a candidate split taken from $\bar\xi$ (this replacement is parametrized by $\mathcal{J}$). Since cells at inner nodes are further partitioned into leaf cells, we have
\[
w_\gamma(\bar\xi)\leq \max_{j\in\mathcal{J}}\max_{w\in\mathcal{T}_d} w_\gamma(A^{\tilde\xi^1_j}_w)= \max_{j\in\mathcal{J}}w_\gamma (\tilde{\xi}^1_j)\leq \sum_{j\in\mathcal{J}}w_\gamma (\tilde{\xi}^1_i).
\]
Then we can deduce that
\begin{align*}
\sup_{z,z'\in[0,1]^p} |\tilde{k}_\gamma(z,z')|
&\leq \sum_{j\in\mathcal{J}} \int w_\gamma(\tilde{\xi}^1_j)\,Q_0^K(\rmd\bar\xi)\,\nu(\rmd v) \\
&= \mathrm{card}(\mathcal{J}) \int w_\gamma(\xi)\,Q_0(\rmd\xi)\nu(\rmd v) < \infty.
\end{align*}
The equality holds because all terms in the sum are identical: under
$Q_0^K$, each $\tilde{\xi}^1_j$ has distribution $Q_0$ since the
splits are i.i.d. The finiteness follows from the definition of
$\gamma_{\max}$, as we assume $\gamma<\gamma_{\max}$.This proves that
$\tilde{k}_\gamma$ is bounded. Continuity of $\tilde{k}_\gamma$
follows from the same arguments as in the proof of Proposition~8 in
\citet{DDD25}, using that the map
\[
(z,z')\longmapsto w_\gamma(\bar\xi)\,\1_{A_v^{\xi^1}}(z)\,\1_{A_v^{\xi^1}}(z')
\]
is continuous at every $(z,z')\in[0,1]^p\times[0,1]^p$ for $Q_0^K(\rmd\bar\xi)\otimes\nu(\rmd v)$-almost every $(\bar\xi,v)$, together with the dominated convergence theorem.

Since $\gamma\mapsto w_\gamma(A)$ is nondecreasing, so is $\gamma\mapsto w_\gamma(\bar\xi)$, and, for $0\leq \gamma_1<\gamma_2<\gamma_{\max}$, 
\begin{equation}\label{eq:tilde-k-gamma}
(\tilde k_{\gamma_2}-\tilde k_{\gamma_1})(z,z')
=
\int
\big(w_{\gamma_2}(\bar\xi)-w_{\gamma_1}(\bar\xi)\big)\,
\1_{A_v^\xi}(z)\1_{A_v^\xi}(z')\,
Q_0^K(\rmd\bar\xi)\nu(\rmd v)
\end{equation}
defines a positive semidefinite kernel. The inclusion
$\widetilde{\mathcal{H}}_{\gamma_1}\subset \widetilde{\mathcal{H}}_{\gamma_2}$ together with the norm comparison then follows from a direct application of Theorem~2.7.11 in \cite{HE15}.

When $\gamma=0$, $w_0(A)\equiv 1$ and therefore $w_0(\bar\gamma)\equiv 1$, so that $\tilde{k}_0=k$ and $\widetilde{\mathcal{H}}_0=\mathcal{H}$.

To prove the density of $\widetilde{\mathcal{H}}_{\gamma_1}$ in $\widetilde{\mathcal{H}}_{\gamma_2}$, it is sufficient to show that $\widetilde{\mathcal{H}}_0$ is dense in $\widetilde{\mathcal{H}}_\gamma$ for all $\gamma\in(0,\gamma_{\max})$. By Theorem~2.7.7 in \cite{HE15} (see also Lemma~\ref{lem:RKHS-representation} and its proof), any function $F\in\widetilde{\mathcal{H}}_\gamma$ admits a representation of the form
\[
F(z)
=
\int \Psi(\bar\xi,v)\,w_\gamma(\bar\xi)^{1/2}\,
\1_{A_v^{\xi^1}}(z)\,
Q_0^K(\rmd\bar\xi)\,\nu(\rmd v)
\]
for some $\Psi\in L^2(Q_0^K\otimes\nu)$. For $\tau>0$, we define
\[
F^\tau(z)
=
\int
\Psi(\bar\xi,v)\,
w_\gamma(\bar\xi)^{1/2}\1_{\{w_\gamma(\bar\xi)\leq \tau \}}\,
\1_{A_v^{\xi^1}}(z)\,
Q_0^K(\rmd\bar\xi)\,\nu(\rmd v).
\]
It is clear that 
\[
(\bar\xi,v)\mapsto \Psi(\bar\xi,v)\,
w_\gamma(\bar\xi)^{1/2}\1_{\{w_\gamma(\bar\xi)\leq \tau \}} \in L^2(Q_0^K\otimes\nu)
\]
so that $F^\tau\in\widetilde{\mathcal{H}}_0$. Moreover, 
\begin{align*}
\|F-F_\tau\|_{\widetilde{\mathcal{H}}_\gamma}^2&\leq \int \Psi(\bar\xi,v)^2\1_{\{w_\gamma(\bar\xi)>\tau\}}Q_0^K(\rmd\bar\xi)\,\nu(\rmd v)  
\end{align*}
converges to $0$ as $\tau\to\infty$, proving the convergence $F^\tau\to F$ in $\widetilde{\mathcal{H}}_\gamma$. This proves  the density of $\widetilde{\mathcal{H}}_0$ in $\widetilde{\mathcal{H}_\gamma}$.
\end{proof}

The following lemma explains the role played by the RKHS $\widetilde{\mathcal{H}}_\gamma$ and will be used in the proof of Theorem~\ref{thm:clt-igb}.

\begin{lemma}\label{lem:role-H-gamma}
Let $\gamma_{\max}>1$. Then, for any bounded functions
$\phi_i\in L^\infty(Q_0^K\otimes \nu)$, $i\in\mathcal{I}$, satisfying
\[
\max_{i\in\mathcal{I}}
\sup_{\bar\xi\in\mathscr{S}^K,\; v\in\{0,1\}^d}
|\phi_i(\bar\xi,v)| \leq C,
\]
the function $F$ defined by
\[
F(z)
=
\int
\Bigg(\sum_{i\in\mathcal{I}}
\frac{\phi_i(\bar\xi,v)}{\P(A_i^{\bar\xi})}\1_{\{\P(A_i^{\bar\xi})>0\}}\Bigg)
\1_{A_v^{\xi^1}}(z)\,
Q_0^K(\rmd\bar\xi)\,\nu(\rmd v),
\qquad z\in[0,1]^p,
\]
belongs to $\widetilde{\mathcal{H}}_\gamma$ for all
$\gamma\in(2-\gamma_{\max},\gamma_{\max})$, and its norm satisfies
\[
\|F\|_{\widetilde{\mathcal{H}}_\gamma}\leq C\,\mathrm{card}(\mathcal{I})\,
\|w_{1-\gamma/2}(\bar\xi)\|_{L^2(Q_0^K\otimes\nu)}.
\]
\end{lemma}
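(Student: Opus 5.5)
The plan is to use the feature-space description of $\widetilde{\mathcal{H}}_\gamma$ already exploited in the density part of the proof of Lemma~\ref{lem:RKHS-H-gamma}. By \citet[Theorem~2.7.7]{HE15} applied to the kernel $\tilde k_\gamma$, written as an integral of products of the features $(\bar\xi,v)\mapsto w_\gamma(\bar\xi)^{1/2}\1_{A_v^{\xi^1}}(z)$, every function of the form
\[
G(z)=\int \Theta(\bar\xi,v)\,w_\gamma(\bar\xi)^{1/2}\,\1_{A_v^{\xi^1}}(z)\,Q_0^K(\rmd\bar\xi)\,\nu(\rmd v)
\]
with $\Theta\in L^2(Q_0^K\otimes\nu)$ lies in $\widetilde{\mathcal{H}}_\gamma$. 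Moreover, $\|G\|_{\widetilde{\mathcal{H}}_\gamma}\le \|\Theta\|_{L^2(Q_0^K\otimes\nu)}$, exactly as in Lemma~\ref{lem:RKHS-representation}. So it suffices to write $F$ in this form.

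To do so, I set
\[
\Theta(\bar\xi,v)=w_\gamma(\bar\xi)^{-1/2}\sum_{i\in\mathcal{I}}\frac{\phi_i(\bar\xi,v)}{\P(A_i^{\bar\xi})}\1_{\{\P(A_i^{\bar\xi})>0\}}.
\]
Note that $w_\gamma(\bar\xi)\ge 1$ whenever all the $\P(A_i^{\bar\xi})\le 1$, which always holds, so this is well defined. The key pointwise bound is the following. For each $i$ with $\P(A_i^{\bar\xi})>0$ we have $1/\P(A_i^{\bar\xi})=w_1(A_i^{\bar\xi})\le w_1(\bar\xi)$. We also have $w_\gamma(\bar\xi)=w_1(\bar\xi)^{\gamma}$, since $w_\gamma(A)=w_1(A)^\gamma$ and $t\mapsto t^\gamma$ is increasing, so the maximum over $i\in\mathcal{I}$ commutes with the power. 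Hence
\[
|\Theta(\bar\xi,v)|\le C\,\mathrm{card}(\mathcal{I})\,w_1(\bar\xi)^{1-\gamma/2}=C\,\mathrm{card}(\mathcal{I})\,w_{1-\gamma/2}(\bar\xi).
\]
This uses $1-\gamma/2\ge 0$, which holds because $\gamma<\gamma_{\max}$; if $\gamma>2$, the same argument applies with the exponent negative and the bound only improves. Taking $L^2$ norms gives exactly the claimed estimate $\|F\|_{\widetilde{\mathcal{H}}_\gamma}\le C\,\mathrm{card}(\mathcal{I})\,\|w_{1-\gamma/2}(\bar\xi)\|_{L^2(Q_0^K\otimes\nu)}$.

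The remaining and main step is to show that the right-hand side is finite. Since $w_{1-\gamma/2}(\bar\xi)^2=w_{2-\gamma}(\bar\xi)$, it suffices that $\int w_{2-\gamma}(\bar\xi)\,Q_0^K(\rmd\bar\xi)<\infty$. For this I reuse the argument in the proof of Lemma~\ref{lem:RKHS-H-gamma}. Each cell $A_i^{\bar\xi}$ is a union of leaf cells of one of the schemes $\tilde\xi^1_j$, $j\in\mathcal{J}$, so by monotonicity of $w_{2-\gamma}$ under inclusion,
\[
w_{2-\gamma}(\bar\xi)\le\sum_{j\in\mathcal{J}}w_{2-\gamma}(\tilde\xi^1_j).
\]
Each $\tilde\xi^1_j$ has law $Q_0$, so the integral is bounded by $\mathrm{card}(\mathcal{J})\int w_{2-\gamma}(\xi)\,Q_0(\rmd\xi)$. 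This is finite because $2-\gamma<\gamma_{\max}$, equivalently $\gamma>2-\gamma_{\max}$, which is precisely the lower constraint in the statement; the case $2-\gamma\le 0$ is trivial since then $w_{2-\gamma}\le 1$. Finally, I will check that $\Theta$ is measurable and that the integral representation of $F$ coincides pointwise with $G$. Both are immediate, because $\Theta\,w_\gamma^{1/2}$ recovers the integrand of $F$ and is absolutely integrable by Cauchy--Schwarz against the bounded indicator.
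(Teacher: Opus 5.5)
Your proposal is correct and follows the same route as the paper. You represent $F$ through the feature map of $\tilde k_\gamma$, take the coefficient $\Theta=w_\gamma(\bar\xi)^{-1/2}\sum_{i\in\mathcal{I}}\phi_i\,\P(A_i^{\bar\xi})^{-1}\1_{\{\P(A_i^{\bar\xi})>0\}}$, bound it pointwise by $C\,\mathrm{card}(\mathcal{I})\,w_{1-\gamma/2}(\bar\xi)$, and use $2-\gamma<\gamma_{\max}$ for square integrability. Your extra reduction of $\int w_{2-\gamma}(\bar\xi)\,Q_0^K(\rmd\bar\xi)$ to $Q_0$-integrals of $w_{2-\gamma}(\tilde\xi^1_j)$ usefully makes explicit what the paper compresses into ``by definition of $\gamma_{\max}$'', since $\gamma_{\max}$ is defined through the single-scheme weight $w_\gamma(\xi)$ rather than $w_\gamma(\bar\xi)$.
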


In other words, the lemma states a continuity property of the linear map
\[
(\phi_i)_{i\in\mathcal{I}}
\in \big(L^\infty(Q_0^K\otimes \nu)\big)^{\mathcal{I}}
\longmapsto
F\in \widetilde{\mathcal{H}}_\gamma .
\]

\begin{proof}[Proof of Lemma~\ref{lem:role-H-gamma}]
Using the definition~\eqref{eq:tilde-k-gamma} of the kernel $\tilde{k}_\gamma$
and applying Theorem~2.7.7 in \cite{HE15}, we obtain that
$F\in \widetilde{\mathcal{H}}_\gamma$ as soon as the function
\[
\Psi(\bar\xi,v)
=
\Bigg(\sum_{i\in\mathcal{I}}
\frac{\phi_i(\bar\xi,v)}{\P(A_i^{\bar\xi})}\1_{\{\P(A_i^{\bar\xi})>0\}}\Bigg)
 w_\gamma(\bar\xi)^{-1/2}
\]
belongs to $L^2(Q_0^K\otimes\nu)$.

By the uniform bound on the $\phi_i$ and the definition of the weights
$w_\gamma$, we have
\[
|\Psi(\bar\xi,v)|
\leq
C\,\mathrm{card}(\mathcal{I})\, w_{1-\gamma/2}(\bar\xi).
\]
By definition of $\gamma_{\max}$, this upper bound is square integrable
whenever $2-\gamma<\gamma_{\max}$. Hence, for
$\gamma\in(2-\gamma_{\max},\gamma_{\max})$, we obtain
$F\in \widetilde{\mathcal{H}}_\gamma$ with
\[
\|F\|_{\widetilde{\mathcal{H}}_\gamma}
\leq
\|\Psi\|_{L^2(Q_0^K\otimes \nu)}
\leq
C\,\mathrm{Card}(\mathcal{I})\,
\|w_{1-\gamma/2}(\bar\xi)\|_{L^2(Q_0^K\otimes\nu)}.
\]
This concludes the proof.
\end{proof}

\begin{proof}[Proof of Theorem~\ref{thm:clt-igb} $(i)$]
Throughout the proof, we fix $\gamma\in (2-\gamma_{\max},\gamma_{\max})$ and establish~$(i)$ with  $\widetilde{H}=\widetilde{H}_\gamma$.

\textit{Strategy for the proof.} Because $\mathcal{H}\subset \widetilde{H}_\gamma$, we can view the scaled difference
\[
\sqrt{n}\big(\mathcal{T}_n(F)-\mathcal{T}(F)\big)
=
\int
\sqrt{n}\big(\Psi(\bar\xi,v,F,\P_n)-\Psi(\bar\xi,v,F,\P)\big)
\,\1_{A_v^{\xi^1}}(\cdot)\,Q_0^K(\rmd \bar\xi)\nu(\rmd v),
\]
 as an element of $C^0_b(B,\widetilde{\mathcal{H}}_\gamma)$, where $B=\{F\colon\|F\|_{\mathcal{H}}\leq M\}$ with arbitrary $M>0$.

To avoid arbitrarily small denominators, we introduce the following notion:
\begin{equation}\label{eq:regular}
\text{$\bar\xi$ is $\varepsilon_n$-regular} \quad \text{if}\quad \P(A^{\bar\xi}_i)\geq\varepsilon_n \quad \text{for all } i\in \mathcal{I},
\end{equation}
where $(A^{\bar\xi}_i)_{i\in\mathcal{I}}$ denotes the (finite) collection of all hypercubes involved in the definition of $\Psi(\bar\xi,v,F,\P_n)$ --  details are given in the Appendix, see Equation~\eqref{eq:index-set}. The integration domain $\mathcal{D}=\mathscr{S}^K\times \{0,1\}^d$ is partitioned accordingly as
\[
\mathcal{D}
=
\mathcal{D}_{\varepsilon_n}
\cup
(\mathcal{D}\setminus\mathcal{D}_{\varepsilon_n}),
\qquad
\mathcal{D}_{\varepsilon_n}
=
\{(\bar\xi,v)\in\mathcal{D}:\ \text{$\bar\xi$ is $\varepsilon_n$-regular}\}.
\]

We introduce the decomposition
\[
\mathcal{T}_n(F)=\mathcal{T}_n^{\varepsilon_n}(F)+\mathcal{R}_n^{\varepsilon_n}(F),
\]
with
\begin{align*}
\mathcal{T}_n^{\varepsilon_n}(F)
&=
\int_{\mathcal{D}_{\varepsilon_n}}
\Psi(\bar\xi,v,F,\P_n)\,\1_{A_v^{\xi^1}}(\cdot)\,
Q_0^K(\rmd \bar\xi)\nu(\rmd v),\\
\mathcal{R}_n^{\varepsilon_n}(F)
&=
\int_{\mathcal{D}\setminus\mathcal{D}_{\varepsilon_n}}
\Psi(\bar\xi,v,F,\P_n)\,\1_{A_v^{\xi^1}}(\cdot)\,
Q_0^K(\rmd \bar\xi)\nu(\rmd v).
\end{align*}
Similarly, with obvious notation,
\[
\mathcal{T}(F)=\mathcal{T}^{\varepsilon_n}(F)+\mathcal{R}^{\varepsilon_n}(F).
\]
We will see that, for a suitable choice of the truncation sequence $\varepsilon_n\to 0$,
the term $\sqrt{n}\big(\mathcal{T}_n^{\varepsilon_n}-\mathcal{T}^{\varepsilon_n}\big)$ captures the main contribution,
while $\sqrt{n}\big(\mathcal{R}_n^{\varepsilon_n}-\mathcal{R}^{\varepsilon_n}\big)$ is a negligible remainder.
More precisely, we prove below that, for $\gamma\in(2-\gamma_{\max},\gamma_{\max})$ and a suitable choice of $\delta\in(0,1/2)$, the truncation level $\varepsilon_n=n^{-\delta}$ ensures that
\begin{equation}\label{eq:step1}
n\,\mathbb{E}\!\left[
\sup_{F\in B}
\|\mathcal{R}_n^{\varepsilon_n}(F)-\mathcal{R}^{\varepsilon_n}(F)\|_{\widetilde{\mathcal{H}}_\gamma}^2
\right]
\;\longrightarrow\; 0
\quad \text{as } n\to\infty,
\end{equation}
and
\begin{equation}\label{eq:step2}
\sqrt{n}\big(\mathcal{T}_n^{\varepsilon_n}-\mathcal{T}^{\varepsilon_n}\big)
\;\xrightarrow{d}\;
\mathscr{W}
\quad \text{in } C^0_b(B,\widetilde{\mathcal{H}}_\gamma),
\end{equation}
with a Gaussian limit $\mathscr{W}$. 
Equation~\eqref{eq:step1} implies the convergence in probability 
$\sqrt{n}\big(\mathcal{R}_n^{\varepsilon_n}-\mathcal{R}^{\varepsilon_n}\big) \rightarrow 0$  in $C^0_b(B,\widetilde{\mathcal{H}}_\gamma)$. Then, 
Equation~\eqref{eq:step2}, together with Slutsky’s lemma,  yields
\[
\sqrt{n}\big(\mathcal{T}_n-\mathcal{T}\big)
\;\xrightarrow{d}\;
\mathscr{W}
\quad \text{in } C^0_b(B,\widetilde{\mathcal{H}}_\gamma).
\]
Since the bounded set $B\subset\mathcal{H}$ is arbitrary, this establishes convergence in distribution in
$C^0_b(\mathcal{H},\widetilde{\mathcal{H}}_\gamma)$. In the sequel, we prove Equations~\eqref{eq:step1} and~\eqref{eq:step2} successively.

\textit{Proof of Equation~\eqref{eq:step1}.}
It is enough to prove that
\begin{equation}\label{eq:step1bis}
n\mathbb{E}\Big[\sup_{F\in B}\|\mathcal{R}_n^{\varepsilon_n}(F)\|_{\widetilde{\mathcal{H}}_\gamma}^2\Big]\longrightarrow 0
\quad\mbox{and}\quad
n\sup_{F\in B}\|\mathcal{R}^{\varepsilon_n}(F)\|_{\widetilde{\mathcal{H}}_\gamma}^2\longrightarrow 0.
\end{equation}
The RKHS squared norm satisfies, for $F\in B$,
\[
\|\mathcal{R}_n^{\varepsilon_n}(F)\|_{\widetilde{\mathcal{H}}_\gamma}^2
=
\int_{\mathcal{D}\setminus\mathcal{D}_{\varepsilon_n}}
|\Psi(\bar\xi,v,F,\P_n)|^2
\big(\min_{i\in\mathcal{I}}\P(A^{\bar\xi}_i)\big)^\gamma\,
Q_0^K(\rmd\bar\xi)\nu(\rmd v).
\]
Considering the supremum over $F\in B$ and taking expectations, we obtain
\begin{align*}
&n\mathbb{E}\Big[\sup_{F\in B}\|\mathcal{R}_n^{\varepsilon_n}(F)\|_{\widetilde{\mathcal{H}}_\gamma}^2\Big]\\
\leq&
n \int_{\mathcal{D}\setminus\mathcal{D}_{\varepsilon_n}}
\mathbb{E}\Big[\sup_{F\in B}|\Psi(\bar\xi,v,F,\P_n)|^2\Big]
\big(\min_{i\in\mathcal{I}}\P(A^{\bar\xi}_i)\big)^\gamma\,
Q_0^K(\rmd\bar\xi)\nu(\rmd v).
\end{align*}
By Lemma~\ref{lem:uniform-integrability}, the expectation is uniformly bounded by some constant $C_M>0$, and we obtain a bound of the form
\[
n\mathbb{E}\Big[\sup_{F\in B}\|\mathcal{R}_n^{\varepsilon_n}(F)\|_{\widetilde{\mathcal{H}}_\gamma}^2\Big]
\leq
C_M\, n
\int_{\mathcal{D}\setminus\mathcal{D}_{\varepsilon_n}}
\big(\min_{i\in\mathcal{I}}\P(A^{\bar\xi}_i)\big)^\gamma\,
Q_0^K(\rmd\bar\xi)\nu(\rmd v).
\]
Since $\min_{i\in\mathcal{I}}\P(A^{\bar\xi}_i)<\varepsilon_n$ on the domain of integration, we can bound, for any $\gamma'\in(0,\gamma_{\max})$,
\begin{align*}
n \int_{\mathcal{D}\setminus\mathcal{D}_{\varepsilon_n}}
\big(\min_{i\in\mathcal{I}}\P(A^{\bar\xi}_i)\big)^\gamma\,
Q_0^K(\rmd\bar\xi)\nu(\rmd v)
&\leq
n\varepsilon_n^{\gamma+\gamma'}
\int_{\mathcal{D}}
\frac{1}{\big(\min_{i\in\mathcal{I}}\P(A^{\bar\xi}_i)\big)^{\gamma'}}\,
Q_0^K(\rmd\bar\xi)\nu(\rmd v)\\
&\leq
C n\varepsilon_n^{\gamma+\gamma'}
\end{align*}
for some constant $C>0$, where the integral on $\mathcal{D}$ is finite because $\gamma'<\gamma_{\max}$.  
Setting $\varepsilon_n=n^{-\delta}$, convergence to $0$ follows provided $(\gamma+\gamma')\delta>1$.  
By the restrictions on $\delta$ and $\gamma'$, this condition can be satisfied as soon as
\[
\frac{\gamma+\gamma_{\max}}{2}>1,
\quad\text{i.e.,}\quad \gamma>2-\gamma_{\max}.
\]
The proof of the second convergence in Equation~\eqref{eq:step1bis} is analogous (with $\P_n$ replaced by $\P$) and is omitted for brevity.  
This concludes the proof of Equation~\eqref{eq:step1}.

\textit{Proof of Equation~\eqref{eq:step2}.} We observe that, for $F$ in a bounded set $B$, $\Psi(\bar\xi,v,F,\P)$ and hence $\mathcal{T}(F)$ depend on $\P$ only through the collection $(\P[\varphi])_{\varphi\in\mathcal{G}}$, indexed by the class \begin{equation}\label{eq:def-G}
\mathcal{G}=\mathcal{G}_1\cup\mathcal{G}_2
\end{equation}
 with
\begin{align*}
\mathcal{G}_1 &= \{(x,y)\mapsto \partial_1\ell(F(x),y)\1_A(x): A\in\mathcal{R},\ F\in B\},\\
\mathcal{G}_2 &= \{(x,y)\mapsto \1_A(x): A\in\mathcal{R}\}.
\end{align*}
The collection $(\P[\varphi])_{\varphi\in\mathcal{G}}$ is uniformly bounded and can therefore be viewed as an element of $\ell^\infty(\mathcal{G})$. The same observation applies to $\Psi(\bar\xi,v,F,\P_n)$ and $\mathcal{T}_n(F)$, which can be expressed as a function of $(\P_n[\varphi])_{\varphi\in\mathcal{G}}$ viewed as an element of $\ell^\infty(\mathcal{G})$.

By Lemma~\ref{lem:Donsker}$(ii)$, the class $\mathcal{G}$ is Donsker so that the normalized empirical process
\[
\mathbb{G}_nf =\sqrt{n}(\P_n[f]-\P[f]),\quad f\in\mathcal{G},
\]
converges in outer distribution in  $\ell^\infty(\mathcal{G})$ to a tight $\P$-Brownian bridge denoted by $\mathbb{G}_{\P}$. We then appeal to 
the non-measurable version of Skorokhod representation, for which we refer to  \citet{vW96}, abbreviated vdVW96 below. By Theorem~1.10.4 in vdVW96, we may assume without loss of generality that the empirical processes $\P_n$, $n\geq 1$, the population measure $\P$, and the Brownian bridge $\mathbb{G}_\P$, viewed as mappings $\Omega \to \ell^\infty(\mathcal{G})$, are such that:
\[
\mathbb{G}_n=\sqrt{n}(\P_n-\P)\stackrel{au}\longrightarrow \mathbb{G}_{\P} \quad \text{ in $\ell^\infty(\mathcal{G})$ \ as $n\to\infty$},
\]
where $\stackrel{au}\longrightarrow$ denotes outer almost-uniform convergence as defined in vdVW96  Definition~1.9.1. More precisely, we may consider the construction with perfect maps as in vdVW96,  Addendum~1.10.5.

Using this  Skorokhod representation, we prove that
\begin{align}
&\sqrt{n}\big(\mathcal{T}_n^{\varepsilon_n}(F)-\mathcal{T}^{\varepsilon_n}(F)\big)\nonumber\\
=&
\int_{\mathcal{D}_{\varepsilon_n}}
\sqrt{n}\big(\Psi(\bar\xi,v,F,\P_n)-\Psi(\bar\xi,v,F,\P)\big)
\1_{A_v^{\xi^1}}(\cdot)\,
Q_0^K(\rmd \bar\xi)\nu(\rmd v) \label{eq:integral-form}
\end{align}
satisfies
\begin{equation}\label{eq:cv-as-Skorohod-2}
\sqrt{n}\big(\mathcal{T}_n^{\varepsilon_n}-\mathcal{T}^{\varepsilon_n}\big)
\stackrel{a.s.}\longrightarrow \mathscr{W}
\quad\text{in } C^0_b(B,\mathcal{K}_\gamma).
\end{equation}
Let us stress that the left-hand side is measurable (as a function of the input sample) in the Polish space $C^0_b(B,\mathcal{K}_\gamma)$, so that almost sure convergence is understood here in the usual sense. Almost sure convergence under the Skorokhod representation implies convergence in distribution in the original probability space. Therefore, Equation~\eqref{eq:cv-as-Skorohod-2} implies Equation~\eqref{eq:step2}.

The remainder of the proof is devoted to the proof of Equation~\eqref{eq:cv-as-Skorohod-2}. Using Equation~\eqref{eq:integral-form} and Lemma~\ref{lem:tech-P} from the appendix, we introduce, for $F\in\mathcal{B}$,
\[
\mathscr{W}^{\varepsilon_n}(F)
=
\int_{\mathcal{D}_{\varepsilon_n}}
\sum_{i\in\mathcal{I}}
\left(
\frac{\Gamma_i\,\mathbb{G}_\P[\varphi_{A_i^{\bar\xi}}]
+\Gamma_i'\,\mathbb{G}_\P[\varphi_{A_i^{\bar\xi},F}]}
{\P(A_i^{\bar\xi})}
\right)
\1_{A_v^{\xi^1}}(\cdot)\,
Q_0^K(\rmd \bar\xi)\nu(\rmd v).
\]
Note that both the numerators and the denominators are bounded (because $\bar\xi$ is $\epsilon_n$-regular) so that $\mathscr{W}_{\varepsilon_n}(F)\in\mathcal{H}\subset \widetilde{\mathcal{H}}_\gamma$.
According to Lemma~\ref{lem:tech-P}, this definition ensures that
\[
\sqrt{n}\big(\mathcal{T}_n^{\varepsilon_n}(F)-\mathcal{T}^{\varepsilon_n}(F)\big)
-\mathscr{W}^{\varepsilon_n}(F)
=
\int_{\mathcal{D}_{\varepsilon_n}}
\sum_{i\in\mathcal{I}}
\left(
\frac{o(1)}{\P(A_i^{\bar\xi})}
\right)
\1_{A_v^{\xi^1}}(\cdot)\,
Q_0^K(\rmd \bar\xi)\nu(\rmd v),
\]
with a $o(1)$ term uniform for $F\in B$, $\bar\xi\in \mathcal{D}_{\varepsilon_n}$ and $i\in\mathcal{I}$. Therefore we deduce
\[
\sup_{F\in B} \Big\| \sqrt{n}\big(\mathcal{T}_n^{\varepsilon_n}(F)-\mathcal{T}^{\varepsilon_n}(F)\big)
-\mathscr{W}^{\varepsilon_n}(F)\Big\|_{\widetilde{\mathcal{H}}_\gamma}\stackrel{a.s.}\longrightarrow 0.
\]
On the other hand, Lemma~\ref{lem:role-H-gamma} implies that
\[
\mathscr{W}(F)
=
\int_{\mathcal{D}}
\sum_{i\in\mathcal{I}}
\left(
\frac{\Gamma_i\,\mathbb{G}_\P[\varphi_{A_i^{\bar\xi}}]
+\Gamma_i'\,\mathbb{G}_\P[\varphi_{A_i^{\bar\xi},F}]}
{\P(A_i^{\bar\xi})}
\right)
\1_{A_v^{\xi^1}}(\cdot)\,
Q_0^K(\rmd \bar\xi)\nu(\rmd v)
\]
defines an element of $\widetilde{\mathcal{H}}_\gamma$. Moreover, with similar arguments as in the proof of Lemma~\ref{lem:role-H-gamma}, we obtain
\begin{align*}
\|\mathscr{W}(F)-\mathscr{W}^{\varepsilon_n}(F)\|_{\widetilde{\mathcal{H}}_\gamma}
&\leq \mathrm{card}(\mathcal{I})\, \max_{i\in\mathcal{I}} \Big\| \Gamma_i\,\mathbb{G}_\P[\varphi_{A_i^{\bar\xi}}]
+\Gamma_i'\,\mathbb{G}_\P[\varphi_{A_i^{\bar\xi},F}]\Big\|_{L^\infty(Q_0^K\otimes\nu)}\\
&\qquad \times \int_{\mathcal{D}\setminus\mathcal{D}_{\varepsilon_n}}
w_{2-\gamma}(\bar\xi)\, Q_0^K(\rmd \bar\xi)\nu(\rmd v),
\end{align*}
which converges to $0$  uniformly in $F\in B$ as $n\to\infty$. Altogether, in Equation~\eqref{eq:cv-as-Skorohod-2}, we have proven the almost sure convergence
\[
\sqrt{n}\big(\mathcal{T}_n^{\varepsilon_n}(F)-\mathcal{T}^{\varepsilon_n}(F)\big)
\stackrel{a.s.}\longrightarrow \mathscr{W}(F)
\quad\text{in } \widetilde{\mathcal{H}}_\gamma \text{ uniformly on } B.
\]
Since the left-hand side is continuous in $F$ and uniformly bounded on $B$, the convergence takes place in $C_b^0(B,\widetilde{\mathcal{H}}_\gamma)$, concluding the proof of Equation~\eqref{eq:cv-as-Skorohod-2}.
\end{proof}

\begin{proof}[Proof of Theorem~\ref{thm:clt-igb} $(ii)$]
In order to apply Corollary~\ref{cor:stochastic-perturbation} $(ii)$, it is sufficient to prove that
\begin{equation}\label{eq:final-proof-1}
\big(\sqrt{n}(F_0^n-F_0),\,\sqrt{n}(\mathcal{T}_n-\mathcal{T})\big)
\xrightarrow{d}
(\mathscr{F}_0,\mathscr{W})
\quad
\text{in }
\widetilde{\mathcal{H}}_\gamma \times C_b^0(\mathcal{H}, \widetilde{\mathcal{H}}_\gamma),
\end{equation}
and that
\begin{equation}\label{eq:final-proof-2}
\mathcal{T}\in C^1_{\widetilde{\mathcal{H}}_\gamma}(\mathcal{H},\mathcal{H}).
\end{equation}

We first comment on~\eqref{eq:final-proof-1}.  
The convergence of the second marginal,
\[
\sqrt{n}(\mathcal{T}_n-\mathcal{T})\xrightarrow{d}\mathscr{W},
\]
has just been established in the proof of point~$(i)$, using the Donsker property of the function class $\mathcal{C}_M$ and functional delta-method computations. The convergence of the first marginal,
\[
\sqrt{n}(F_0^n-F_0)\xrightarrow{d}\mathscr{F}_0,
\]
is a direct consequence of Equation~\eqref{eq:igb-initialization-normal}, together with the central limit theorem applied to $\P_n[f_0(y)]$, yielding a centered Gaussian limit $\mathscr{F}_0$ with variance $\P[f_0(y)^2]-\P[f_0(y)]^2$. Note that this convergence in distribution holds in $\mathbb{R}$ and that the embedding $\mathbb{R}\hookrightarrow \widetilde{\mathcal{H}}_\gamma$ is continuous.

Since the enlarged class $\mathcal{C}_M\cup\{f_0\}$ remains Donsker, the two convergences can be combined within the same functional delta-method argument, which yields the joint convergence stated in~\eqref{eq:final-proof-1}.

We next consider~\eqref{eq:final-proof-2}. We prove in fact that we can extend the definition of $\mathcal{T}$ into a continuously Fréchet differentiable extension   $\mathcal{T}: \widetilde{\mathcal{H}}_\gamma\to \mathcal{H}$, which clearly implies \eqref{eq:final-proof-2}. 
In Lemma~\ref{lem:RKHS-rf}$(ii)$ the definition of $\mathcal{T}(F)$ and $\Psi(\bar\xi,v,F,\P)$  holds for all $F\in L^2(\P)$, and in particular for all $F\in\widetilde{\mathcal{H}}_\gamma$ for $0<\gamma<\gamma_{\max}$, because the RKHS  contains bounded continuous functions. A straightforward adaptation of Lemma~\ref{lem:tech-F} then shows that, for all $\bar\xi$, $v$, the function $F\mapsto \Psi(\bar\xi,v,F,\P)$ is continuously Fr\'echet differentiable on $\widetilde{\mathcal{H}}_\gamma$ with a uniformly bounded gradient 
\[
\|\widetilde{\nabla}\Psi(\bar\xi,v,F,\P)\|_{\widetilde{\mathcal{H}}_\gamma} \leq \widetilde{C}_M
\]
on the ball $\|F\|_{\widetilde{\mathcal{H}}_\gamma}\leq M$. The only modification in the proof is to replace the feature map $x\mapsto \Phi(x)\in\mathcal{H}$ by the feature map $x\mapsto \widetilde{\Phi}(x)\in\widetilde{\mathcal{H}}_\gamma$ and the upper bound $\kappa$ for the kernel $k$ by the upper bound $\tilde \kappa$ for the kernel $\tilde{k}_\gamma$. 
Then, differentiating the relation
\[
\mathcal{T}(F)=\int \Psi(\bar\xi,v,F,\P)  \1_{A_v^{\xi^1}}(\cdot)\, Q_0^K(\rmd \bar\xi)\nu(\rmd v),
\]
we obtain 
\[
\rmd_F\mathcal{T}(G)=\int \langle \widetilde{\nabla}_F\Psi(\bar\xi,v,F,\P),G \rangle_{\widetilde{\mathcal{H}}_\gamma} \1_{A_v^{\xi^1}}(\cdot)\, Q_0^K(\rmd \bar\xi)\nu(\rmd v)
\]
with 
\begin{align*}
\|\rmd_F\mathcal{T}(G)\|_{\mathcal{H}}^2 &\leq  \int \langle \widetilde{\nabla}_F\Psi(\bar\xi,v,F,\P),G \rangle_{\widetilde{\mathcal{H}}_\gamma}^2 \, Q_0^K(\rmd \xi)\nu(\rmd v)\\
&\leq  \int \| \widetilde{\nabla}_F\Psi(\bar\xi,v,F,\P)\|_{\widetilde{\mathcal{H}}_\gamma}^2 \|G\|_{\widetilde{\mathcal{H}}_\gamma}^2 \, Q_0^K(\rmd \xi)\nu(\rmd v)\\
&\leq 2^d\,\widetilde{C}_M^2    \|G\|_{\widetilde{\mathcal{H}}_\gamma}^2.
\end{align*}
 The Fr\'echet differentiability of $\mathcal{T}: \widetilde{\mathcal{H}}_\gamma\to \mathcal{H}$ follows. The continuity of the differential $F\mapsto \rmd_F\mathcal{T}$ is a consequence of the continuity of $F\mapsto \widetilde{\nabla}_F\Psi(\bar\xi,v,F,\P)$ and of the dominated convergence Theorem. This concludes the proof of Equation~\eqref{eq:final-proof-2} and of part $(ii)$ of the Theorem.
\end{proof}

\appendix

\section{Technical computations}\label{app:technical-computations}
 
In two technical Lemmas, we analyze how the function
$\Psi(\bar\xi,v,F,\P)$ defined in Lemma~\ref{lem:RKHS-rf} depends on
$F$ and $\P$ respectively. We provide in a third lemma an uniform
integrability property for $\Psi$. In all of
\Cref{app:technical-computations}, we work in the setting of
infinitesimal gradient boosting, as defined in
Section~\ref{subsec:IGB-background}, and we use the notation of
Lemma~\ref{lem:RKHS-rf}.

\subsection{Dependency of $\Psi$ with respect to $F$}

\begin{lemma}\label{lem:tech-F}
  Let $\Psi,\Psi_1,\Psi_2$ be defined as
  in Equations~\eqref{eq:def-Psi}, \eqref{eq:def-Psi1}, \eqref{eq:def-Psi2}.
  Then:
  \begin{enumerate}[(i)]
  \item For all fixed $(\xi^1,v)$, the mapping
    $F \mapsto \Psi_1(\xi^1,v,F,\P)$ is Fréchet differentiable on
    $\mathcal{H}$ and its gradient norm is bounded by
    $L\kappa$, where $L$ and $\kappa$ are respectively defined in
    Assumptions~\ref{ass:loss} and~\ref{ass:kernel}.
  \item For all fixed $(\bar\xi,v)$, the mapping
    $F \mapsto \Psi(\bar\xi,v,F,\P)$ is Fréchet differentiable on
    $\mathcal{H}$ and its gradient satisfies, for all $M\geq 0$,
    \[
      \sup_{\|F\|_{\mathcal{H}}\leq M} \big\| \nabla_F
      \Psi(\bar{\xi},v,F,\P) \big\|_{\mathcal{H}} \;\leq \; C_M,
    \]
    for some constant $C_M>0$ that does not depend on $(\bar\xi,v)$.
    
    Furthermore, when $\P$ is replaced by $\P_n$, similar uniform
    bounds hold with constants that may depend on $n\geq 1$.
  \end{enumerate}
\end{lemma}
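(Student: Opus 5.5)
The plan is to compute the derivatives explicitly using the chain rule. The only nonlinear dependence on $F$ comes through linear functionals of the form $F\mapsto \P[\partial_1\ell(F(x),y)\1_A(x)]$, where $A$ is a fixed hyperrectangle. We treat these first and then assemble $\Psi_1$ and $\Psi_2$ from them.

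\emph{Step 1: the basic functional.} For a fixed hyperrectangle $A$ with $\P(A)>0$, set $N_A(F)=\P[\partial_1\ell(F(x),y)\1_A(x)]$. The map $F\mapsto\partial_1\ell(F(x),y)$ is differentiable for each $(x,y)$, with gradient $\partial_1^2\ell(F(x),y)\Phi(x)$, whose norm is at most $L\kappa$. Dominated convergence (the dominating constant $L\kappa$ is integrable) therefore gives Fréchet differentiability of $N_A$, with
\[
\nabla_F N_A=\P\big[\partial_1^2\ell(F(x),y)\1_A(x)\Phi(x)\big],\qquad \|\nabla_F N_A\|_{\mathcal{H}}\le L\kappa\,\P(A).
\]
To upgrade Gâteaux to Fréchet differentiability, we use the uniform bound $|\partial_1\ell(F(x)+G(x),y)-\partial_1\ell(F(x),y)-\partial_1^2\ell(F(x),y)G(x)|\le \kappa\|G\|_{\mathcal{H}}\,\omega(\kappa\|G\|_{\mathcal{H}})$, together with a dominated-convergence argument for the modulus $\omega$ of $\partial_1^2\ell$ at fixed $(x,y)$. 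Continuity of the gradient in $F$ follows by dominated convergence in the same way.

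\emph{Step 2: part (i).} Since $\Psi_1=N_{A_v^{\xi^1}}(F)/\P(A_v^{\xi^1})$ and the denominator does not depend on $F$, Step 1 gives $\|\nabla_F\Psi_1\|_{\mathcal{H}}\le L\kappa$ directly. The case $\P(A)=0$ is trivial: there $\Psi_1\equiv0$ by the $0/0$ convention. Replacing $\P$ by $\P_n$ leaves this argument unchanged.

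\emph{Step 3: part (ii).} The factor $\Psi_2$ is a product over $w\in\mathscr{T}_{d-1}$ of softmax ratios. Each ratio is a smooth function of the scores $\Delta(j^k_w,u^k_w;A^{\xi^1}_w)$, and every score has the form $\sum_{l}N_{A_l}(F)^2/\P(A_l)$. The derivative of such a score is $\sum_l 2\,\frac{N_{A_l}(F)}{\P(A_l)}\nabla N_{A_l}$, with norm at most $2L\kappa\sum_l|N_{A_l}(F)|$. Here we use
\[
|N_A(F)|\le \P[|\partial_1\ell(0,y)|]+L\kappa M\le C(1+M),
\]
which is finite by Assumption~\ref{ass:loss}; so score gradients are bounded uniformly in $\bar\xi$ on the ball of radius $M$. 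The softmax factor $e^{\beta\Delta^1}/(K^{-1}\sum_k e^{\beta\Delta^k})$ has partial derivatives in the $\Delta^k$ bounded by $\beta$ times the factor itself, i.e.\ by $2\beta K$, since the factor lies in $[0,K]$. Hence each factor of $\Psi_2$ is bounded by $K$ with gradient bounded by a constant depending only on $M,\beta,K,L,\kappa$. The product rule over the $2^d-1$ factors then gives $\|\nabla\Psi_2\|\le C'_M$ and $|\Psi_2|\le K^{2^d-1}$.

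Combining, $\nabla\Psi=\Psi_2\nabla\Psi_1+\Psi_1\nabla\Psi_2$. This requires a bound on $|\Psi_1|$ uniform in $(\xi^1,v)$, and that is the main obstacle: $|\Psi_1|\le |\Psi_1(0)|+L\kappa M$, but $|\Psi_1(0)|=|\P[\partial_1\ell(0,y)\1_A]|/\P(A)$ is only controlled if we bound it by $\sup_x\mathbb{E}[|\partial_1\ell(0,Y)|\mid X=x]$. That supremum is finite under \eqref{eq:cond-expectation-bounded} (via Jensen), and we will invoke that condition here. For $\P_n$, $|\Psi_1(0)|\le\max_i|\partial_1\ell(0,y_i)|$ and the score denominators are at least $1/n$, which explains why the constants may depend on $n$.
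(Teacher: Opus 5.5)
Your proposal is correct and follows essentially the same route as the paper: differentiate $F\mapsto \P[\partial_1\ell(F(x),y)\1_A(x)]$ under the integral and let the factor $\P(A)$ cancel to get the $L\kappa$ bound for $\Psi_1$, bound the score gradients by a multiple of $\P[|\partial_1\ell(F(x),y)|]$ and use the softmax structure for $\Psi_2$, then conclude with the product rule. You are more careful than the paper at the final step, where it simply asserts that $\Psi_1$ is uniformly bounded: as you note, a bound on $|\Psi_1|$ independent of $(\xi^1,v)$ genuinely needs something like \eqref{eq:cond-expectation-bounded} (the uniformity can fail under Assumption~\ref{ass:loss} alone), and this condition is in force in Theorems~\ref{thm:lln-igb} and~\ref{thm:clt-igb}, where the population version of the lemma is used.
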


\begin{proof}[Proof of Lemma~\ref{lem:tech-F}] We present the proof only for $\P$, since the argument for $\P_n$ follows the same lines and is in fact simpler, as it involves only finite sums.

Recall from Lemma~\ref{lem:RKHS-rf} that
\[
  \Psi(\bar{\xi},v,F,\P)=\Psi_1(\xi^1,v,F,\P)\,\Psi_2(\bar{\xi},F,\P).
\]
For clarity we divide the argument into four steps. During the proof,
the constant $C_M>0$ does not depend on $(\bar\xi,v)$ and may change
from line to line.

\textit{Step 1. Bounding the gradient norm of the scores.} The map
$F \mapsto F(x)$ is linear and continuous on $\mathcal{H}$, with
gradient $\Phi(x)$. By composition, for fixed $(x,y)$, the map
$F \mapsto \partial_1\ell(F(x),y)\1_A(x)$ is continuously
differentiable on $\mathcal{H}$ with gradient
$F \mapsto \partial_1^2\ell(F(x),y)\1_A(x)\Phi(x)$. This gradient has
norm bounded by
\[
\sup_{z,y}|\partial_1^2\ell(z,y)| \, \sup_x \|\Phi(x)\|_{\mathcal{H}} = L\kappa,
\]
and therefore the map
\[
F \mapsto \P\!\left[\partial_1\ell(F(x),y)\1_A(x)\right]
\]
is continuously differentiable with gradient
\[
F \mapsto \P\!\left[\partial_1^2\ell(F(x),y)\1_A(x)\Phi(x)\right],
\]
whose norm is bounded by $L\kappa$.

For any split $A=A_0\cup A_1$, the score
\[
\Delta(A_0,A_1,F,\P)=\sum_{l=0,1} \frac{\P[\partial_1\ell(F(x),y)\1_{A_l}(x)]^2}{\P(A_l)}
\]
is continuously differentiable in $F$, with gradient
\[
\nabla_F \Delta(A_0,A_1,F,\P)
=2\sum_{l=0,1}\frac{\P[\partial_1\ell(F(x),y)\1_{A_l}(x)]\;\P[\partial_1^2\ell(F(x),y)\1_{A_l}(x)\Phi(x)]}{\P(A_l)}.
\]
Thus
\[
\|\nabla_F \Delta(A_0,A_1,F,\P)\|_{\mathcal{H}}
\le 4L\kappa\,\P\!\left[|\partial_1\ell(F(x),y)|\right]\leq C_M,
\]
where the last inequality holds for some constant $C_M>0$ when $\|F\|_{\mathcal{H}}\leq M$.

\textit{Step 2. Proof of (i).} The term $\Psi_1$ is directly controlled by Step~1. For fixed $(\bar\xi,v)$,
\[
F\mapsto\Psi_1(\xi^1,v,F,\P)=\frac{\P[\partial_1\ell(F(x),y)\1_{A_v^{\xi^1}}(x)]}{\P(A_v^{\xi^1})}
\]
is Fréchet differentiable on $\mathcal{H}$, with gradient
\[
\nabla_F \Psi_1(\xi^1,v,F,\P)
= \frac{\P[\partial_1^2\ell(F(x),y)\1_{A_v^{\xi^1}}(x)\Phi(x)]}{\P(A_v^{\xi^1})},
\]
and norm bounded by
\[
\|\nabla_F \Psi_1(\xi^1,v,F,\P)\|_{\mathcal{H}} \le L\kappa.
\]

\smallskip\noindent
\textit{Step 3. Bounding the gradient norm of $\Psi_2$.} The term $\Psi_2(\bar\xi,F,\P)$ consists of a product over $v\in\mathscr{T}_{d-1}$ of $\beta$-softmax functions
\[
\mathrm{softmax}_\beta(u_1,\ldots,u_K)=\frac{e^{\beta u_1}}{\sum_{k=1}^K e^{\beta u_k}} \, \in(0,1),
\]
whose gradient has norm at most $\beta/2$. Each argument $u_k$ is a score of the form
\[
\Delta(A_{v0}^{\xi^1},A_{v1}^{\xi^1},F,\P),
\]
which is continuously differentiable by Step~1.
By the product rule, $\nabla_F \Psi_2(\bar\xi,F,\P)$ is a sum (indexed by $v\in\mathscr{T}_{d-1}$) of products involving uniformly bounded $\beta$-softmax factors and exactly one gradient term, whose norm is bounded by $C_M$ by Step~1. Hence,
\[
\|\nabla_F \Psi_2(\bar\xi,F,\P)\|_{\mathcal{H}} \le C_M,\qquad  \|F\|_{\mathcal{H}}\leq M,
\]
for a suitable constant $C_M>0$.

\textit{Step 4. Conclusion.} Finally,
\[
\Psi(\bar\xi,v,F,\P)=\Psi_1(\xi^1,v,F,\P)\,\Psi_2(\bar\xi,F,\P)
\]
is continuously differentiable with gradient
\[
\nabla_F \Psi(\bar\xi,v,F,\P)
= \Psi_2(\bar\xi,F,\P)\,\nabla_F\Psi_1(\xi^1,v,F,\P)
  + \Psi_1(\xi^1,v,F,\P)\,\nabla_F\Psi_2(\bar\xi,F,\P).
\]
Since the functions and their gradients in the right hand side are uniformly bounded for $\|F\|_{\mathcal{H}}\leq M$, the same holds for $\nabla_F \Psi$, proving the claim.
\end{proof}

\subsection{Dependency of $\Psi$ with respect to $\P$}
According to the proof of Theorem~\ref{thm:clt-igb}$(i)$, we see the empirical processes $\P_n$, $n\geq 1$, and $\P$ as elements of $\ell^\infty(\mathcal{G})$, with $\mathcal{G}$ the class of function defined in Equation~\eqref{eq:def-G}. Using Lemma~\ref{lem:Donsker} stating that $\mathcal{G}$ is Donsker together with a (non-measurable) Skorokhod representation (Theorem~1.10.4 in \cite{vW96}), we assume without loss of generality the outer almost-uniform convergence
\[
\mathbb{G}_n=\sqrt{n}(\P_n-\P)\stackrel{au}\longrightarrow \mathbb{G}_{\P} \quad \text{ in $\ell^\infty(\mathcal{G})$ \ as $n\to\infty$}.
\]
This representation makes it possible to consider the convergence as deterministic on an event of outer probability $1$ and to compute asymptotic expansions, similarly as in the functional delta-method.  

The next Lemma provides an asymptotic expansion, as $n\to\infty$, for  $\Psi(\bar\xi,v,F,\P_n)$ defined in Equation~\eqref{eq:def-Psi}. The definition involves ratios of the form
\begin{equation}\label{eq:ratios}
\frac{\P_n[\partial_1\ell(F(x),y)\1_{A}(x)]}{\P_n(A)},\quad 
\frac{\P_n[\partial_1\ell(F(x),y)\1_{A}(x)]^2}{\P_n(A)},
\end{equation}
for a collection of hyperrectangles (or cells) $A$ that we need to describe more precisely. It depends on the splitting schemes $\bar\xi=(\xi^1,\ldots,\xi^K)$ and can be indexed by $i=(w,k,l)$ in the index set
\begin{equation}\label{eq:index-set}
\mathcal{I}=\mathcal{T}_{d-1}\times [\![1,K]\!]\times \{0,1\}.
\end{equation}
The index $w$ refers to an internal node of the binary tree in the recursive construction. At this node, the cell $A_w^{\xi^1}$ is split using $K$ candidate splits $(j_w^k,u_w^k)$, $k=1,\ldots,K$, taken from the splitting schemes; each candidate split generates a left and a right daughter cell, indexed by $l=0$ and $l=1$, respectively. We denote by $(A_i^{\bar\xi})_{i\in\mathcal{I}}$ the resulting collection of cells appearing in $\Psi(\bar\xi,v,F,\P_n)$.

\begin{lemma}\label{lem:tech-P}
Assume that $\P_n$, $n\ge 1$, and $\P$ are deterministic elements of $\ell^\infty(\mathcal{G})$ such that the convergence
\begin{equation}\label{eq:lem-tech-P}
G_n[\varphi]=\sqrt{n}\,(\P_n[\varphi]-\P[\varphi]) \longrightarrow G[\varphi],
\qquad \varphi\in\mathcal{G},
\end{equation}
holds in $\ell^\infty(\mathcal{G})$ as $n\to\infty$. Then there exist functions
\[
\Gamma_i=\Gamma_i(\bar\xi,v,F,\P)
\quad\text{and}\quad
\Gamma_i'=\Gamma_i'(\bar\xi,v,F,\P),
\qquad i\in\mathcal{I},
\]
uniformly bounded for $\|F\|_{\mathcal{H}}\leq M$, such that
\begin{align*}
\sqrt{n}\big(\Psi(\bar\xi,v,F,\P_n)-\Psi(\bar\xi,v,F,\P)\big)
=
\sum_{i\in\mathcal{I}}
\left(
\frac{\Gamma_i\,G[\varphi_{A_i^{\bar\xi}}]+\Gamma_i'\,G[\varphi_{A_i^{\bar\xi},F}]}{\P(A_i^{\bar\xi})}
+\frac{o(1)}{\P(A_i^{\bar\xi})}
\right),
\end{align*}
where we use the notation
\[
\varphi_A(x,y)=\1_A(x)\quad \text{and}\quad \varphi_{A,F}(x,y)=\partial_1\ell(F(x),y)\1_A(x).
\]
Furthermore, given a sequence $\varepsilon_n\to 0$ such that $\sqrt{n}\varepsilon_n\to+\infty$, the $o(1)$ terms are uniform for $\|F\|_{\mathcal{H}}\leq M$ and $\bar\xi$ such that $\min_{i\in\mathcal{I}} \P(A_i^{\bar\xi})\ge \varepsilon_n$. 
\end{lemma}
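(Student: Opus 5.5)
The plan is a first-order (delta-method) expansion of $\Psi$, viewed as a smooth function of finitely many ``elementary'' quantities. For each $i\in\mathcal{I}$ set $p_i=\P(A_i^{\bar\xi})$, $m_i=\P[\varphi_{A_i^{\bar\xi},F}]$, and similarly $p_i^n$, $m_i^n$ for $\P_n$. Since $\Psi_1$ involves only the leaf $A_v^{\xi^1}$, and this leaf is itself one of the daughter cells $A_i^{\bar\xi}$ (or an intersection obtained along the path, which can be added to $\mathcal{I}$ without changing the argument), $\Psi$ can be written as a fixed smooth function
\[
\Psi=H\big((r_i)_{i\in\mathcal{I}},(s_i)_{i\in\mathcal{I}}\big),\qquad r_i=\frac{m_i}{p_i},\quad s_i=\frac{m_i^2}{p_i}.
\]
Here $H$ is the product of $r_{i_0}$ with the softmax factors, and each score is a sum $s_{(w,k,0)}+s_{(w,k,1)}$. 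The first step is to record that all $r_i,s_i$ (and their $\P_n$-versions on the relevant event) are bounded uniformly over $\|F\|_{\mathcal{H}}\le M$. Indeed $|m_i|\le \P[|\partial_1\ell(F(x),y)|\1_{A}]$, and by \eqref{eq:cond-expectation-bounded} together with the Lipschitz bound $|\partial_1\ell(F(x),y)-\partial_1\ell(0,y)|\le L\kappa M$, this gives $|m_i|\le C_M p_i$. Hence $|r_i|\le C_M$ and $|s_i|\le C_M^2 p_i\le C_M^2$. Since softmax is smooth with bounded derivatives, $H$ is $C^1$ with derivatives bounded on bounded sets.

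The second step linearizes each ratio. Write $p_i^n=p_i+n^{-1/2}G_n[\varphi_{A_i}]$ and $m_i^n=m_i+n^{-1/2}G_n[\varphi_{A_i,F}]$. Then
\[
\sqrt n\,(r_i^n-r_i)=\frac{G_n[\varphi_{A_i,F}]-r_i\,G_n[\varphi_{A_i}]}{p_i^n},
\]
and analogously
\[
\sqrt n\,(s_i^n-s_i)=\frac{(m_i^n+m_i)\,G_n[\varphi_{A_i,F}]-s_i\,G_n[\varphi_{A_i}]}{p_i^n}.
\]
The key uniformity estimate is $p_i^n/p_i=1+O(n^{-1/2}\|G_n\|_\infty/\varepsilon_n)\to1$ uniformly when $p_i\ge\varepsilon_n$, which uses $\sqrt n\,\varepsilon_n\to\infty$. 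Replacing $p_i^n$ by $p_i$, $m_i^n$ by $m_i$, and $G_n$ by $G$ (uniformly in $\ell^\infty(\mathcal{G})$) therefore produces errors of the form $o(1)/p_i$, with $o(1)$ uniform in $F$ and in $\bar\xi$. Note that $m_i^n-m_i=O(n^{-1/2})$ as well, and $m_i^n/p_i$ stays bounded.

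The third step is the chain rule applied to $H$. Because the increments $\sqrt n(r^n-r)$ and $\sqrt n(s^n-s)$ are bounded by $C/p_i$, a naive mean-value bound would give a remainder like $\sqrt n\,|\Delta|^2$, which is not obviously $o(1)/p_i$. To handle this, I would use the mean-value form $\sqrt n(H(z^n)-H(z))=\int_0^1 \nabla H(z+u(z^n-z))\,du\cdot\sqrt n(z^n-z)$. Uniform continuity of $\nabla H$ on bounded sets, combined with $\|z^n-z\|\le C n^{-1/2}/\varepsilon_n\to0$, lets $\nabla H$ be replaced by $\nabla H(z)$ at the cost of $o(1)\cdot C/p_i$. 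This yields the claimed expansion, with $\Gamma_i,\Gamma_i'$ given by the partial derivatives of $H$ multiplied by $-r_i$, $1$, $-s_i$, $2m_i$. All of these are bounded uniformly, since $|m_i|\le C_M$, and they do not depend on $n$.

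The main obstacle is the uniformity in $\bar\xi$ near small cells, i.e.\ controlling $p_i^n/p_i$ and the second-order remainder only through $\varepsilon_n$. This is exactly why the rate condition $\sqrt n\,\varepsilon_n\to\infty$ enters. Cells with $p_i=0$ are excluded by regularity.
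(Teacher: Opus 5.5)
Your proposal is correct and follows essentially the same delta-method route as the paper. You linearize the ratios $\P_n[\varphi_{A,F}]/\P_n(A)$ and $\P_n[\varphi_{A,F}]^2/\P_n(A)$, and your increments reproduce the paper's $\gamma_1,\gamma_2$; you then expand the smooth softmax/product map to first order, with $\sqrt{n}\,\varepsilon_n\to\infty$ controlling $\P_n(A_i)/\P(A_i)$ uniformly. Writing $\Psi$ as a single smooth $H$ of all the $(r_i,s_i)$ and using the integral mean-value form is a tidier version of the paper's Steps~2--4, and it makes explicit the second-order and cross-term control that the paper leaves to ``rearranging terms''.
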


\begin{proof}[Proof of Lemma~\ref{lem:tech-P}] For clarity we divide the proof into four steps.

\medskip\noindent
\textbf{Step 1.} We first consider asymptotic expansions for the ratios \eqref{eq:ratios} involved in the definition of $\Psi(\bar\xi,v,F,\P_n)$. 

We define
\[
\mathcal{R}_{\varepsilon_n}=\{A\in\mathcal{R}:\P(A)\geq \varepsilon_n\}
\]
and note that $\sqrt{n}\P(A)\geq \sqrt{n}\varepsilon_n$ tends to infinity uniformly for $A\in\mathcal{R}_{\varepsilon_n}$.

Equation~\eqref{eq:lem-tech-P} ensures that
\[
\P_n[\varphi]=\P[\varphi]+\frac{G[\varphi]+o(1)}{\sqrt{n}},\qquad \text{as $n\to\infty$},
\]
with the $o(1)$ term uniform in $\varphi\in\mathcal{G}$.
Considering the functions 
\[
\varphi_{A}(x,y)=\1_A(x)\quad \text{and}\quad  \varphi_{A,F}(x,y)=\partial_1\ell(F(x),y)\1_{A}(x),
\]
we deduce the asymptotic expansion 
\[
\frac{\P_n[\varphi_{A,F}]}{\P_n(A)}
=\frac{\P[\varphi_{A,F}]}{\P(A)}+\frac{\gamma_1}{\sqrt{n}\P(A)}+\frac{o(1)}{\sqrt{n}\P(A)},
\]
where 
\begin{equation}\label{eq:def-gamma1}
\gamma_1= G[\varphi_{A,F}] -\frac{\P[\varphi_{A,F}]}{\P(A)}G[\varphi_{A}],
\end{equation}
is uniformly bounded and the $o(1)$ is uniform in $\|F\|\leq M$ and $A\in\mathcal{R}_{\varepsilon_n}$.

Similarly,
\[
\frac{\P_n[\varphi_{A,F}]^2}{\P_n(A)}
=\frac{\P[\varphi_{A,F}]^2}{\P(A)}+\frac{\gamma_2}{\sqrt{n}\P(A)}+\frac{o(1)}{\sqrt{n}\P(A)},
\]
where 
\begin{equation}\label{eq:def-gamma2}
\gamma_2= 2\P[\varphi_{A,F}]\,G[\varphi_{A,F}] -\frac{\P[\varphi_{A,F}]^2}{\P(A)}G[\varphi_{A}],
\end{equation}
is uniformly bounded and the $o(1)$ is uniform in $\|F\|\leq M$ and $A\in\mathcal{R}_{\varepsilon_n}$.

\medskip\noindent
\textbf{Step 2.} We provide an asymptotic expansion for $\Psi_1(\xi^1,v,F,\P_n)$. Using Equation~\eqref{eq:def-Psi1} and the expansion from Step~1, we obtain
\begin{equation}\label{eq:expansion-Psi1}
\Psi_1(\xi^1,v,F,\P_n)=\Psi_1(\xi^1,v,F,\P)
+\frac{\gamma_1(A_v^{\xi^1},F,\P,G)}{\sqrt{n}\P(A_v^{\xi^1})}
+\frac{o(1)}{\sqrt{n}\P(A_v^{\xi^1})}.
\end{equation}
Note that the cell $A_v^{\xi^1}$ is equal to $A^{\bar\xi}_i$ for some well chosen index  $i\in \in\mathcal{I}$, see the definition~\eqref{eq:index-set}.

\medskip\noindent
\textbf{Step 3.} We provide an asymptotic expansion for $\Psi_2(\bar\xi,F,\P_n)$. From Equation~\eqref{eq:def-Psi2}, $\Psi_2(\bar\xi,F,\P_n)$ can be written as
\[
\Psi_2(\bar\xi,F,\P_n)=\psi_2\big((u_{n,i})_{i\in\mathcal{I}}\big),
\qquad
u_{n,i}=\frac{\P_n[\varphi_{A_i^{\bar\xi},F}]^2}{\P_n(A_i^{\bar\xi})}.
\]
By Step~1,
\[
u_{n,i}=u_i+\frac{\gamma_2(A_i^{\bar\xi},F,\P,G)}{\sqrt{n}\P(A_i^{\bar\xi})}
+\frac{o(1)}{\sqrt{n}\P(A_i^{\bar\xi})},
\]
with an obvious notation for $u_i$. Since $\psi_2$ is continuously differentiable with bounded partial derivatives -- it is a product of softmax functions --, we obtain
\begin{align}
\Psi_2(\bar\xi, F, \P_n)
&=\psi_2((u_{i})_{i\in\mathcal{I}})+\sum_{i\in\mathcal{I}}\partial_i \psi_2((u_{i})_{i\in\mathcal{I}})\Big(\frac{\gamma_2(A^{\bar\xi}_i,F,\P,G)}{\sqrt{n}\P(A^{\bar\xi}_i)}+\frac{o(1)}{\sqrt{n}\P(A^{\bar\xi}_i)}\Big)\nonumber\\
&=\Psi_2(\bar\xi, F, \P)+\sum_{i\in\mathcal{I}}\Big(\frac{\partial_i \psi_2((u_{i})_{i\in\mathcal{I}})\gamma_2(A^{\bar\xi}_i,F,\P,G)}{\sqrt{n}\P(A^{\bar\xi}_i)}+\frac{o(1)}{\sqrt{n}\P(A^{\bar\xi}_i)}\Big).\label{eq:expansion-Psi2}
\end{align}
The $o(1)$ terms are uniform for $\|F\|_{\mathcal{H}}\leq M$ and $\bar\xi$ such that $A_i^{\bar\xi}\in\mathcal{R}_{\varepsilon_n}$ for all $i\in\mathcal{I}$.

\medskip\noindent
\textbf{Step 4.} We conclude the proof of the Lemma. By Equation~\eqref{eq:def-Psi}, $\Psi(\bar\xi,v,F,\P_n)$ is the product of the terms analyzed in Steps~2 and~3. Multiplying Equations~\eqref{eq:expansion-Psi1} and~\eqref{eq:expansion-Psi2}, plugging the expression of $\gamma_1$ and $\gamma_2$ from Equations~\eqref{eq:def-gamma1} and~\eqref{eq:def-gamma2}, and rearranging terms, we obtain
\[
\Psi(\bar\xi,v,F,\P_n)
=
\Psi(\bar\xi,v,F,\P)
+
\sum_{i\in\mathcal{I}}
\left(
\frac{\Gamma_i\,G[\varphi_{A_i^{\bar\xi}}]+\Gamma_i'\,G[\varphi_{A_i^{\bar\xi},F}]}{\sqrt{n}\P(A_i^{\bar\xi})}
+\frac{o(1)}{\sqrt{n}\P(A_i^{\bar\xi})}
\right),
\]
for some uniformly bounded functions $\Gamma_i=\Gamma_i(\bar\xi,v,F,\P)$ and $\Gamma_i'=\Gamma_i'(\bar\xi,v,F,\P)$. The $o(1)$ terms are uniform provided $\|F\|_{\mathcal{H}}\leq M$ and $\min_{i\in\mathcal{I}}\P(A_i^{\bar\xi})\ge \varepsilon_n$.
\end{proof}

\subsection{Uniform integrability for $\Psi$}
\begin{lemma}\label{lem:uniform-integrability}
For all fixed $\bar\xi$, $v$, and $M>0$, there exists a constant $C_M>0$ that depends only on $M$ and such that
\[
\mathbb{E}\!\left[
\sup_{n\geq 1}\sup_{\|F\|_{\mathcal{H}}\leq M}
|\Psi(\bar\xi,v,F,\P_n)|^2
\right] \leq C_M.
\]
\end{lemma}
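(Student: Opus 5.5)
We first reduce to bounding $\Psi_1$. Since $\Psi=\Psi_1\Psi_2$ and $0\le\Psi_2\le K^{2^d-1}$ uniformly in $n$, $F$ and $\bar\xi$ (Proposition~\ref{prop:app-RN-splitting-scheme}), it suffices to show
\[
\mathbb{E}\Big[\sup_{n\ge1}\sup_{\|F\|_{\mathcal{H}}\le M}\Psi_1(\xi^1,v,F,\P_n)^2\Big]\le C_M .
\]
Write $A=A_v^{\xi^1}$. By Lemma~\ref{lem:tech-F}(i), applied to $\P_n$ and proved the same way, the gradient of $F\mapsto\Psi_1(\xi^1,v,F,\P_n)$ has norm at most $L\kappa$. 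More directly, $|\partial_1\ell(F(x),y)-\partial_1\ell(0,y)|\le L|F(x)|\le L\kappa M$. Either way we obtain
\[
|\Psi_1(\xi^1,v,F,\P_n)|\le L\kappa M+\frac{\P_n[|\partial_1\ell(0,y)|\1_A(x)]}{\P_n(A)},
\]
with the convention $0/0=0$. The supremum over $F$ is now gone, and it remains to control $\mathbb{E}[\sup_n Z_n^2]$, where $Z_n=\P_n[h(y)\1_A(x)]/\P_n(A)$ and $h=|\partial_1\ell(0,\cdot)|$.

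The quantity $Z_n$ is the empirical mean of $h(Y_i)$ over the sample points falling in the fixed cell $A$. Condition on the index set $\{i:X_i\in A\}$. The $Y_i$ with $X_i\in A$ are then i.i.d.\ from the law of $Y$ given $X\in A$, and $\mathbb{E}[h(Y)^2\mid X\in A]\le\sup_x\mathbb{E}[h(Y)^2\mid X=x]=:\sigma^2<\infty$ by~\eqref{eq:cond-expectation-bounded}. We may assume $\P(A)>0$, since otherwise $Z_n=0$ almost surely. Let $N_n=n\P_n(A)$ and let $(\tilde Y_k)_{k\ge1}$ be the successive labels of points landing in $A$. Then $Z_n=N_n^{-1}\sum_{k\le N_n}h(\tilde Y_k)$, so
\[
\sup_n Z_n\le \sup_{m\ge1} \frac1m\sum_{k=1}^m h(\tilde Y_k),
\]
where the right-hand side is the maximal function of the running averages of an i.i.d.\ nonnegative sequence with second moment at most $\sigma^2$.

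The step I expect to be the main obstacle is bounding the second moment of this maximal function, because the supremum over all $n$ rules out a bound via a fixed-$n$ variance. I would use Doob's $L^2$ maximal inequality for the reversed martingale $m^{-1}\sum_{k\le m}h(\tilde Y_k)$, i.e.\ the $L\log L$-free $L^2$ ergodic maximal inequality. This gives $\mathbb{E}[\sup_m(\cdot)^2]\le 4\,\mathbb{E}[h(\tilde Y_1)^2]\le4\sigma^2$. An alternative is to centre, write $\frac1m\sum(h-\mu)$ with $\mu\le\sigma$, and apply Kolmogorov/Hájek--Rényi over dyadic blocks.

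Combining the steps,
\[
\mathbb{E}\Big[\sup_n\sup_F\Psi^2\Big]\le K^{2(2^d-1)}\cdot 2\big(L^2\kappa^2M^2+4\sigma^2\big)=:C_M .
\]
This bound is independent of $(\bar\xi,v)$, as required later in the proofs of Theorem~\ref{thm:lln-igb} and Theorem~\ref{thm:clt-igb}. Measurability of the suprema follows from separability of $\mathcal{H}$ and continuity in $F$.
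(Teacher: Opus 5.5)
Your proof is correct and follows the paper's argument: you bound $\Psi_2$ by $K^{2^d-1}$, remove the supremum over $F$ via $|\partial_1\ell(F(x),y)-\partial_1\ell(0,y)|\le L\kappa M$, and control $\sup_n$ of the cell averages of $|\partial_1\ell(0,y)|$ by Doob's $L^2$ maximal inequality for a reversed martingale, obtaining the bound $4\,\P[|\partial_1\ell(0,y)|^2\1_A(x)]/\P(A)$, which is controlled by~\eqref{eq:cond-expectation-bounded}. Passing to the i.i.d.\ sequence of labels of the points falling in $A$ makes the paper's backward-martingale step precise: the running averages $m^{-1}\sum_{k\le m}h(\tilde Y_k)$ form a genuine reversed martingale, whereas $n\mapsto\P_n[h\1_A]/\P_n(A)$ itself, with random denominators and the $0/0=0$ convention, is not exactly one.
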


\begin{proof}[Proof of Lemma~\ref{lem:uniform-integrability}] 
Recall from Lemma~\ref{lem:RKHS-rf} that 
\[
\Psi(\bar\xi,v,F,\P_n)=\Psi_1(\xi^1,v,F,\P_n)\Psi_2(\bar\xi,F,\P_n)
\]
with $\Psi_1$ and $\Psi_2$ defined respectively in Equations~\eqref{eq:def-Psi1} and~\eqref{eq:def-Psi2}. The inequality
\[
|\partial_1\ell(y,F(x))|
\le|\partial_1\ell(y,0)|+L|F(x)|
\le|\partial_1\ell(y,0)|+L\kappa\|F\|_{\mathcal{H}}
\]
implies
\[
\sup_{\|F\|_{\mathcal{H}}\le M} |\Psi_1(\xi^1,v,F,\P_n)|
\le \sup_{\|F\|_{\mathcal{H}}\le M}
\frac{\P_n\big[|\partial_1\ell(y,0)|\,\1_{A_v^{\xi^1}}(x)\big]}{\P_n(A_v^{\xi^1})}
+L\kappa M.
\]
On the other hand, $\Psi_2(\bar\xi,F,\P_n)\le K^{2^d-1}$. Thus,
\[
\sup_{\|F\|_{\mathcal{H}}\le M}|\Psi(\bar\xi,v,F,\P_n)|
\le K^{2^d-1}\Big(
\frac{\P_n\big[|\partial_1\ell(y,0)|\1_{A_v^{\xi^1}}(x)\big]}{\P_n(A_v^{\xi^1})}
+L\kappa M\Big).
\]
By a backward martingale argument combined with Doob's maximal inequality (argument  taken from the proof of Theorem~2.10 in \citet{DD24}), we obtain
\[
\mathbb{E}\Bigg[\Bigg(
\sup_{n\ge1}
\frac{\P_n\big[|\partial_1\ell(y,0)|\1_{A_v^{\xi^1}}(x)\big]}{\P_n(A_v^{\xi^1})}
\Bigg)^2\Bigg]
\le 4\,\frac{\P\big[|\partial_1\ell(y,0)|^2\1_{A_v^{\xi^1}}(x)\big]}{\P(A_v^{\xi^1})},
\]
where the right hand side is uniformly bounded by Equation~\eqref{eq:cond-expectation-bounded}. This concludes the proof.  
\end{proof}

\paragraph{Acknowledgments}
  The second author acknowledges funding through the ANR project GARP
  (ANR-24-CE40-7154). The LmB receives support from the EIPHI Graduate
  School (contract ANR-17-EURE-0002)

\phantomsection

\addcontentsline{toc}{section}{References}
\bibliographystyle{apalike-url}
\bibliography{refs.bib}

\end{document}